\newcommand{\tcb}{\textcolor{blue}}
\title{On balance properties of  hypercubic  billiard   words}
 \author{N. Bédaride\footnote{
    Aix Marseille Université, CNRS,
    Centrale Marseille, I2M, UMR 7373, 13453 Marseille, France.
    Email: \texttt{nicolas.bedaride@univ-amu.fr}}\thanks{ This work was supported by the Agence Nationale de la Recherche  through the project  ``IZES''  (ANR-22-CE40-0011).} 
 \and
 V.~Berth\'e\footnote{Universit\'e  Paris Cit\'e, IRIF, CNRS, 
F-75013 Paris, France. Email: \texttt{berthe@irif.fr}}\thanks{ This work was supported by the Agence Nationale de la Recherche  through the project  ``SymDynAr''  (ANR-23-CE40-0024-01).}
 \and
A. Julien\footnote{Nord University,     Faculty of teacher education, Levanger, Norway. Email:
\texttt{antoine.julien@nord.no}}}
\newtheorem{lemma}{Lemma}
\newtheorem{theorem}[lemma]{Theorem}
\newtheorem{proposition}[lemma]{Proposition}
\newtheorem{corollary}[lemma]{Corollary}
\newtheorem{definition}[lemma]{Definition}
\theoremstyle{definition}
\newtheorem{remark}[lemma]{Remark}
\newtheorem{example}[lemma]{Example}
\newcommand{\T}{\mathbb{T}}
\newcommand{\R}{\mathbb{R}}
\newcommand{\Z}{\mathbb{Z}}
\newcommand{\cH}{\mathcal{H}}
\newcommand{\N}{\mathbb{N}}
\newcommand{\A}{\mathcal{A}}
\newcommand{\bA}{\mathbb{A}}
\newcommand{\C}{\mathcal{C}}
\newcommand{\btheta}{\boldsymbol{\theta}}
\newcommand{\balpha}{\boldsymbol{\alpha}}
\DeclareMathOperator{\Card}{Card}
\date{\today}
\begin{document}

\maketitle

\begin{abstract}
This  paper studies balance properties for billiard words.  Billiard words generalize Sturmian words by coding trajectories in hypercubic billiards.
In the setting of  aperiodic order,   they also provide the simplest examples of quasicrystals, as  tilings of the line  obtained  via      cut and project sets with a cubical canonical window.  
  By construction, the number of occurrences of each letter in a  factor  (i.e., a string of consecutive letters) of a hypercubic billiard word  only depends on the length of the factor,  up to an additive constant.  In other words, the difference of the number of occurrences of each letter in factors  of the same length is bounded.
 In contrast with the behaviour of letters, we prove the existence of words  that are not balanced in   billiard words:  the difference of the number of occurrences of such  unbalanced   factors in  longer factors  of the same length is unbounded.  The proof relies  both on topological methods inspired by tiling  cohomology and  on  arithmetic results on bounded remainder sets for toral translations.
\end{abstract}


\section{Introduction}
Given  an infinite word  $u$ over  a finite alphabet,  the function  which counts  the number of factors  of a given  length of $u$  (i.e.,  the  strings of consecutive letters that occur in $u$)    is a classical measure of disorder, which 
extends naturally to  tilings, by counting patterns  of a given  size, and to   higher-dimensional words, by  counting  e.g. the number of  rectangular factors of a  given size. 
This  has led to  striking   results, in particular in the direction of Nivat's conjecture,  which  aims to extend    the  following simple characterization of periodicity:  if  an infinite   word   over a finite alphabet     has at most $n$  factors of length $n$, then it 
is ultimately  periodic. This result, which   is known as  Morse and Hedlund's theorem \cite{MorseHedlund:40},  admits  a  simple and combinatorial proof. 
Nivat's conjecture   then states  that if a two-dimensional word  admits  at most $ m n$ rectangular configurations of size $(m,n)$, then it admits at least one direction of periodicity. There have been exciting recent  developments concerning   this  conjecture and  despite its apparent simplicity, one of the most promising approach  involves non-trivial  algebraic methods.  
The algebraic  approach developed by J. Kari and his coauthors  in  \cite{Sza:18,KariMoutot:19,KariSza:2020,KariMoutot:20,KaMou:23}  is indeed   based on the  representation of   configurations   as formal power series;   periodicity for a  configuration  is then  expressed  as   having  a   non-trivial annihilator which, through   Hilbert's  Nullstellensatz,
yields  powerful    decomposition results.       Hence,
low complexity configurations can be decomposed into a sum of finitely many periodic configurations, but with possibly infinite alphabet 
(which also yields  an  asymptotic version of   Nivat's conjecture) \cite{KariSza:2020}.

 We focus here on the  notion of balance which can  also  be considered  as a measure of disorder. This is a priori a purely combinatorial notion, and yet,
  we will see here also that the involved methods  go far beyond combinatorics. 
An infinite  $u$ over  a finite alphabet  is said to be \emph{balanced}   if  letters   occur evenly  in    the  factors  of  $u$ of the same  length. More precisely,
there exists a constant~$C$ such that for every letter  $a$ and for every pair $w,w'$ of factors of~$u$ of the same length, 
the number of occurrences of $a$ in $w$ differs from its number of occurrences in $w$ by at most $C$.  In other words, the number of occurrences of each letter only depends on the length of the word, up to an additive constant. This  notion extends to  factors by replacing occurrences of letters  by occurrences of factors.

For more on the subject, see the survey \cite{Vuillon:03}. 

Balance was first studied by Morse and Hedlund  in the form of $1$-balance (i.e., with $C=1$) for letters   in the setting of words over  a two-letter alphabet in the seminal papers \cite{MorseHedlund:38,MorseHedlund:40}; they  prove that the family   of $1$-balanced aperiodic infinite words over a two-letter alphabet  coincides with the family of   Sturmian words.    Moreover, Sturmian words   have been proved to be  balanced on their factors in \cite{FagnotVuillon:02}.
Words over a larger alphabet that  are $1$-balanced have been characterized in \cite{Hubert:00} (see also \cite{Graham:73}) and shown to be closely related to Sturmian words.

 In the multidimensional framework, balance has been considered  both for    multidimensional words
 \cite{BerTij:2002} and  for tilings  \cite{Sadun:16}.   
The  study of balance belongs to the  general domain of  aperiodic order  which    refers to the mathematical   formalization of quasicrystals (see e.g. \cite{BaakeGrimm}). For tilings and  Delone sets, balance is   closely related to the notion of   bounded distance equivalence to  lattice   \cite{Lacz:92,HayKeKo:17}. 
More generally, the notion of balance  occurs in number theory  in  connection with  Fraenkel's  conjecture \cite{Fraenkel:73,Tijdeman:2000},  in operations research, for optimal routing  and scheduling (see e.g.  \cite{AltmanGH:00,BraunerCrama:04,BraunerJost:08,Tijdeman:80}). Balance  is   also  closely related to  symbolic discrepancy, as   
investigated in \cite{Adam:03,Adam:04}.  

There is a classical way of constructing balanced sequences, and in particular Sturmian sequences, in terms of cutting sequences. 
 Sturmian sequences are  indeed codings of trajectories of billiards on a square table with an irrational direction and, by unfolding trajectories, they code whether a horizontal or a vertical side of a lattice square is hit. Billiard words  generalize Sturmian words 
 to larger alphabets by considering  hypercubic  billiards, as   studied in  \cite{Ar.Ma.Sh.Ta.94,Ar.Ma.Sh.Ta.94bis}.
Billiard words can equivalently be defined  in terms of  codings of toral translations. 
Indeed,   billiard words   on $ d+1$ letters code   translations of  the   torus $\T^d$.  Balance properties for letters  for billiard words have   been  considered in \cite{Andrieu,Andrieu-Vivion-23}; see also  \cite{BCCSY} which studies
 generalizations of billiard words with low discrepancy.

The aim of this paper is to study balance properties for factors of   hypercubic billiard words  by using two types of results.
We use on the one hand    topological properties concerning  the cohomology of tilings for cut and project sets  (see \cref{sec:cohobi}),  and, on the other hand,    results developed  in \cite{GL:15,GrepLar:16},  whose main concerns are bounded remainder sets for toral translations, with methods relying  on  Fourier analysis.  

We now state the main results of this paper.
The precise definitions concerning the subshift $X_{\btheta}$  discussed  below are given in \cref{sec:defbilliard}.

\begin{theorem}\label{theo:main}
Let $d \geq 2$.
Let $\btheta= (1,\theta_1, \ldots,\theta_d)\in {\mathbb R}^{d+1}_+$,   and let $(X_{\btheta},S)$ be the subshift defined by the  hypercubic billiard   in $\mathbb R^{d+1}$ with  direction $\btheta$. If $X_{\btheta}$ is minimal, then its language is not balanced on factors. 

\end{theorem}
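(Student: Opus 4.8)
The plan is to show that billiard words, which are balanced on letters by construction, must fail to be balanced on some factor of length at least two. The key conceptual point is that balance on factors is a much stronger condition than balance on letters, and for $d\geq 2$ the extra degrees of freedom in the billiard make it impossible to maintain simultaneous balance of all factors of length $2$ (or slightly longer).

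I'd set up the problem via the coding of the toral translation by $\btheta$ on $\T^d$. A billiard word is a symbolic coding of the orbit of a point under the rotation $R_{\balpha}\colon x\mapsto x+\balpha \pmod{\Z^d}$, where the alphabet letters correspond to a partition of $\T^d$ into regions (the ``atoms'' determined by which face of the unit cube a lifted trajectory crosses). Under this dictionary, the number of occurrences of a letter $a$ in a prefix of length $N$ is the number of visits of the orbit to the corresponding atom, and balance of letter $a$ is exactly the statement that this atom is a bounded remainder set for the translation. The atoms for single letters are indeed bounded remainder sets — this is the built-in balance asserted in the theorem's hypothesis. The occurrence of a factor $w$ of length $k$ corresponds to the orbit landing in the intersection of $k$ translated atoms, i.e.\ a smaller ``cylinder'' region $C_w\subset\T^d$, and balance on the factor $w$ is precisely the assertion that $C_w$ is itself a bounded remainder set.

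\medskip

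First I would make this correspondence precise: a factor $w$ of length $k$ is balanced in the language of $X_{\btheta}$ if and only if its associated cylinder set $C_w\subseteq \T^d$ is a bounded remainder set for the translation $R_{\balpha}$. This reduces the combinatorial claim entirely to a problem about which subsets of $\T^d$ are bounded remainder sets. Second, I would invoke the arithmetic/Fourier-analytic characterization of bounded remainder sets from \cite{GL:15,GrepLar:16}: a set of the relevant shape is a bounded remainder set only under strong geometric constraints on its boundary (roughly, its boundary must be ``rationally aligned'' with the translation lattice in a way compatible with the equidistribution of $\balpha$). The cylinder sets $C_w$ are polytopes whose faces are built from the faces of the cube atoms and their translates; for $d\geq 2$ these faces sit at directions incompatible with the bounded-remainder condition. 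Concretely, I expect to exhibit a single explicit factor $w$ of length $2$ whose cylinder $C_w$ has a boundary piece that forces the discrepancy $\#\{n<N : R_{\balpha}^n x\in C_w\} - N\,\mathrm{vol}(C_w)$ to be unbounded, using the topological/cohomological input of \cref{sec:cohobi} to identify exactly which boundary contributions survive and cannot be cancelled.

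\medskip

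The complementary topological ingredient is where \cref{sec:cohobi} enters: the bounded-remainder property for a region is governed by whether the indicator function of its boundary represents a trivial class in the relevant tiling cohomology (equivalently, whether the region differs from a fundamental-domain-like set by a coboundary). I would argue that for letters the atoms are coboundaries (hence balanced), but that the cylinder of a length-$2$ factor produces a nontrivial cohomology class when $d\geq 2$, certifying unboundedness. The main obstacle, and where the real work lies, is the selection of the witnessing factor $w$ and the verification that its cylinder genuinely fails the bounded-remainder test — i.e.\ matching the geometric boundary data of $C_w$ against the precise conditions of \cite{GL:15,GrepLar:16} and checking the nonvanishing of the associated cohomology class. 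Minimality of $X_{\btheta}$ is used to guarantee that $w$ actually occurs in the language and that the orbit is dense enough for the discrepancy estimate to be meaningful; the hypothesis $d\geq 2$ is essential because in dimension $d=1$ (the Sturmian case) all these cylinders remain intervals and stay balanced, matching the classical result of \cite{FagnotVuillon:02}.
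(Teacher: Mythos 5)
Your reduction (factor $w$ balanced $\iff$ its cell $C_w\subset\T^d$ is a bounded remainder set for $R_{\balpha}$) is correct and is exactly the paper's framework (\cref{prop:disc}, \cref{cor-balanced-asympt}, and the measurable conjugacy of \cref{lem:eta}). The gap is in what you do next: you propose to certify failure of the bounded-remainder property for an explicit length-$2$ cylinder by ``matching its boundary data against the precise conditions of \cite{GL:15,GrepLar:16}.'' But the only \emph{characterization} (necessary and sufficient conditions) available there is \cref{cor:GL}, which holds for convex polygons in $\R^2$, i.e.\ only for $d=2$. For $d>2$ the cited results give just the sufficient direction (\cref{GL:thm}: parallelepipeds spanned by vectors of $\Z\balpha+\Z^d$ \emph{are} bounded remainder sets); there is no criterion you can test a higher-dimensional polytope against to conclude it is \emph{not} a bounded remainder set. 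Likewise, your cohomological certificate --- ``the cylinder of a length-$2$ factor produces a nontrivial cohomology class when $d\geq2$'' --- is asserted rather than proved, and proving it for a specific $w$ is precisely the hard content that your outline leaves open. So as written, your argument covers (a strengthening of) the case $d=2$, which is the paper's \cref{prop:long2}, but not \cref{theo:main} for general $d$.

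The paper's actual proof of \cref{theo:main} avoids exhibiting any factor at all: it is a soft dimension count in cohomology. By \cref{prop-jul} (Forrest--Hunton--Kellendonk), $H^1_{\tt{strong}}(X_{\btheta},\mathbb R)$ is not finitely generated when $d\geq 2$; by \cref{dim-asymptot-neglig} (Kellendonk--Sadun), the asymptotically negligible part $H^1_{\tt{an}}(X_{\btheta},\mathbb R)$ has dimension $d$. Since the classes of the factor-counting cocycles $\psi_w$ generate the strong cohomology, there must exist \emph{some} $w$ whose class does not lie in $H^1_{\tt{an}}(X_{\btheta},\mathbb R)$ (modulo the length cocycle $\mathbf l$), and then \cref{cor-balanced-asympt} says $X_{\btheta}$ is not balanced on $w$. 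This is non-constructive --- it gives no control on $|w|$ --- which is exactly why the paper needs the separate Grepstad--Lev analysis for the sharper $d=2$ statement. If you want to rescue your route for all $d\geq2$, you would need either a necessity criterion for bounded remainder polytopes in $\T^d$ (not in the cited literature) or an argument reducing the general cubic case to a two-dimensional section where \cref{cor:GL} applies; neither is supplied in your proposal.
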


In the case of $d=2$, that is, for the cubic billiard case, we can be more specific. 
\begin{proposition}\label{prop:long2}
Let $\btheta= (1,\theta_1, \theta_2)\in {\mathbb R}^{3}_+$, with $1 ,\theta_1, \theta_2$ being rationally independent.
Let $(X_{\btheta}, S)$ be  the associated   billiard subshift. 
Then no factor of $X_{\btheta}$ of length  larger than or  equal  to two is  balanced.
\end{proposition}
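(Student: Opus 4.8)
The plan is to pass from the combinatorial balance statement to a geometric one about bounded remainder sets, and then to apply the Grepstad--Lev characterisation of such sets. First I would fix the coding conjugacy between $(X_{\btheta},S)$ and a translation $R_{\balpha}$ of $\T^2$ (the model recalled in \cref{sec:defbilliard}), writing $\balpha$ for the normalised direction and $U_w\subseteq\T^2$ for the cylinder associated with a factor $w$. Since $1,\theta_1,\theta_2$ are rationally independent, $R_{\balpha}$ is uniquely ergodic, so for every $x$ the number of occurrences of $w$ in the length-$N$ prefix coded by $x$ equals $\sum_{k=0}^{N-1}\mathbf 1_{U_w}(R_{\balpha}^k x)+O(1)$, the error depending only on $|w|$. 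Consequently $w$ is balanced in the language of $X_{\btheta}$ if and only if
\[
\sup_{N\ge 1,\ x\in\T^2}\Bigl|\sum_{k=0}^{N-1}\mathbf 1_{U_w}(R_{\balpha}^k x)-N\mu(U_w)\Bigr|<\infty,
\]
that is, if and only if $U_w$ is a bounded remainder set for $R_{\balpha}$. Proving the proposition thus amounts to showing that $U_w$ is \emph{not} a bounded remainder set whenever $|w|\ge 2$.

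Next I would record the geometry of the cylinders. Each single-letter atom $U_j$ is a polygon, and the balance of letters noted in the introduction says precisely that every $U_j$ is a bounded remainder set; by the characterisation of \cite{GL:15} this means each $U_j$ is $\Lambda$-equidecomposable to a parallelogram, where $\Lambda=\Z^2+\Z\balpha$. For a general factor $w=w_0\cdots w_{n-1}$ one has
\[
U_w=\bigcap_{k=0}^{n-1}R_{\balpha}^{-k}U_{w_k},
\]
so $U_w$ is again a polygon whose edges all lie in the finitely many $\Lambda$-directions spanned by the edges of the atoms, translations preserving directions. The operative tool is then the Grepstad--Lev / Grepstad--Larcher criterion \cite{GL:15,GrepLar:16}: such a polygon is a bounded remainder set if and only if a Hadwiger-type translation-scissors invariant, computed from its signed edge lengths in each $\Lambda$-direction, vanishes. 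Crucially this criterion is Diophantine-free, requiring nothing of $\theta_1,\theta_2$ beyond rational independence.

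The heart of the argument is to show that this invariant is nonzero for every admissible $w$ with $|w|\ge 2$. For a two-letter factor $w=ab$ the passage from $U_a$ to $U_{ab}=U_a\cap R_{\balpha}^{-1}U_b$ slices $U_a$ along a segment carried by an edge of $R_{\balpha}^{-1}U_b$. I would compute the invariant explicitly from the three atoms of the cubic partition and check that this new, unmatched cut edge contributes a term that the surviving boundary of $U_a$ cannot cancel, the edges of $U_a$ having already been spent in exhibiting $U_a$ as a $\Lambda$-parallelogram. This yields non-vanishing, hence non-balance, for all length-two factors. For $|w|=n>2$ the set $U_w$ arises from $n-1$ successive such cuts, and I would argue that the invariant is additive over these cuts in a way that at least one contribution survives, so that no refinement can restore vanishing.

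The main obstacle is this last, uniform step: controlling the invariant simultaneously for all lengths and all admissible factors, and in particular ruling out fortuitous cancellations among the many cut edges of a long cylinder. Making the ``one surviving contribution'' argument rigorous will require an explicit and orientation-consistent bookkeeping of the edges of the cubic atoms and of their iterated intersections, together with a form of the Grepstad--Lev invariant under which each admissible cut is manifestly irremovable. I expect the explicit three-atom geometry of the $d=2$ case, as opposed to the purely cohomological input behind \cref{theo:main}, to be exactly what makes this direct computation feasible.
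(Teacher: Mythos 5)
Your first reduction (balance on a factor $w$ $\Leftrightarrow$ the cell $U_w$ is a bounded remainder set for the toral translation) is exactly the paper's starting point, and your identification of the cells of $\mathcal P_{\btheta}\vee E_{\btheta}^{-1}\mathcal P_{\btheta}$ as the relevant polygons is also right. The proof breaks down at the criterion you invoke afterwards. There is no ``Diophantine-free'' Hadwiger-type characterization of bounded remainder sets in \cite{GL:15,GrepLar:16}, and the one you state is in fact false: the cell of the factor $22$ (the parallelogram $KBCD$ in \cref{fig-dim2}) is centrally symmetric, with its two edge directions along $f_1$ and $f_3$, so any invariant built from signed edge lengths per direction vanishes on it --- yet it is \emph{not} a bounded remainder set. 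The reason is arithmetic, not geometric: its edge vectors are $(1-\tfrac{1}{\theta_1})f_1$ and $(1-\tfrac{\theta_2}{\theta_1})f_3$, which do \emph{not} lie in the group $\Z\balpha+\Z^2$, and the actual characterization (\cref{cor:GL}, i.e.\ Theorem~3 of \cite{GL:15}) requires precisely such membership conditions for vertices, midpoints and edge vectors. This is why the paper's proof of \cref{prop:long2} must do an explicit Diophantine computation (using the rational independence of $1,\theta_1,\theta_2$) to rule out the cell of $22$, while the six non-centrally-symmetric cells are dispatched immediately by the symmetry part of \cref{cor:GL}. Your plan, by contrast, would wrongly certify $22$ as balanced, or at best leave it undecided; Kesten's one-dimensional theorem already shows that for parallelotopes it is the \emph{lengths} of the edges relative to $\Z\balpha+\Z^2$, not their directions, that decide the matter.

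The second gap is the one you flag yourself: the passage from length $2$ to all lengths $n\ge 2$. Your proposed induction on cuts, with an ``at least one contribution survives'' argument, has no mechanism to exclude cancellations, and given that your underlying invariant is not a valid criterion, no amount of bookkeeping will repair it. The paper avoids this issue entirely: by \cite[Lemma~23]{Adam:03}, for a minimal subshift, balance on factors of length $n+1$ implies balance on factors of length $n$; contrapositively, once no factor of length $2$ is balanced, no length $n\ge 2$ can be balanced either. So the whole proposition reduces to the seven explicit polygons of the length-$2$ partition, and the only delicate case is the single parallelogram, handled by the arithmetic conditions of \cref{cor:GL}. You should replace your invariant by the genuine characterization of \cite{GL:15} and import this monotonicity-of-balance lemma; with those two changes your outline becomes essentially the paper's proof.
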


As a direct  corollary of \cref{theo:main}, we get that     a minimal  billiard shift $(X_{\btheta}, S)$ in ${\mathbb R}^{d+1}$ cannot be a   primitive unimodular proper $S$-adic subshift on a $d+1$-letter alphabet by \cite[Corollary 5.5]{BCDLPP:19}. Indeed,   for  primitive unimodular proper $S$-adic subshift on a $d+1$-letter alphabet, balance on letters is equivalent with balance on factors.  Here we prove that  minimal billiard subshifts are  only balanced on letters.

We briefly describe the contents of this paper. Basic definitions are recalled in  \cref{sec:basicnotions}. \cref{sec:defbilliard}
introduces cubic billiard words. In particular, 
\cref{sec:CP}  describes billiard words in terms of cut and project sets. 
\cref{sec:coho} recalls   cohomological definitions and \cref{sec:cohobi} details cohomological properties of billiard words. The proof of 
\cref{theo:main} is given in \cref{subsec:proofmain}, and the proof of  \cref{prop:long2} is given in 
\cref{subsec:proof2}.

\section{Basic notions} \label{sec:basicnotions}
\subsection{Words, subshifts and dynamical systems} 
For the definitions of this paragraph, we refer to \cite{Pyth.02,Queffelec:2010}.
Let $\mathcal{A}$ be a finite set, called the {\em alphabet}. We assume that its cardinality $d$ satisfies $d\ge 2$. Elements of $\mathcal{A}$ are called \emph{letters}.
A word is a finite   string of letters. If $w$ is the  finite word $w=w_0\dots w_{n-1}$, then $n$ is called the  {\em length} of the word $w$ and is denoted by $|w|$. The set of all finite words over $\mathcal{A}$ is denoted by $\mathcal{A}^*$. 
If $w=w_{0}\cdots w_{n-1}$ is a word, a prefix of $w$ is any factor $w_{0}\cdots w_{j}$ with $j\le n-1$. A suffix of $w$ is any word of the form $w_{j}\cdots w_{n-1}$ with $0\le j\le n-1$.

The set   $\mathcal{A}^\mathbb{Z}$  stands for the set of biinfinite words   with values in the alphabet  $\mathcal{A} $.
A word $w=w_0\dots w_{r-1}$ is said to occur at position $m \in \Z$ in the biinfinite word $x \in \mathcal{A}^\mathbb{Z} $ if  $x_{m+i}=w_i$ for all  integers $i\in[0,r-1]$. We say that the word $w$ is a \emph{factor} of $x$. We  use the notation $x_{[k,\ell]}$ for the  factor $x_k \cdots x_{\ell}$ of $x$, with $k, \ell \in \mathbb{Z}$. The language $\mathcal L(x)$ of $x$   is the set of all words  in $\mathcal{A}^*$ which occur in $x$, and the language   $\mathcal L_{n}(x)$ of length $n$ of $x$  is the set of all  factors of $x$ of length $n$.   Let $x \in \mathcal{A}^{\mathbb{Z}}$ and let $v \in \mathcal{A}^*$.  For $w \in  \mathcal{A}^*$,  the notation $|w|_v$ stands for the number of occurrences of the word $v$ in  $w$.

We recall that $d \geq 2$ stands for  the  cardinality of $\mathcal A$.   We endow $\mathcal A$ with the discrete topology and consider the product topology on $\mathcal A^{\mathbb Z}$. This topology is compatible with the distance $d$ on $\mathcal A^{\mathbb Z}$ defined for $x\neq y$  by
$$d(x,y)=\frac{1}{d^n}\quad \text{if}\quad n=\min\{|i|\geq 0 \,: \,  x_i\neq y_i\}.$$

The shift map $S$ is the map defined on    $\mathcal{A}^\mathbb{Z}$  by $S(x)=y$ with $y_n=x_{n+1}$ for each integer $n$.

\begin{definition}\label{def-rec-lin}
The  symbolic dynamical system  associated with the  biinfinite word $x$ is  defined as the symbolic dynamical system $(X_x, S)$, where $S$ is the shift map and $X_x=\overline{\{S^n(x) \,: \,  n\in\mathbb{Z}\}}$.

\end{definition}

We have the  equivalence between $y \in X_x$ and $\mathcal L(y) \subset \mathcal L(x)$. Thus the language of $X_x$ is equal to the language of $x$. 
A  {\em subshift}  $(X,S)$   (also called shift for short) is   a closed shift-invariant of  $\mathcal{A}^\mathbb{Z}$. 
It is said  {\em minimal} if  its closed shift-invariant  subsets are trivial. In that case, all the words in $X$ have the same language.
In particular, a  symbolic dynamical system of the form  $(X_x, S)$ is a subshift.  The \emph{factor complexity}~$p_X(n)$ of a minimal shift $(X,S)$ is defined  as 
$p_X(n) = {\Card} \mathcal{L}_n(X)$.

 Let $(X,S)$ be a subshift.
Given a  word $w=w_0 \cdots w_{n-1}  \in  \mathcal{A}^*$, the  {\em prefix cylinder} $[w]$ is defined as  the set of biinfinite words  in $X$  that  have $w$ as a prefix, i.e.,
$$[w]=\{ x \in  X \,: \,  x_{[0,n-1]}= w\},$$
and for a  word $w=w_{-m+1}  \cdots w_{n-1}  \in  \mathcal{A}^*$, the {\em two-sided  cylinder} $[[w]]$ is  defined as 
$$[[w]]=\{ x \in  X \,: \,  x_{[-m,n]}= w\}.$$

 We will use the notation ${\mathbf 1}_w$       for the {\em characteristic function}  of the  prefix cylinder $[w]$  on $X$, i.e., 
 ${\mathbf 1}_w \colon X \rightarrow \{0,1\}$,  $x \mapsto 1$ if $x_{0}\cdots x_{|w-1|}= w$,  and  $x \mapsto 0$, otherwise.
 We similarly define the characteristic function  associated  with  a two-sided cylinder $[[w]]$.

 Let $(X,S)$ be a subshift with $X \subset \mathcal{A}^{\mathbb{Z}}$. 
A~probability measure~$\mu$ on~$X$ is said to be \emph{$S$-invariant} if $\mu(S^{-1} B) = \mu(B)$ for every Borel set $B \subset X$.
  An invariant probability measure  $\mu$ on $X$ is  \emph{ergodic} if any shift-invariant measurable set
has either measure $0$ or $1$.
The shift $(X,S)$ is \emph{uniquely ergodic} if there exists a unique shift-invariant probability measure on~$X$. 
In all that follows,  for a  uniquely ergodic  subshift $(X,S)$,   the notation $(X,S,\mu)$   means that $\mu$ is the  unique shift-invariant probability measure on~$X$

Let $f$ be a function  defined on the subshift  $(X,S)$ with values in ${\mathbb R}$. We will use the notation  $S_nf$ for the  {\em Birkhoff sum} defined for any $x \in X$ and $n \in {\mathbb N}$ as 
  \[
S_n f(x):=   f(x) + f\circ S(x) + \cdots + f \circ  S^{n-1}(x).
 \] Birkhoff sums are central elements  in  ergodic theory. 
 Indeed, unique ergodicity  allows one to  relate spatial means to temporal means  via  {\em Birkhoff ergodic
 theorem}, i.e., 
 for any  continuous function $ f\in  \C(X,\R)$, one has 
\begin{equation} \label{eq:birk}
\frac{1}{n} S_n f(x)=\frac{1}{n} \sum_{k=0}^{n-1}
f \circ S^k \xrightarrow[ n \rightarrow \infty]{}
\int_{X} f\, d\mu  \mbox{ for all } x \in X .
\end{equation}
Let   now  assume that $(X,S)$ is a minimal and  uniquely ergodic subshift with shift-invariant  probability measure $\mu$. 
Then,   by taking  in \cref{eq:birk}  the characteristic function of a two-sided cylinder $[[v]]$, with $v \in {\mathcal L}(X)$, one gets   the existence of the following limit  and its relation with the measure $\mu$:
 $$\lim_{n \to +\infty} \big|x_{[-n,n]}|_v/2n = \mu[v] \mbox{ for every }x \in X.$$  We  also call  this limit the \emph{frequency} of~$v$ in~$x$. 
Also  one has  $\lim_{n \to +\infty} \big|x_{[0,n-1]}|_v/n = \mu[[v]]$  for every $x \in X.$
For more on  factor frequencies  and invariant measure, see \cite[Chapter~7]{CANT}.

We will  work both with   subshifts and dynamical  systems of a more geometric nature. Classical  geometric dynamical systems under consideration in this paper are    toral translations of the form  $T_{\balpha}$, with  $\balpha= (\alpha_1,\dots,\alpha_{d}) $  being  a vector in $[0,1]^d$,   defined on the $d$-dimensional torus  $\mathbb{T}^{d} = \mathbb{R}^{d}/\mathbb{Z}^{d}$ by \[
R_{\balpha}:\  \mathbb{T}^{d} \rightarrow \mathbb{T}^{d}, \quad  {\mathbf x} \mapsto  {\mathbf x}  + (\alpha_1,\dots,\alpha_{d})  \pmod{\mathbb{Z}^{d}}.
\]

By a {\em topological dynamical system},
we mean a pair $(X, T )$ where $X$ is a compact metric space and  $ T \colon  X \rightarrow  X$
is a homeomorphism. Two topological  dynamical systems $(X_1,T_1)$ and $(X_2,T_2)$ are said to  be  {\em conjugate} if there is  a  conjugacy between them,
i.e., a homeomorphism    $ \psi: X_1 \rightarrow X_2$, such that
$ \psi \circ  T_1= T_2\circ \psi$, that is, $$
\begin{CD}
X_{1} @>T_1 >> X_{1}\\
@VV\psi V @VV\psi V\\
X _{2}@>T_2>> X_2
\end{CD}
$$

For topological reasons, one cannot have  homeomorphisms between  symbolic and geometric  dynamical systems.
In order to  dynamically convey the idea of  a subshift coding a geometric  dynamical system, we need the following notion of a conjugacy  up to sets of zero measure.
A measurable dynamical system  $(X,T, \mu)$ is such that 
$T$ is a measurable map and $\mu$ is a $T$-invariant probability measure  on $X$,  i.e.,  $\mu(T^{-1}(B) )=\mu(B)$ for every measurable set $B$ and $\mu(X)=1$.
Two dynamical systems $(X_1,T_1,\mu_1)$ and $(X_2,T_2,\mu_2)$ are \emph{measurably}  isomorphic if there are measurable subsets $Y_1 \subset  X_1$, $Y_2 \subset  X_2$
of measure $1$ and a bijection $\psi \colon Y_1 \rightarrow Y_2$ with  $\psi$ and $\psi^{-1}$ being measurable, such that 
$\psi \circ T_1(x)= T_2 \circ \psi(x)$ for every $x \in Y_1$, and $\mu_2 \circ \psi(B)= \mu_1(B)$ for every measurable set $B \subset Y_1$.

 \subsection{Balance and discrepancy}\label{subsec:balance}
There are some factors for  which  the convergence in \eqref{eq:birk} can be significantly improved. This is the situation discussed in this section in terms
 of balance and bounded discrepancy.   
A biinfinite word $x$ is said to be {\em  balanced} on the factor $v$ if there exists a constant $C>0$ such that for every pair $(u,u')$ of factors of $x$ of  the same length we have $$| |u|_v-|u'|_v|\leq C.$$

A biinfinite word $x \in  {\mathcal A}^{\Z}$ is {\em balanced on letters} if it is balanced on each letter in  $\mathcal A$. It is  {\em balanced on factors of length $n$} if it is balanced on all its factors of length $n$, and, lastly, it is {\em balanced on factors} if it is balanced on all its factors. 
Let $(X,S)$ be a minimal subshift. Similarly, it is said to be  balanced on letters  if   there exists  $x\in X$  which is  balanced on  letters,
 it is balanced on factors of length $n$ if  there exists  $x\in X$ which is  balanced on   factors of length $n$, and  it is balanced on factors if it is balanced on all its factors.
By minimality, since all  the elements of a minimal subshift have the same language, one   can replace above  ``there exists $x \in X$''  by   ``for all $x \in X$''.
By \cite[Lemma 23]{Adam:03},  if   a minimal subshift $(X,S)$ is balanced for length $n$, then it is balanced for all lengths $k$ which are smaller than or equal to  $  n$.

Let  $x \in \mathcal{A}^{\mathbb{Z}}$, and let $ w \in {\mathcal L} (x)$. One has 
$$S_n({\mathbf 1}_w)=\Card \bigl\{ i \in [0, n-1]  \,: \,  x_i \cdots x_{i+|w|-1}= w \bigr\} =  |x_{[0,n-1]}|_w .$$
The quantity  $ \sup_n |S_n({\mathbf 1}_w) - n \mu[w]|$ is called the {\em discrepancy} of $x$ with respect to $w$. Observe that $S_n({\mathbf 1}_w) - n \mu[w]=S_n({\mathbf 1}_w- {\mathbf 1})$ where ${\mathbf1}$ is the constant function.

The following result is a folklore result (see e.g. \cite{BerTij:2002}).  We recall its proof for the sake of self-containedness.

\begin{proposition}\label{prop:disc}
Let $(X,S)$ be a minimal subshift and let $x \in X$. Let $ w \in {\mathcal L} (X)$.
 The word $x$ is balanced on $w$ if and only if there exist $C\geq 0$ and $  \mu_w  \geq 0$ such that 
 $|S_n({\mathbf 1}_w) - n \mu_w| \leq C$ for all $n$.
\end{proposition}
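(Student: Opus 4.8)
The plan is to prove the two implications separately, writing $F(N) := S_N({\mathbf 1}_w)(x) = |x_{[0,N-1]}|_w$ for $N\ge 0$. The function $F$ is nondecreasing, and for all $m,n\ge 0$ the difference $F(m+n)-F(m) = \Card\{i\in[m,m+n-1] : x_i\cdots x_{i+|w|-1}=w\}$ is the number of occurrences of $w$ that \emph{start} inside the window of length $n$ at position $m$. For the genuine factor $u=x_{[m,m+n-1]}$, this window count and the factor count $|u|_w$ differ only by occurrences straddling the right boundary, so $|u|_w \le F(m+n)-F(m) \le |u|_w + (|w|-1)$. I would absorb this universally bounded correction into all constants, setting $C' := C + (|w|-1)$.

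The implication from bounded discrepancy to balance is direct. Assume $|F(N)-N\mu_w|\le C$ for all $N$. Given any factor $u$ of $x$ of length $n$, minimality guarantees an occurrence $u=x_{[m,m+n-1]}$ at some position $m\ge 0$, and then
\[
|u|_w - n\mu_w = \bigl(F(m+n)-(m+n)\mu_w\bigr) - \bigl(F(m)-m\mu_w\bigr) - \varepsilon_m, \qquad 0\le \varepsilon_m \le |w|-1,
\]
whence $\bigl||u|_w - n\mu_w\bigr| \le 2C + (|w|-1)$. Applying this to two factors $u,u'$ of equal length and using the triangle inequality gives $\bigl||u|_w - |u'|_w\bigr| \le 4C + 2(|w|-1)$, i.e. $x$ is balanced on $w$.

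For the converse, which is the substantive direction, the key observation is that balance forces \emph{quasi-additivity} of $F$. Indeed, two windows of the same length $n$ have factor counts differing by at most $C$, hence window counts differing by at most $C'$; taking one window at position $0$ and one at position $m$ yields
\[
\bigl| F(m+n) - F(m) - F(n)\bigr| \le C' \qquad \text{for all } m,n \ge 0 .
\]
I would then invoke the elementary fact that a quasi-additive nonnegative sequence grows linearly with bounded error: telescoping gives $|F(kn)-kF(n)| \le (k-1)C'$, so $\bigl|F(kn)/(kn) - F(n)/n\bigr| \le C'/n$; this makes $\bigl(F(n)/n\bigr)_n$ a Cauchy sequence with some limit $\mu_w\in[0,1]$, and letting $k\to\infty$ in the same estimate yields $|F(n)-n\mu_w|\le C'$ for every $n$. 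This is exactly the claimed bounded discrepancy, with $\mu_w\ge 0$ equal to the frequency of $w$.

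The only real care the argument requires is bookkeeping the boundary correction $|w|-1$, so as to pass freely between window counts (which the Birkhoff sums compute) and factor counts (in terms of which balance is phrased), together with the use of minimality so that every factor of $x$ occurs at a nonnegative position, ensuring the one-sided sums $S_N$, $N\ge 0$, see all factors. I expect the main obstacle to be precisely the passage from the \emph{asymptotic} existence of the frequency to the \emph{uniform} bound $|F(N)-N\mu_w|\le C'$; this is where quasi-additivity, itself a direct consequence of balance, does the essential work, replacing the mere convergence $F(N)/N\to\mu_w$ by an effective, $N$-independent estimate.
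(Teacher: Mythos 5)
Your proof is correct and takes essentially the same route as the paper's: both directions rest on the fact that balance makes the occurrence-counting function nearly additive, so a subdivision (Fekete-type) argument yields the frequency $\mu_w$ together with the uniform bound, while the converse is the triangle inequality. The differences are cosmetic --- you telescope the prefix counts $F(kn)$ where the paper compares the two subdivisions of $x_0\cdots x_{pq-1}$ into blocks of lengths $p$ and $q$ --- and your explicit bookkeeping of the straddling-occurrence correction $|w|-1$ is actually more careful than the paper's proof, which silently identifies $S_n({\mathbf 1}_w)$ with $|x_{[0,n-1]}|_w$.
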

\begin{proof}

Let $C>0$ be  such that $x$ is $C$-balanced  on  $w$. 
For every non-negative integer $p$, let $N_p$ be    such that
for every factor $v$  of length $p$ of $x$,  $|v|_w$ 
 belongs to the set $\{N,N+1,\cdots, N+C\}$.
 
We first observe that the sequence $(N_p/p)_{p \in {\mathbb N^*}}$ is a Cauchy sequence.
Indeed, set $v = x_0 \cdots x_{pq-1}$, with $p,q$ positive integers.
One has
\[
p N_q \leq |v|_w \leq p N_q + p C, \ \quad q N_p \leq |v|_w \leq q N_p + q C.
\]
We deduce that $-q C \leq q N_p -p N_q \leq p C$ and thus $-C \leq N_p - p N_q /q \leq p C / q$.
 
Let $\mu_w$ stand for $\lim_q N_q/q$. By letting $q$ tend to infinity, one then deduces that 
$-C \leq N_p -p \mu_w \leq 0$. Observe that $\mu_w$ is the frequency of $w$ in $x$. Indeed, by minimality, this is sufficient to consider   the non-negative one-sided part of $x$,
i.e., $(x_n)_{n \geq N}$.

The other direction   comes from triangular inequality.
\end{proof}
We now give  a geometric interpretation of balance on letters.

 \begin{lemma}\label{lem:dist}
 Let $(X,S)$, with $X \subset {\mathcal A}^{\mathbb Z}$ be a minimal subshift, and let $x \in X$. Let $d=\Card {\mathcal A}$.
 The  biinfinite word $x$ is  balanced  on letters if and only if there exists  a line in $\mathbb R^d$ such that   the discrete line with  set of  vertices
 $\{(|x_{[-n,0]}|_a)_{a \in \mathcal A} \,;\,  n \in \N\} \cup  \{( |x_{[0,n]}|_a)_{a \in \mathcal A}  \,;\,  n \in \N\} $, which represents geometrically  $x$, stays  within   bounded distance from this line.
 \end{lemma}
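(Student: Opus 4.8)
The plan is to read \cref{prop:disc} as a dictionary between balance on a single letter and boundedness of the associated discrepancy, and then to convert this coordinate-wise boundedness into the geometric statement about the staircase. Write $\mathcal{A}=\{a_1,\dots,a_d\}$ and, for $n\in\N$, let $v_n=(|x_{[0,n]}|_a)_{a\in\mathcal{A}}\in\N^d$ and $w_n=(|x_{[-n,0]}|_a)_{a\in\mathcal{A}}\in\N^d$ denote the forward and backward vertices appearing in the statement. Two elementary observations drive the argument. First, $|x_{[0,n]}|_a=S_{n+1}(\mathbf 1_a)(x)$, so each coordinate of $v_n$ is literally a Birkhoff sum. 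Second, since every position of a factor carries exactly one letter, $\langle v_n,\mathbf 1\rangle=n+1$, where $\mathbf 1=(1,\dots,1)$; that is, $v_n$ lies on the affine hyperplane $H_{n+1}=\{y\in\R^d:\langle y,\mathbf 1\rangle=n+1\}$. The candidate line in the forward direction will be the line $L=\R\,\boldsymbol{\mu}$ through the origin directed by the frequency vector $\boldsymbol{\mu}=(\mu_a)_{a\in\mathcal{A}}$.

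For the \emph{only if} implication, assume $x$ is balanced on letters. By \cref{prop:disc}, for each $a$ there exist $C_a\ge 0$ and $\mu_a\ge 0$ with $|S_n(\mathbf 1_a)-n\mu_a|\le C_a$ for all $n$; summing over $a$ and using $\sum_a S_n(\mathbf 1_a)=n$ forces $\sum_a\mu_a=1$, so $\boldsymbol{\mu}\neq 0$ and $L$ is a genuine line. Then $\|v_n-(n+1)\boldsymbol{\mu}\|_2\le(\sum_a C_a^2)^{1/2}$, so each forward vertex lies within a fixed distance of the point $(n+1)\boldsymbol{\mu}\in L$. For the backward vertices I would compare the two factors $x_{[-n,0]}$ and $x_{[0,n]}$ of equal length $n+1$: balance on each letter gives $|(w_n)_a-(v_n)_a|\le C_a$, whence $\|w_n-v_n\|_2$ is bounded and $w_n$ also stays within bounded distance of $L$. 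This yields the geometric condition with the explicit line $\R\,\boldsymbol{\mu}$.

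For the converse, suppose the whole vertex set stays within distance $D$ of a line $L=\{\mathbf b+t\boldsymbol{\beta}:t\in\R\}$ with $\|\boldsymbol{\beta}\|_2=1$. Write $v_n=\mathbf b+t_n\boldsymbol{\beta}+\mathbf r_n$ with $t_n=\langle v_n-\mathbf b,\boldsymbol{\beta}\rangle$ and $\|\mathbf r_n\|_2\le D$, and pair with $\mathbf 1$ to get $n+1=\langle\mathbf b,\mathbf 1\rangle+t_n\langle\boldsymbol{\beta},\mathbf 1\rangle+\langle\mathbf r_n,\mathbf 1\rangle$. The step I expect to be the main obstacle is justifying the transversality $\langle\boldsymbol{\beta},\mathbf 1\rangle\neq 0$: since $\langle v_n,\mathbf 1\rangle=n+1\to\infty$ while $\mathbf r_n$ stays bounded, the parameter $t_n$ must be unbounded, which forces $L$ not to be parallel to the hyperplanes $H_{n+1}$, i.e. $\langle\boldsymbol{\beta},\mathbf 1\rangle\neq 0$. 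Granting this, one solves $t_n=(n+1)/\langle\boldsymbol{\beta},\mathbf 1\rangle+O(1)$, and reading off the $a$-coordinate gives $(v_n)_a=(n+1)\mu_a+O(1)$ with $\mu_a:=\beta_a/\langle\boldsymbol{\beta},\mathbf 1\rangle$, the nonnegativity $\mu_a\ge 0$ following from $(v_n)_a\ge 0$. Hence $|S_{n+1}(\mathbf 1_a)-(n+1)\mu_a|$ is bounded uniformly in $n$, and \cref{prop:disc} yields balance on each letter $a$, hence on letters. I would close by noting that, by the reduction to the one-sided part observed in the proof of \cref{prop:disc}, controlling the forward vertices already suffices, so the backward part of the hypothesis is not needed for this direction.
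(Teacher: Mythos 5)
Your proof is correct and follows essentially the same route as the paper's: both implications are reduced to \cref{prop:disc}, with the candidate line directed by the frequency vector $(\mu_a)_{a\in\mathcal A}$ in the forward direction. The paper dispatches the converse in a single sentence, and your explicit treatment of it---the transversality condition $\langle\boldsymbol{\beta},\mathbf 1\rangle\neq 0$, the extraction of $\mu_a=\beta_a/\langle\boldsymbol{\beta},\mathbf 1\rangle\geq 0$ from nonnegativity of the counts, and the observation that the forward vertices alone suffice---is precisely the detail the paper leaves implicit.
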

 \begin{proof}
We assume that $x$ is  balanced  on letters. Consider the line  directed by  the  vector $(\mu_1,\ldots, \mu_d)$ in  $\mathbb R^d$, where the $\mu_i$ are given by \cref{prop:disc}. Again by \cref{prop:disc},  the abelianization vectors   $(|x_{[-n,0]}|_a)_{a \in \mathcal A}$ and  $(|x_{[0,n]} |_a)$, for $n \in {\mathbb N}$, are at $L^1$ distance at most $Cd$  from the  line.
The converse comes from  \cref{prop:disc}.
 \end{proof}

\subsection{Balance and spectral properties}\label{subsec:eigen}
We now relate  balance   with the existence  of  eigenvalues.
 Let $(X,S)$ be a  subshift.
A non-zero complex-valued continuous function $f$ in ${\mathcal C}(X,\mathbb{C})$
is an {\em eigenfunction} for $(X,S)$ if there exists
$\lambda \in  {\mathbb C} $ such that
for all $x$ in $X$,  one has $f(Sx)=\lambda f(x).$ 

The  eigenvalues corresponding to those eigenfunctions
are called the {\em continuous
eigenvalues} of $(X,S)$. If $\theta $ is such that $e^{2i \pi \theta}$ is an   eigenvalue, 
$\theta$ is said to be an {\em additive topological   eigenvalue}.  We  denote by  $E(X,S)$  the  additive  group of additive continuous eigenvalues of $(X,S)$.

We assume that $(X,S,\mu)$ is minimal and uniquely ergodic  (with $\mu$ being the unique shift-invariant  probability measure of $(X,S)$).
  Let  $I(X,S)$ be  the  additive subgroup  of ${\mathbb R}$ generated by   the measures of  cylinders. One has   by  \cite[Proposition 2.3]{BCDLPP:19}:
$$I(X,S) = \left\{ \int f d\mu \,: \, f \in \C(X,{\mathbb Z})\right\}=  \langle\mu[w]  \,: \,   w\in\mathcal{L}(X)\rangle,$$ 
with the notation $\langle\mu[w]  \,: \,   w\in\mathcal{L}(X)\rangle$ standing for the additive  group generated by the $\mu[w]$.
Moreover, one has  $E(X,S) \subset I(X,S)$ by   \cite[Proposition 11]{CortDP:16} and   \cite[Corollary 3.7]{GHH:18}.

 Balance  on letters  provides  a sufficient condition
for  the  additive group generated by  measures of one-letter cylinders to be  included in $E$      under the assumption of unique ergodicity.
We recall  indeed the   following classical   theorem from topological dynamics. Let
 $ {\mathcal C} (X,\mathbb{R})$  stand for the set of continuous functions defined on $X$ with values in ${\mathbb R}$.

\begin{theorem} [Gottshalk-Hedlund's Theorem \cite{Gott.Hed.55}] \label{thm-got}
Let $(X,S)$ be  a minimal  subshift and $f\in {\mathcal C} (X,\mathbb{R})$. 
Then, there exists $x_0\in X$ such that $(S^nf(x_0))_n$ is a bounded sequence if and only if $f$ is a coboundary, i.e.,  there exists a function $g\in   {\mathcal C}(X,\mathbb{R})$ such that $
 g\circ S-g=f.$
\end{theorem}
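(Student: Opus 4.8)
The plan is to prove the two implications separately, the forward one being elementary and the converse resting on a skew-product construction over the minimal base $(X,S)$. For the easy direction, suppose $f = g\circ S - g$ with $g \in \C(X,\R)$. Then the Birkhoff sums telescope: $S_n f(x) = \sum_{k=0}^{n-1}\bigl(g(S^{k+1}x) - g(S^k x)\bigr) = g(S^n x) - g(x)$, so that $|S_n f(x)| \le 2\|g\|_\infty$ for every $x \in X$ and every $n$; in particular the sequence $(S_n f(x_0))_n$ is bounded for any choice of $x_0$.

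For the converse, I would assume that $\sup_n |S_n f(x_0)| = M < \infty$ for some $x_0$ and pass to the skew-product homeomorphism $T$ on $X \times \R$ defined by $T(x,t) = (Sx,\, t + f(x))$, for which $T^n(x_0,0) = (S^n x_0,\, S_n f(x_0))$. By hypothesis the forward orbit of $(x_0,0)$ stays in the compact strip $X \times [-M,M]$, so its $\omega$-limit set $Y$ is a nonempty compact set invariant under $T$ in both directions. Since the projection $\pi\colon X\times\R \to X$ conjugates $T$ to $S$ and $\pi(Y)$ is closed and $S$-invariant, minimality of $(X,S)$ forces $\pi(Y)=X$. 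Choosing, via Zorn's lemma, a minimal closed $T$-invariant subset $Z \subseteq Y$, the same argument gives $\pi(Z)=X$.

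The crux is to show that $\pi|_Z$ is injective, i.e. that each fibre $Z_x = \{t : (x,t)\in Z\}$ is a single point; this is where the continuity of the coboundary will come from, and it is the main obstacle. I would exploit the vertical translations $R_c(x,t)=(x,t+c)$, which commute with $T$ and therefore carry the minimal set $Z$ to another minimal set $R_c(Z)$. The stabiliser $G = \{c \in \R : R_c(Z)=Z\}$ is a subgroup of $\R$; it is closed because $Z$ is closed, and bounded because $Z \subseteq X\times[-M,M]$ cannot coincide with a vertical translate of itself by any $|c|>2M$. Hence $G=\{0\}$, the only bounded closed subgroup of $\R$. Now if $(x,t),(x,t')\in Z$ with $t\neq t'$, then $(x,t')=R_{t'-t}(x,t)$ lies in $Z \cap R_{t'-t}(Z)$; since two minimal sets that meet must coincide, $t'-t\in G=\{0\}$, a contradiction. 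Thus the fibres are singletons.

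Consequently $\pi|_Z \colon Z \to X$ is a continuous bijection from a compact space onto a Hausdorff space, hence a homeomorphism, and its inverse has the form $x \mapsto (x, g(x))$ with $g \in \C(X,\R)$. Finally, invariance of $Z$ under $T$ gives $(Sx, g(x)+f(x)) = T(x,g(x)) \in Z$; since the fibre over $Sx$ is the single point $g(Sx)$, this yields $g(Sx) = g(x)+f(x)$, that is, $f = g\circ S - g$, completing the argument. The delicate point throughout is precisely the passage from \emph{the orbit is bounded} to \emph{there exists a continuous invariant section}: boundedness alone only produces a semicontinuous section, and it is the combination of minimality of the base with the vertical-translation symmetry that upgrades this to a genuine element of $\C(X,\R)$.
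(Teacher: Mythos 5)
Your proof is correct, but there is nothing in the paper to compare it with: the paper states Gottschalk--Hedlund's theorem as a classical result, citing the original reference \cite{Gott.Hed.55}, and gives no proof of it. Your argument is the standard (essentially the original) proof via the skew product $T(x,t)=(Sx,\,t+f(x))$ on $X\times\mathbb{R}$: a bounded orbit yields a nonempty, compact, fully $T$-invariant $\omega$-limit set; Zorn's lemma gives a minimal subset $Z$; minimality of the base forces $\pi(Z)=X$; the vertical translations $R_c$ commute with $T$, so each $R_c(Z)$ is again minimal, and since two minimal sets that meet coincide, two distinct points in the same fibre of $Z$ would place a nonzero $c$ in the stabiliser $G=\{c\,:\,R_c(Z)=Z\}$, which is a closed and bounded, hence trivial, subgroup of $\mathbb{R}$; thus $Z$ is the graph of a continuous function $g$, and $T$-invariance of that graph is exactly the coboundary equation $g\circ S-g=f$. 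Each step is justified as you state it: full invariance of the $\omega$-limit set uses that $T$ is a homeomorphism, closedness of $G$ follows from closedness of $Z$, boundedness of $G$ from $Z\subseteq X\times[-M,M]$, and the compact-to-Hausdorff argument upgrades the bijection $\pi|_Z$ to a homeomorphism, which is precisely the delicate passage from a bounded orbit to a \emph{continuous} transfer function that you flag. The telescoping computation settles the easy direction. One cosmetic remark: the paper's notation $(S^nf(x_0))_n$ in the statement means the Birkhoff sums $S_nf(x_0)$, which is how you correctly read it.
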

\begin{remark} \label{rem:int}
Let $(X,S,\mu)$ be  a minimal  and uniquely ergodic  subshift and $f\in {\mathcal C} (X,\mathbb{R})$.  If $f$ is coboundary, then 
$\int  f d\mu=0$,  as a consequence of the ergodic theorem.
\end{remark}

We thus deduce  a classical  folklore result  that allows  the reformulation of balance in spectral terms. 
\begin{proposition}\label{prop-vp}
Let $(X,S,\mu)$ be  a minimal  and uniquely ergodic subshift. Let $w$ be a  factor  of ${\mathcal L}(X)$.
If   $(X,S)$ is  balanced on $w$, then  $\mu[w]$  is an additive  topological   eigenvalue of $(X, S)$.
\end{proposition}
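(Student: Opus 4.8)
The plan is to turn the balance hypothesis into bounded Birkhoff sums, apply Gottschalk--Hedlund's theorem (\cref{thm-got}) to produce a coboundary, and then exponentiate that coboundary into an explicit continuous eigenfunction. First I would invoke \cref{prop:disc}: since $(X,S)$ is balanced on $w$, there exist $C \geq 0$ and a real number $\mu_w$ with $|S_n({\mathbf 1}_w) - n\mu_w| \leq C$ for all $n$, and the argument given in the proof of that proposition identifies $\mu_w$ with the frequency of $w$, that is, with $\mu[w]$. I would then consider the function $f := {\mathbf 1}_w - \mu[w]$, using that the prefix cylinder $[w]$ is clopen in $X$, so that ${\mathbf 1}_w$, and hence $f$, lies in $\C(X,\mathbb{R})$. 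Its Birkhoff sums satisfy $S_n f = S_n({\mathbf 1}_w) - n\mu[w]$, which is bounded by $C$ in absolute value for every $n$.

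Next I would apply \cref{thm-got}. Since $(S_n f(x_0))_n$ is bounded for some (in fact every) $x_0 \in X$, the function $f$ is a coboundary, so there exists $g \in \C(X,\mathbb{R})$ with
\[
g \circ S - g = f = {\mathbf 1}_w - \mu[w].
\]
I would then set $h := e^{2\pi i g}$, a nowhere-vanishing continuous function in $\C(X,\mathbb{C})$, and compute
\[
h \circ S = e^{2\pi i\, g \circ S} = e^{2\pi i (g + f)} = h \cdot e^{2\pi i\, {\mathbf 1}_w}\, e^{-2\pi i \mu[w]}.
\]
The crucial point is that ${\mathbf 1}_w$ takes values in $\{0,1\}$, so $e^{2\pi i\, {\mathbf 1}_w} \equiv 1$; this is exactly what makes the coboundary yield a genuine eigenfunction rather than merely a solution of a cohomological equation. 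Hence $h \circ S = e^{-2\pi i \mu[w]} h$, so $h$ is a continuous eigenfunction with continuous eigenvalue $e^{-2\pi i \mu[w]}$. This exhibits $-\mu[w]$ as an additive topological eigenvalue, and since $E(X,S)$ is an additive group, $\mu[w] \in E(X,S)$ as well, which is the claim.

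The computation itself is routine; the two places demanding care are the identification $\mu_w = \mu[w]$ (coming directly from the proof of \cref{prop:disc} together with shift-invariance of $\mu$) and the observation that ${\mathbf 1}_w$ being integer-valued collapses the spurious factor $e^{2\pi i\, {\mathbf 1}_w}$ to $1$. The latter is the single idea that converts bounded discrepancy into an eigenvalue, and it is the conceptual heart of the argument, even though it is elementary once stated. I expect no genuine obstacle here: the statement is a folklore consequence of Gottschalk--Hedlund, and the proof is essentially the standard translation between coboundaries of $\{0,1\}$-valued functions and continuous eigenvalues.
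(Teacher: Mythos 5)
Your proposal is correct and follows essentially the same route as the paper: balance gives bounded Birkhoff sums of ${\mathbf 1}_w - \mu[w]{\mathbf 1}$ (via \cref{prop:disc}), Gottschalk--Hedlund produces a continuous transfer function $g$, and exponentiating $g$ yields a continuous eigenfunction because ${\mathbf 1}_w$ is integer-valued. The only cosmetic difference is a sign convention in the coboundary equation, which you correctly absorb using the group structure of $E(X,S)$, whereas the paper's choice of sign gives the eigenvalue $\mu[w]$ directly.
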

\begin{proof}
We assume that  $(X,S)$ is  balanced on $w$. 
Let $f= {\bf 1}_{w} -   \mu [w] {\bf 1}$, where ${\bf 1}$  is the constant function defined on $X$ taking everywhere the value $1$.
 Since $(X,S)$ is  balanced on $w$,   we can apply  Theorem \ref{thm-got}: there exists $g$ in $ {\mathcal C} (X,\mathbb{R})$ such that 
$f = g - g \circ S$.
Note that $e^{2i\pi {\bf 1}_{[w]}(x)}=1$ for  any $x\in X$.
This yields
$$\exp({2i\pi g\circ S}) = \exp ({2i\pi \mu [w] })  \exp  ({ 2i \pi  g}).$$
Hence  $\exp  ^{ 2i \pi  g}$ is a continuous   eigenfunction    associated with the additive  eigenvalue $\mu [w]$.
\end{proof}

\subsection{Bounded remainder sets}\label{subsec:BRS}

So far, we have mostly  considered  symbolic  dynamical systems made of   sets of infinite words on which the shift map acts.    We now consider  their geometric counterpart in this setting,     namely toral translations. We will  see indeed in  Section \ref{sec:defbilliard} that  billiard words on $d+1$ letters are  codings of toral translations on  the $d$-dimensional torus $\mathbb{T}^{d}$ (see Theorem \ref{thm-rappel}).  We thus  provide  a further interpretation of  balance in arithmetic terms.
For more on toral translations, see e.g.  \cite{Walters}.

Let  $\balpha= (\alpha_1,\dots,\alpha_{d}) $  be  a vector in $(0,1)^d$. We  assume that $\balpha=(\alpha_1, \cdots, \alpha_d)$  is {\em  irrational}, i.e., $ \alpha_1,\dots,\alpha_{d},1$ are rationally independent. 
We consider the   toral translation $T_{\balpha}$   defined on $\mathbb{T}^{d} = \mathbb{R}^{d}/\mathbb{Z}^{d}$ by \[
R_{\balpha}:\  \mathbb{T}^{d} \rightarrow \mathbb{T}^{d}, \quad  {\mathbf x} \mapsto  {\mathbf x}  + (\alpha_1,\dots,\alpha_{d})  \pmod{\mathbb{Z}^{d}}.
\]

This  translation is minimal and uniquely ergodic by the assumption of   irrationality.  Its  unique invariant measure
is the Haar measure $\mu$ on $\mathbb{T}^{d}$. 
We first  recall a classical spectral result (see e.g. \cite{Walters}), with eigenvalues of   the   toral translation $(\mathbb{T}^{d}
,T_{\balpha})$   being  defined similarly as for subshifts (see \cref{subsec:eigen}).
\begin{lemma}\label{rot-eigenvalue}
Let  $\balpha= (\alpha_1,\dots,\alpha_{d}) $  be  a vector in $(0,1)^d$. We  assume that $\balpha=(\alpha_1, \cdots, \alpha_d)$  is {\em  irrational}. Consider the translation  $(\mathbb T^d, T_{\balpha})$. Then, its  additive group   of continuous eigenvalues is of rank $d$. It is equal to
$\langle \alpha_1, \ldots, \alpha_d\rangle:=  \alpha_1\Z+ \cdots+ \alpha_d\Z.$
 \end{lemma}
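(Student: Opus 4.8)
The plan is to diagonalize the Koopman operator of $T_{\balpha}$ using the Fourier basis of $L^2(\T^d,\mu)$, where $\mu$ is the Haar measure. First I would record that the characters $\chi_{\mathbf{n}}\colon \mathbf{x}\mapsto e^{2i\pi\langle \mathbf{n},\mathbf{x}\rangle}$, indexed by $\mathbf{n}\in\Z^d$, form an orthonormal basis of $L^2(\T^d,\mu)$ and are continuous on $\T^d$. A direct computation gives $\chi_{\mathbf{n}}\circ T_{\balpha}=e^{2i\pi\langle\mathbf{n},\balpha\rangle}\chi_{\mathbf{n}}$, so each $\chi_{\mathbf{n}}$ is a continuous eigenfunction with additive eigenvalue $\langle\mathbf{n},\balpha\rangle=n_1\alpha_1+\cdots+n_d\alpha_d$. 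This already yields the inclusion $\langle\alpha_1,\dots,\alpha_d\rangle\subseteq E(\T^d,T_{\balpha})$.

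For the reverse inclusion I would show these are the only eigenvalues. Let $f$ be a (continuous, hence $L^2$) eigenfunction with eigenvalue $\lambda$, and expand $f=\sum_{\mathbf{n}}c_{\mathbf{n}}\chi_{\mathbf{n}}$. Applying $f\circ T_{\balpha}=\lambda f$ and comparing Fourier coefficients yields $c_{\mathbf{n}}(e^{2i\pi\langle\mathbf{n},\balpha\rangle}-\lambda)=0$ for every $\mathbf{n}$. Since $f\neq 0$, some coefficient $c_{\mathbf{n}_0}$ is nonzero, whence $\lambda=e^{2i\pi\langle\mathbf{n}_0,\balpha\rangle}$. The key arithmetic step is then to invoke irrationality: if $\langle\mathbf{n},\balpha\rangle\equiv\langle\mathbf{n}_0,\balpha\rangle\pmod 1$ then $\langle\mathbf{n}-\mathbf{n}_0,\balpha\rangle\in\Z$, and rational independence of $1,\alpha_1,\dots,\alpha_d$ forces $\mathbf{n}=\mathbf{n}_0$. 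Hence $c_{\mathbf{n}}=0$ for $\mathbf{n}\neq\mathbf{n}_0$, so $f$ is a scalar multiple of the continuous character $\chi_{\mathbf{n}_0}$. In particular every eigenvalue equals $e^{2i\pi\langle\mathbf{n}_0,\balpha\rangle}$, so $E(\T^d,T_{\balpha})=\langle\alpha_1,\dots,\alpha_d\rangle$.

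Finally, for the rank statement I would argue that the map $\Z^d\to\langle\alpha_1,\dots,\alpha_d\rangle$, $\mathbf{n}\mapsto\langle\mathbf{n},\balpha\rangle$, is a group isomorphism onto its image: it is surjective by definition, and injective (after reduction modulo $\Z$) because, again by rational independence, $\langle\mathbf{n},\balpha\rangle\in\Z$ implies $\mathbf{n}=0$. Therefore the eigenvalue group is free abelian of rank $d$.

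The argument presents no serious obstacle; it is the classical computation of the spectrum of an irrational rotation. The one point deserving care, and the only place where the irrationality hypothesis is genuinely used, is the injectivity of $\mathbf{n}\mapsto\langle\mathbf{n},\balpha\rangle$ modulo $\Z$: it simultaneously guarantees that each eigenspace is one-dimensional, so that eigenfunctions are single characters and hence continuous (making measurable and continuous eigenvalues coincide), and that the eigenvalue group attains full rank $d$. The only bookkeeping subtlety is the ambient group in which additive eigenvalues are read, owing to the integer ambiguity $e^{2i\pi\theta}=e^{2i\pi(\theta+1)}$, which is harmless precisely because $1,\alpha_1,\dots,\alpha_d$ are rationally independent.
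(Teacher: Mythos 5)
Your proof is correct and is exactly the classical argument this lemma rests on: the paper itself gives no proof (it states the result as classical and cites Walters), and the standard proof behind that citation is precisely your Fourier-character computation — characters are continuous eigenfunctions, and rational independence of $1,\alpha_1,\dots,\alpha_d$ forces every eigenfunction to be a scalar multiple of a single character, pinning down the eigenvalue group and its rank. The one caveat, which you yourself flag, is the integer ambiguity: taken literally, the additive eigenvalue group is $\Z+\alpha_1\Z+\cdots+\alpha_d\Z$ (the constant eigenfunction contributes $\Z$), and both the paper's statement and your conclusion read it modulo this harmless $\Z$ summand.
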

We now    state an important definition  closely related to the notion of balance.
\begin{definition}
Let $\balpha= (\alpha_1,\dots,\alpha_{d}) $  be  an irrational  vector in $(0,1)^d$.
A bounded remainder set   for $(\mathbb T^d, T_{\balpha})$ is a  measurable set 
$A$  of  $\mathbb T^d$ for which there  exists  $C\geq 0$ such that,  for all $N\in\mathbb N$, we have
$$ | {\Card} \{ 0 \leq n \leq N  \,: \,   R_{\balpha} ^n (0) \in  A \} - N \mu(A) | \leq C.$$
\end{definition}

A bounded remainder  set  $A$ is a    set    having a   bounded discrepancy, in the sense that the difference between the number of visits  under the map $R_{\balpha}$ to this particular  set  $A$ and its expected value is bounded. The  study of bounded remainder  sets started with the work of W.~M.~Schmidt \tcb{in his series of papers} on  irregularities of distributions in the  arithmetic setting of equidistribution and  discrepancy theory. Dynamically, these are sets with bounded  ergodic deviations, i.e.,  the Birkhoff sums  $S_n f_A$  associated with 
 the function $f_A={\mathbf 1}_A -\mu[A] {\mathbf 1}$ (with ${\mathbf 1}_A$ being the characteristic function of the set $A$ and ${\mathbf 1}$
 being the constant function equal to $1$)  are uniformly   bounded with $n$. 
The notion of balance  can be reinterpreted   in symbolic terms  (i.e.,  for  words)   as the fact that the cylinders provided  by  factors are bounded remainder sets for the dynamics  of the shift, by Proposition \ref{prop:disc}.

S. Grepstad and N. Lev have given   in   \cite{GL:15}  a     particularly nice  family  of    bounded remainder sets for  irrational rotations,   as  the  parallelotopes in
   ${\mathbb  R}^d$  spanned by vectors belonging to 
  $ {\mathbb Z}\balpha+ {\mathbb Z}^d$  (for an irrational  rotation by $\balpha=(\alpha_1, \cdots, \alpha_d)$  acting on ${\mathbb T}^d$). 
  This  family generalizes  Kesten's  characterization of   the intervals  that are  bounded  remainder sets  of the  unit circle  for an  irrational rotation 
  by $\alpha$  as     the intervals of length in $\alpha{\mathbb Z}+ {\mathbb Z}$.
\begin{theorem}\label{GL:thm} \cite{HKK:17} \cite{GL:15}
Let $\balpha= (\alpha_1,\dots,\alpha_{d}) $  be  an irrational  vector in $(0,1)^d$.
Any parallelepiped in ${\mathbb R}^d$  spanned by vectors
$v_1, \cdots, v_d$  belonging to  ${\mathbb Z} \balpha+  {\mathbb Z}^d$
is a bounded remainder set for $(\mathbb T^d, T_{\balpha})$. \end{theorem}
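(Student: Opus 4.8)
The plan is to produce, for the parallelepiped $P$, a \emph{bounded} transfer function solving the cohomological equation attached to it; once this is available the bounded remainder property is immediate. Writing $f_P = {\mathbf 1}_P - \mu(P){\mathbf 1}$, I would seek $g\in L^\infty(\mathbb{T}^d)$ with
$$f_P = g - g\circ R_{\balpha}\qquad (\mu\text{-a.e.}).$$
Indeed, the Birkhoff sums then telescope, $S_N f_P = g - g\circ R_{\balpha}^N$, so that $|S_N f_P|\le 2\|g\|_\infty$ uniformly in $N$, which is exactly the bounded remainder property after the discrepancy reformulation of \cref{prop:disc}. Before constructing $g$ one may, without loss of generality, translate $P$ to be based at the origin, since bounded remainder sets are stable under translation of the set, and reduce to the case where the projection $\mathbb{R}^d\to\mathbb{T}^d$ is injective on $P$, using that bounded remainder sets are stable under disjoint unions and set differences.

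The natural way to produce $g$ is on the Fourier side. Let $M=[\,v_1\mid\cdots\mid v_d\,]$ be the matrix with columns $v_1,\dots,v_d$, so that $P=M[0,1)^d$. A direct change of variables gives, for $k\in\Z^d\setminus\{0\}$,
$$\widehat{f_P}(k)=|\det M|\prod_{j=1}^d\frac{1-e^{-2\pi i\langle v_j,k\rangle}}{2\pi i\,\langle v_j,k\rangle},$$
and the coboundary equation forces $\widehat{g}(k)=\widehat{f_P}(k)\big/\bigl(1-e^{2\pi i\langle k,\balpha\rangle}\bigr)$. The obvious danger is the small divisor $1-e^{2\pi i\langle k,\balpha\rangle}$, which is arbitrarily close to $0$ along those $k$ for which $\langle k,\balpha\rangle$ is close to $\Z$; it is precisely here that the arithmetic hypothesis on the $v_j$ enters.

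Write each spanning vector as $v_j=\lambda_j\balpha+u_j$ with $\lambda_j\in\Z$ and $u_j\in\Z^d$, which is possible exactly because $v_j\in\Z\balpha+\Z^d$. Since $\langle u_j,k\rangle\in\Z$, one has $e^{-2\pi i\langle v_j,k\rangle}=e^{-2\pi i\lambda_j\langle\balpha,k\rangle}$, and the elementary factorization $1-z^{\lambda}=(1-z)(1+z+\cdots+z^{\lambda-1})$ shows that each numerator factor is divisible by $1-e^{-2\pi i\langle\balpha,k\rangle}$. Fixing one index $j_0$ with $\lambda_{j_0}\neq 0$ (if every $\lambda_j=0$ the window is rational and the statement degenerates to a trivial case), this divisibility cancels the small divisor uniformly in $k$ and yields a pointwise bound of the shape $|\widehat g(k)|\lesssim\prod_{j}|\langle v_j,k\rangle|^{-1}$. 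This cancellation of the small divisors by the arithmetic of the window is the heart of the Grepstad--Lev theorem.

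The main obstacle is that such a pointwise bound does not by itself deliver a \emph{bounded} $g$: summing $\prod_j|\langle v_j,k\rangle|^{-1}$ over the lattice $M^{\mathsf T}\Z^d$ diverges, already in dimension one, where the relevant transfer function is the sawtooth $x\mapsto\{x\}-\tfrac12$, whose coefficients decay only like $1/k$ and are not absolutely summable yet which is plainly bounded. Establishing that $g$ lies in $L^\infty$, and not merely in $L^2$, is thus the technical core of \cite{GL:15}: one must exploit the conditional convergence and oscillation of the series, either through summation by parts adapted to the lattice $M^{\mathsf T}\Z^d$, or by writing $g$ explicitly as a finite combination of one-dimensional sawtooth functions along the directions $v_j$ and bounding each building block directly. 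The organization I would attempt is to fiber $\Z^d$ according to the value of $\langle k,\balpha\rangle$ and reduce, on each fiber, to the one-dimensional Kesten estimate; controlling the uniformity of these one-dimensional bounds, together with upgrading the almost-everywhere identity to the orbit of the single point $0$ demanded by the definition of a bounded remainder set, is where the genuine work lies. As an alternative route one may bypass the Fourier analysis and argue geometrically, realizing the rotation orbit as a cut and project set and invoking the bounded displacement results of \cite{HKK:17}.
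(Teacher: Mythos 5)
Your setup follows exactly the Fourier-analytic route that the paper attributes to \cite{GL:15}; note that the paper itself imports this theorem without proof, only describing in \cref{subsec:BRS} the two known strategies (the transfer-function/Fourier argument of \cite{GL:15} and the lattice-geometric argument of \cite{HKK:17}), so your plan is aligned with the first of these. Within that plan, the correct parts are genuinely correct: the formula for $\widehat{f_P}(k)$, the formal solution $\widehat g(k)=\widehat{f_P}(k)\big/\bigl(1-e^{2\pi i\langle k,\balpha\rangle}\bigr)$, and the cancellation of the small divisor using $v_j=\lambda_j\balpha+u_j$ together with $1-z^{\lambda}=(1-z)(1+z+\cdots+z^{\lambda-1})$ all check out. (Two small points you should make explicit: irrationality of $\balpha$ guarantees $\langle v_{j_0},k\rangle\neq 0$ whenever $\lambda_{j_0}\neq 0$ and $k\neq 0$, so the distinguished factor is well defined; and for indices with $\lambda_j=0$ the corresponding numerator factor vanishes for the relevant frequencies, so those $k$ cause no harm.)

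The genuine gap is that the proposal stops exactly where the theorem begins. The bounded remainder property amounts (via \cref{prop:disc}) to the existence of a \emph{bounded} transfer function, and that is precisely what you defer: the coefficient bound $|\widehat g(k)|\lesssim\prod_j|\langle v_j,k\rangle|^{-1}$ does not show that the series defines an element of $L^2$, let alone $L^\infty$, because the factors $\langle v_j,k\rangle$ with $j\neq j_0$ can be arbitrarily small along infinitely many $k$ and no Diophantine hypothesis on $\balpha$ is available to control them; the remedies you name (summation by parts along $M^{\mathsf T}\Z^d$, fibering over the value of $\langle k,\balpha\rangle$, reduction to one-dimensional sawtooth estimates) constitute the actual content of \cite{GL:15} and are not carried out. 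There is a second, independent gap: even granting $g\in L^\infty$ with $f_P=g-g\circ R_{\balpha}$ $\mu$-a.e., the telescoping identity $S_Nf_P(0)=g(0)-g(R_{\balpha}^N 0)$ is not legitimate, since $g$ is defined only off a null set and the orbit of $0$ is itself a null set; converting an a.e.\ coboundary into a discrepancy bound along the specific orbit of $0$ demanded by the definition requires extra regularity of the solution (e.g.\ Riemann integrability) or a separate transference argument, which you acknowledge but do not supply. Finally, the closing alternative (``invoke the bounded displacement results of \cite{HKK:17}'') is a citation of the other known proof rather than a proof. In sum, this is a correct and well-organized roadmap of the Grepstad--Lev strategy, but the analytic core establishing boundedness of the transfer function is missing, so it does not yet prove the theorem.
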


The paper  \cite{HKK:17} provides a very elegant proof of this result which is  particularly relevant in the present  context of  one-dimensional cut and project tilings (see \cref{sec:CP}).
  The proof of 
    \cite{GL:15}  is  based on the study of the Fourier transform of the transfer function (for the coboundary equation), whereas  
   the geometric approach    of \cite{HKK:17} relies on the notion of bounded distance  to a lattice: the set  of return times of a toral rotation to a bounded remainder set is proved to  have  a group structure coming from a natural higher-dimensional realization of the problem.

Furthermore, according to  \cite[Corollary 1]{GL:15}, any convex, centrally symmetric polygon in $\R^2$  with vertices belonging to   ${\mathbb Z} \balpha+  {\mathbb Z}^2$ is a bounded remainder set.  The following statement  even provides a characterization for convex polygons being bounded remainder sets. 
\begin{corollary}\label{cor:GL} \cite[Theorem 3]{GL:15}
A convex  polygon in $\R^2$ is a bounded remainder set if and only if it has a center of symmetry and for every pair of parallel edges $e, e'$, the following  two condition hold:
\begin{enumerate}
\item there exist  two points on each edge $e$ and $e'$ related by an element of $\mathbb Z\balpha+\mathbb Z^2$;
\item if the midpoints of the edges  $e$ and $e'$ are not related by an element of $\mathbb Z\balpha+\mathbb Z^2$, then the vectors $e, e'$  belong to $\mathbb Z\balpha+\mathbb Z^2$.\end{enumerate}
\end{corollary}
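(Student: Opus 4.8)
The plan is to run the Fourier-analytic scheme underlying \cite{GL:15}: since $\mathbf{1}_P$ is not continuous, Gottschalk--Hedlund (\cref{thm-got}) does not apply directly, so instead I would study the cohomological equation $\mathbf{1}_P - \mu(P)\mathbf{1} = g - g\circ T_{\balpha}$ on $\T^2$ through Fourier series. Writing $g=\sum_m \widehat{g}(m)e^{2\pi i m\cdot x}$, the equation forces $\widehat{g}(m)=\widehat{\mathbf{1}_P}(m)/(1-e^{2\pi i m\cdot\balpha})$ for $m\neq 0$, and $P$ is a bounded remainder set precisely when these coefficients assemble into an admissible (bounded, $L^2$-controlled) transfer function, since then $S_N f_P(x)=g(x)-g(T_{\balpha}^N x)$ is uniformly bounded. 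Everything thus reduces to controlling the small divisors $1-e^{2\pi i m\cdot\balpha}$, whose modulus is comparable to the distance $\|m\cdot\balpha\|$ from $m\cdot\balpha$ to $\Z$, against the size of $\widehat{\mathbf{1}_P}(m)$.

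The first ingredient is the edge decomposition of the polygon's Fourier coefficients. Green's identity gives, for $m\neq 0$,
\[
\widehat{\mathbf{1}_P}(m)=\frac{i}{2\pi|m|^2}\sum_e (m\cdot n_e)\int_e e^{-2\pi i m\cdot x}\,ds,
\]
the sum running over the edges $e$ with outward unit normal $n_e$, unit tangent $\tau_e$, length $L_e$ and midpoint $c_e$. For $m$ perpendicular to such an edge the corresponding term equals $\tfrac{i}{2\pi|m|^2}(m\cdot n_e)L_e e^{-2\pi i m\cdot c_e}$ and is of order $1/|m|$, whereas away from the normal directions each term decays like $1/|m|^2$. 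Hence the only frequencies that can threaten summability are those $m$ simultaneously near-perpendicular to some edge and near-resonant ($\|m\cdot\balpha\|$ small), and along such frequencies the behaviour of $\widehat{\mathbf{1}_P}(m)$ is governed entirely by the pair of parallel edges orthogonal to $m$.

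This suggests the necessity of central symmetry: for an edge $e$ with no parallel partner of equal length I would produce, by simultaneous Diophantine approximation (a pigeonhole/three-distance argument on $\Z^2$), infinitely many integer vectors $m$ that are nearly orthogonal to $e$ with $\|m\cdot\balpha\|$ comparably small, along which $|\widehat{\mathbf{1}_P}(m)/(1-e^{2\pi i m\cdot\balpha})|$ fails to be square-summable because the leading $1/|m|$ term is uncancelled. Only a parallel edge $e'$ with $L_{e'}=L_e$ and $n_{e'}=-n_e$ can cancel it, and requiring such a partner for every edge amounts to central symmetry of a convex polygon. Granting central symmetry, each parallel pair contributes, for $m$ orthogonal to it,
\[
\frac{i(m\cdot n_e)L_e}{2\pi|m|^2}\,e^{-2\pi i m\cdot c_e}\bigl(1-e^{2\pi i m\cdot(c_e-c_{e'})}\bigr),
\]
so the decisive object becomes the cancellation factor $1-e^{2\pi i m\cdot(c_e-c_{e'})}$, and the whole question is whether, along every near-resonant near-orthogonal sequence for the pair, this factor is as small as the divisor $1-e^{2\pi i m\cdot\balpha}$.

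Here conditions (1)--(2) enter, and matching them exactly to this cancellation is the main obstacle. I would split into two cases. If the midpoints are related, $c_e-c_{e'}=k\balpha+\ell\in\Z\balpha+\Z^2$, then $m\cdot(c_e-c_{e'})\equiv k\,(m\cdot\balpha)\pmod 1$, so the cancellation factor is of the same order as $\|m\cdot\balpha\|$ and kills the small divisor; this favourable case is covered by condition (1) and needs nothing from (2). If the midpoints are unrelated, the factor no longer tracks $\|m\cdot\balpha\|$ automatically, and one must use that the edge vector lies in $\Z\balpha+\Z^2$ (condition (2)): writing $L_e\tau_e=k'\balpha+\ell'$ forces every integer $m$ orthogonal to the pair to satisfy $m\cdot\balpha\in\tfrac1{k'}\Z$, which pins the resonances to a controlled rational progression and, together with condition (1), again yields cancellation commensurate with the divisor. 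For sufficiency I would then verify the summability of $\widehat{g}(m)$ from these case analyses (or, when the edge vectors themselves lie in $\Z\balpha+\Z^2$, decompose the centrally symmetric polygon into parallelograms with vertices in $\Z\balpha+\Z^2$ and invoke \cref{GL:thm}, using that bounded remainder sets are stable under disjoint unions and differences). The genuinely delicate point throughout is the Diophantine bookkeeping of which integer frequencies are simultaneously near-orthogonal to an edge and near-resonant, and matching the resulting convergence threshold precisely to conditions (1) and (2).
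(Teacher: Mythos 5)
First, a point of context: the paper does not prove this statement at all --- it is imported verbatim as \cite[Theorem 3]{GL:15}, so there is no internal proof to compare against. Your attempt is therefore effectively a blind reconstruction of Grepstad--Lev, and in spirit it does follow the strategy the paper attributes to \cite{GL:15} (Fourier analysis of the transfer function for the coboundary equation); your Green's-identity edge decomposition and the pair-cancellation factor $1-e^{2\pi i m\cdot(c_e-c_{e'})}$ are correct and are genuinely the right objects. But the plan has gaps that are not just bookkeeping. The most serious one is your frequency-splitting claim that \emph{only} frequencies simultaneously near-perpendicular to an edge and near-resonant can threaten summability. That claim rests on the generic decay $|\widehat{\mathbf 1}_P(m)|\asymp |m|^{-2}$ beating the small divisor, which is false without a Diophantine hypothesis on $\balpha$: for a Liouville vector there are infinitely many $m$ in generic directions with $\|m\cdot\balpha\|\le |m|^{-N}$, so $\sum_m |m|^{-4}\|m\cdot\balpha\|^{-2}$ diverges. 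Since the corollary holds for \emph{every} irrational $\balpha$, the proof must exhibit exact algebraic cancellation of $\widehat{\mathbf 1}_P(m)$ against the divisor at \emph{all} near-resonant frequencies (this is precisely what membership of edge vectors and midpoint differences in $\Z\balpha+\Z^2$ buys, as one sees for parallelograms spanned by such vectors), and cannot proceed by size estimates plus a cone analysis. As written, your scheme could at best yield the result for Diophantine $\balpha$.

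Second, your reduction of the bounded remainder property to an ``admissible (bounded, $L^2$-controlled) transfer function'' conflates two inequivalent conditions. Bounded remainder corresponds to an essentially \emph{bounded} solution $g$ of the cohomological equation; square-summability of $\widehat{g}(m)=\widehat{\mathbf 1}_P(m)/(1-e^{2\pi i m\cdot\balpha})$ gives only $g\in L^2$, and then $S_Nf_P(x)=g(x)-g(T_{\balpha}^Nx)$ is not uniformly bounded in $N$. So the sufficiency direction, carried out as ``verify the summability of $\widehat g(m)$,'' proves the wrong statement; boundedness cannot be read off from coefficient sizes. (Your fallback --- decompose the centrally symmetric polygon into parallelograms with vertices in $\Z\balpha+\Z^2$ and invoke \cref{GL:thm} together with stability of bounded remainder sets under disjoint unions and differences --- is the sound route, but it only covers the case where the edge vectors lie in the group; the case where midpoints are related while the edges are not requires an equidecomposition by translations in $\Z\balpha+\Z^2$, which you do not supply.) Finally, the equivalence has four implications and you attack only two: necessity of conditions (1) and (2) for a centrally symmetric polygon is never addressed, and the Diophantine construction of frequencies that are simultaneously near-orthogonal and near-resonant --- needed even for necessity of central symmetry --- is asserted by pigeonhole but not carried out; you flag it yourself as the main obstacle. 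As it stands this is a roadmap toward the Grepstad--Lev theorem, not a proof of it.
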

This statement will play a key role for the proof of \cref{prop:long2}.

\section{Hypercubic billiards}\label{sec:defbilliard}
\subsection{Definitions}\label{subsec:hypercube}
In this section, we discuss   billiards in the hypercube in ${\mathbb R}^{d+1}$ and  their relations with   translations on the torus $\T^d$. For more on combinatorial properties of  hypercubic billiard words, see \cite{Ar.Ma.Sh.Ta.94,Ta,Bary.95,HB:07,Bed:09}.

\medskip

{\bf Billiard trajectories  in the hypercube}
Billiard words are defined  as codings of  trajectories in hypercubic billiards when considering  elastic reflexion on the boundary: the  trajectories are  obtained by moving with constant speed along straight lines until a face of the hypercube is hit.
More formally, consider the unit hypercube $C=[0,1]^{d+1}$ in $\mathbb R^{d+1}$. A billiard ball, i.e., a point mass, moves inside $C$ with unit speed along a straight line until it reaches the boundary $\partial C$ of $C$, then  instantaneously changes direction according to the mirror law, and continues along the new line. Thus, we have a map  $T$ defined on a subset of $\partial C\times \mathbb R^{d+1}$as  follows: $T(m,\btheta)=(m',\btheta')$, with the line $(mm')$ having direction $\btheta$, and  $\btheta'$ being  the image of $\btheta$ by the reflection along the face containing $m'$. Note that the map $T$  is not defined if the line $m+\mathbb R \btheta$ is included in a face of $C$, or if this line intersects $C$ on a face of dimension strictly less that $d$. This is why we consider here a  subset of $\partial C\times \mathbb R^{d+1}$, but it is a subset of full measure.

\medskip

{\bf Billiard subshifts} Let us then label the faces of  the hypercube $C$ by $d + 1$  symbols from the finite alphabet ${\mathcal A}=\{1, \cdots, d+1\}$  such that any  two opposite faces of the hypercube are coded by the same symbol. With the orbit of a point $(m,\btheta)$, we associate the  word  with values in the alphabet ${\mathcal A}$ which is given by the sequence of faces of the billiard trajectory.  We  work with  biinfinite words  by considering  orbits in the past and in the future, since the  map  $T$ is invertible. We denote by $\phi$ the map which sends $(m,\btheta)$ to the  word which codes the billiard orbit.
Then, we have two subshifts.  The first  subshift  $(X_{(m,\btheta)},S)$ is defined as  the closure of the shift orbit of $\phi(m,\btheta)$. The second one,  denoted as $(X_{\btheta},S)$,
  is  defined as the closure of all the orbits of points in the initial direction $\btheta$. 
We will say that the direction $\btheta$ is {\em minimal} if for almost all 
$m\in\partial C$, the orbit of $(m,\btheta)$  under the action  of $T$ is dense in the boundary $\partial C$  of $C$. In this case the two subshifts   $(X_{(m,\btheta)},S)$ and  $(X_{\btheta},S)$ do coincide. From now on, we assume that  the direction $\btheta$ is minimal.

\medskip 
{\bf Unfolding trajectories}
Now a classical tool  is to  unfold  billiard trajectories.   This is the viewpoint underlying  \cref{sec:CP}.
We consider the group $G$ generated by the linear reflections through the faces of $C$ of maximal dimension. This group has
$2^{d+1}$ elements. Then we consider $2^{d+1}$ copies of the hypercube, 
 each of them being the  image of $C$ by one element of $G$. We glue them face by face if $f'=gf$, with  $f'\in gC$ and $ g\in G$. We  then obtain  a fundamental domain for the torus   $\mathbb T^{d+1}=\mathbb R^{d+1}/\mathbb Z^{d+1}$  with the linear   flow acting on it.  See \cref{fig:unfold} for an illustration. 
 
We now consider a section of this linear flow. Let $L_{\btheta}$ stand for the line in $\R^{d+1}$ directed by  the vector $\btheta$.  In other words, travelling along the  line $L_{\btheta}$,  one meets the faces of the unit hypercubes that are located at the grid defined by the set ${\mathbb Z}^{d+1}$ of integer points.  Let  $(e_1, \cdots e_{d+1})$ stand for the canonical basis of ${\mathbb R}^{d+1}$. Then  one codes the sequence of upper faces (of unit hypercubes) that are met by $L_{\btheta}$, where if the line hits a face parallel to the  face with normal vector $e_j$, we code this intersection with the letter $j$. 

 A classical result links the billiard flow in the hypercube $C$ with a translation    in the torus $\mathbb T^{d}$  (such as discussed in \cref{subsec:BRS}).
 For that  purpose, we  recall an explicit way   to obtain  a fundamental domain for the torus $\mathbb T^{d}=\mathbb R^{d}/\mathbb Z^{d}$. 
 
 Define first 
$$
F_i = \{(x_1,\dots,x_d) \in [0,1]^{d+1}\,:\, x_i = 1\} \qquad (1 \le i \le d+1)
$$
to be an \emph{upper face} of the $d+1$-dimensional unit cube,  and similarly 
\[ 
\tilde{F}_i = \{(x_1,\dots,x_d) \in [0,1]^{d+1}\,:\, x_i = 0\}  \qquad (1 \le i \le d+1)
\]
as a \emph{lower face}.

Consider the intersection of $C=[0,1]^{d+1}$ with the hyperplane $L_{\btheta}^\perp$. We obtain a polytope with parallel faces, denoted as $W_{\btheta}$.
 We
let  denote by $\pi_{\btheta}$ the orthogonal projection  along  the line  $L_{\btheta}$ onto  the  hyperplane  $L_{\btheta}^\perp$ with normal vector $\btheta$.
One has  $W_{\btheta}=\pi_{\btheta} (C)$.
We divide  $W_{\btheta}$ into $d+1$ pieces (denoted as  $W_{\btheta} ^{(i)}$, $1  \leq i \leq d+1$), which correspond to the projections of the $d+1$ lower hyperfaces of the unit hypercube, i.e., $W_{\btheta} ^{(i)}=\pi_{\btheta} (\tilde{F}_i)$ for each $i$. For $i=1,\ldots,d+1$,   we define $f_i$  as the  vector  $f_i=\pi_{\btheta}(e_i)$. 
We  then  define on  $W_{\btheta}$  the following  piecewise translation  $E_{\btheta}$ as an exchange of the  $d+1$ pieces $W_{\btheta} ^{(i)}$, as follows: 
\begin{equation} \label{eq:exchange}E_{\btheta}\colon W_{\btheta} \rightarrow W_{\btheta}, \ x \mapsto x + f_i = x + \pi_{\btheta}(e_i) \mbox{ if  } x\in  W_{\btheta} ^{(i)}= \pi_{\balpha}  (\tilde{F}_i).\end{equation}
 In fact, this is the first return map of the linear flow in direction $\btheta$.  See \cref{fig-tore} for an illustration in the case where $d=2$.
 Note that $E_{\btheta} (
\pi_{\balpha}  (\tilde{F}_i))= \pi_{\balpha}  ({F}_i)$, for  each $i$.

Now, let $\Lambda_{\btheta}$  be  the $d$-dimensional lattice
in  the  hyperplane  $L_{\btheta}^\perp$ generated by  the $d$ vectors $f_i- f_{d+1}$, for 
$i=1,\ldots,d$. We consider the  translation $T_{\btheta}$ defined on  $L_{\btheta}^\perp/\Lambda_{\btheta} $ as $ {\mathbf x} \mapsto  {\mathbf x}+f_1$.
It turns out that the   exchange of pieces $E_{\btheta}$  on 
the domain   $W_{\btheta}=\pi_{\btheta}([0,1])^{d+1}$ is   measurably isomorphic to  the   translation $T_{\btheta}$.
This is  a direct consequence  of the fact that  the set of  translates of $W_{\btheta}$ of the form   $W_{\btheta} + \sum_{i}^{d+1} \Z f_i$  tiles the hyperplane   $L_{\btheta}^\perp$
periodically.
In other words, the coding of the billiard orbit in direction $\btheta$ is equal to the 
  coding of the map $E_{\btheta}$ induced by  the partition of the domain $W_{\btheta}$ by  the  $d+1$  subsets $W_{\btheta}^{(i)}$.
  For more details, see e.g.  \cite{BCCSY}.

 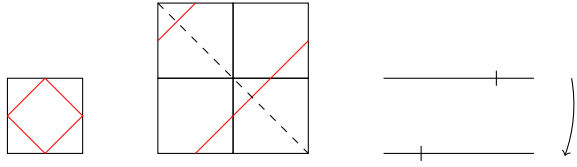
\begin{figure}
 \begin{center}
\begin{tikzpicture}
\draw (0,0)--(1,0)--(1,1)--(0,1)--cycle;
\draw[red] (.5,0)--(1,.5)--(0.5,1)--(0,0.5)--cycle;

\draw (2,0)--(3,0)--(3,1)--(2,1)--cycle;
\draw (3,0)--(4,0)--(4,1)--(3,1)--cycle;

\draw (2,1)--(3,1)--(3,2)--(2,2)--cycle;
\draw (3,1)--(4,1)--(4,2)--(3,2)--cycle;
\draw[red] (2.5,0)--(4,1.5);
\draw[red] (2,1.5)--(2.5,2);
\draw [dashed] (2,2)--(4,0);

\draw (5,0)--(7,0);
\draw (6.5,0.9)--++(0,.2);
\draw (5,1)--(7,1);
\draw (5.5,-0.1)--++(0,.2);
\draw   [->] (7.5,1) arc(10:-20:2);
\end{tikzpicture}
\caption{One billiard trajectory in the square (left). One translation on the torus defined by unfolding with respect to four squares (middle). The first return on the diagonal is an exchange of two intervals (right).} \label{fig:unfold}
\end{center}
\end{figure}


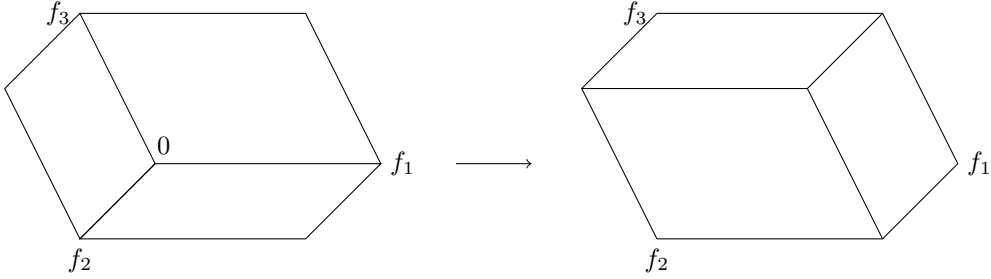
\begin{figure}
\begin{tikzpicture}
\draw (4,1) node[right]{$f_1$};
\draw (1,1) node[right,above]{$\ \ 0$};
\draw (0,3) node[left]{$f_3$};
\draw (0,0) node[right,below]{$f_2$};
\draw (0,0)--(3,0)--++(1,1)--++(-3,0)--cycle;
\draw (0,0)--(-1,2)--++(1,1)--++(1,-2)--cycle;
\draw (0,3)--++(3,0)--(4,1);

\draw[->] (5,1)--(6,1);
\end{tikzpicture}
\hspace{.5cm}
\begin{tikzpicture}
\draw (4,1) node[right]{$f_1$};
\draw (0,3) node[left]{$f_3$};
\draw (0,0) node[right,below]{$f_2$};
\draw (0,0)--(-1,2)--(0,3);
\draw (0,3)--++(3,0)--(4,1);
\draw (0,0)--(3,0)--(4,1);
\draw (3,3)--(2,2)--(-1,2);
\draw (2,2)--(3,0);
\end{tikzpicture}
\caption{The translation  on the torus $\mathbb T^2$ represented by the fundamental domain $W_{\btheta}=\pi_{\btheta}[0,1]^2$ and the domain exchange $E_{\btheta}$ acting  on it.}\label{fig-tore}
\end{figure}

We recall classical  facts about the subshift $(X_{\btheta},S)$.
\begin{theorem}\label{thm-rappel}
We consider the  direction ${\btheta}= (1,\theta_1, \ldots, \theta_d) \in \R^{d+1}_+$.
\begin{enumerate}
\item If ${\btheta}$ is an irrational direction, then, for every $x\in X_{\btheta}$, the set of  the  points $m\in \partial{C}$ such that their orbit is coded by $x$ contains  exactly one point.
\item The subshift $(X_{\btheta},S)$ is minimal if and only if $\btheta$ is an irrational direction.
\item The subshift $(X_{\btheta},S)$ is uniquely ergodic if and only if $\btheta$ is an irrational direction. 
\item The subshift  $(X_{\btheta},S)$ is measurably isomorphic  to the translation  $(T_{\balpha},\T^{d})$, with $$\balpha=\left(\frac{\theta_1}{1+\theta_1+\cdots+\theta_d},\cdots, \frac{\theta_d}{1+\theta_1+\cdots+ \theta_d}\right).$$
\item Letters are balanced in $X_{\btheta}$. The frequencies of letters    are given by   $ \mu[1]=\frac{ 1}{ 1 + \sum_j \theta_j}$, and  $ \mu[i]=\frac{ \theta_i}{ 1 + \sum_j \theta_j}$ for all $i=1, \ldots, d$.
\item   Assume that  ${\btheta}$ is an irrational direction. When $d=2$, there are  7 factors of length $2$ in ${\mathcal L}(X_{\btheta})$.
\end{enumerate}
\end{theorem}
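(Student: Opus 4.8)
The plan is to treat items (1)--(5) as the classical consequences of the unfolding construction recalled above, together with standard facts on minimal uniquely ergodic translations (as in the references already cited in the text), and to concentrate on the combinatorial heart of the statement, item (6). By the domain-exchange description of $(X_\btheta,S)$ --- the coding of $E_\btheta$ by the partition $\{W_\btheta^{(i)}\}$ generates the language of $X_\btheta$ --- a two-letter word $ab$ lies in $\mathcal L(X_\btheta)$ if and only if the set $W_\btheta^{(a)}\cap E_\btheta^{-1}(W_\btheta^{(b)})$ has nonempty interior. Since $E_\btheta$ acts on $W_\btheta^{(a)}$ as translation by $f_a$, so that $E_\btheta(W_\btheta^{(a)})=\pi_\btheta(F_a)=W_\btheta^{(a)}+f_a$, this is equivalent to asking that $(W_\btheta^{(a)}+f_a)\cap W_\btheta^{(b)}$ have nonempty interior. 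Counting length-two factors thus amounts to deciding, for each of the nine ordered pairs $(a,b)\in\{1,2,3\}^2$, whether these two translated pieces overlap in a set of positive Lebesgue measure; that a word is a factor exactly when the corresponding region has positive measure uses minimality and unique ergodicity of $(X_\btheta,S)$.

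First I would record the geometry of the pieces. Writing $f_i=\pi_\btheta(e_i)$, the face $\tilde F_i$ has vertices $0,e_j,e_k,e_j+e_k$ with $\{i,j,k\}=\{1,2,3\}$, so $W_\btheta^{(i)}=\pi_\btheta(\tilde F_i)$ is the translate of the parallelogram $Q_i=\{s f_j+t f_k:\ s,t\in[0,1]\}$ spanned by $f_j,f_k$. Because $\pi_\btheta(\btheta)=0$, the three projections obey the single linear relation $f_1+\theta_1 f_2+\theta_2 f_3=0$ with strictly positive weights $w_1=1,\ w_2=\theta_1,\ w_3=\theta_2$; in particular no two of the $f_i$ are parallel, so each $Q_i$ is a genuine parallelogram. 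Since the common translation cancels in a difference, the overlap criterion becomes: the factor $ab$ occurs if and only if $f_a\in\mathrm{int}(Q_b-Q_a)$, where $Q_b-Q_a$ is the Minkowski difference.

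For the diagonal pairs $a=b$ I would use $Q_a-Q_a=\{u f_j+v f_k:\ u,v\in[-1,1]\}$ together with $f_a=-\tfrac1{w_a}(w_j f_j+w_k f_k)$; this point lies in the interior precisely when $w_j/w_a<1$ and $w_k/w_a<1$, i.e., when $w_a$ is the strict maximum of $w_1,w_2,w_3$. As $1,\theta_1,\theta_2$ are rationally independent they are pairwise distinct, so exactly one diagonal factor $aa$ occurs. For the off-diagonal pairs $a\neq b$ (with $c$ the remaining index), I would solve $f_a=s f_a-t f_b+(s'-t')f_c$ by eliminating $f_c$ through the relation; matching the coefficients of $f_a$ and $f_b$ gives $s=1+uw_a/w_c$ and $t=-uw_b/w_c$ with $u=s'-t'$, and choosing any small $u<0$ with $|u|<\min(1,w_c/w_a,w_c/w_b)$ places all parameters strictly inside their ranges. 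Hence $f_a\in\mathrm{int}(Q_b-Q_a)$ for every positive weight vector, so all six off-diagonal factors occur, and the total count is $6+1=7$. The one step that needs care --- and where the hypothesis genuinely enters --- is the reduction to the interior of a Minkowski difference: one must distinguish a positive-measure overlap from mere boundary contact, and it is exactly the strict positivity of the weights $1,\theta_1,\theta_2$ (and their distinctness) that makes the relevant parameter ranges open and nonempty. Once this reduction is in place, both the diagonal and off-diagonal cases collapse to the elementary inequalities above.
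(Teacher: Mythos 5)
Your proposal is correct, and for item (6) it takes a genuinely different route from the paper. The paper's own treatment of this theorem is essentially bibliographic: items (1)--(4) are attributed to the classical literature on cubic billiards, item (5) is dispatched in one line via \cref{lem:dist} (abelianizations stay within bounded distance of the line directed by $\btheta$), and item (6) is obtained by citing the known complexity function of cubic billiard words ($p(n)=n^2+n+1$ for minimal directions, whence $p(2)=7$), with no computation given. You defer (1)--(5) in the same way, but replace the citation for (6) by a self-contained geometric argument: you characterize occurrence of $ab$ as $(W_{\btheta}^{(a)}+f_a)\cap W_{\btheta}^{(b)}$ having nonempty interior, reduce this to $f_a\in\mathrm{int}(Q_b-Q_a)$ via Minkowski differences, and then exploit the relation $f_1+\theta_1f_2+\theta_2f_3=0$ with positive, pairwise distinct weights to settle all nine pairs: all six off-diagonal words occur, and exactly one diagonal word $aa$ occurs, namely for the letter $a$ of maximal weight. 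This is consistent with the paper: under the normalization $\theta_1>\theta_2$, $\theta_1>1$ of \cref{subsec:proof2}, the unique diagonal factor is $22$, matching the parallelogram $KBCD$ in \cref{fig-dim2} and the frequency table of \cref{subsec:back} (where $\mu[11]$ and $\mu[33]$ are absent). What your approach buys is locality and extra information: it avoids invoking the full complexity computation and identifies \emph{which} seven words occur, which is exactly what the paper needs later when it examines the cells of $\mathcal P_{\btheta}\vee E_{\btheta}^{-1}\mathcal P_{\btheta}$; what the citation buys is brevity and the general-$n$ statement. The one delicate step you rightly flag --- that a word is a factor precisely when its cell has positive measure, i.e., distinguishing genuine overlap from boundary contact --- is the same identification the paper itself asserts without proof in \cref{subsec:cubic}, and your appeal to minimality, genericity of coding points and unique ergodicity is an adequate justification of it.
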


Let us  briefly  give a  few bibliographic   elements  concerning  these results. The coding by the same letter for parallel faces is made such that the coding map  is almost everywhere one to one; see \cite{Ga.Kr.Tr}. Thus the associated subshift
$(X_{\btheta},S)$  is also minimal and uniquely ergodic under the  irrationality hypothesis  made on the direction $\btheta$. 
Letters are balanced by construction: abelianizations of infinite words in the subshift are within bounded distance of the trajectory in ${\mathbb R}^d$ of  the translation (see \cref{lem:dist}). The complexity function has been studied in the following papers \cite{Ar.Ma.Sh.Ta.94, Bary.95, Bed.03, Bed:09}.

\begin{remark}
By Theorem \ref{thm-rappel}, the subshift $(X_{\btheta},S)$ is minimal if and  only if it is uniquely ergodic. In this case, for almost every point $m\in \delta C$, the coding word $x$ of its orbit define the same subshift, i.e., $X_x=X_{\btheta}$.
\end{remark}

\subsection{More on the cubic  case} \label{subsec:cubic}
We now focus on the cubic case $d+1=3$.
We  use the notation of  \cref{subsec:hypercube}. Let   $\btheta=\begin{pmatrix}1\\ \theta_1\\ \theta_2\end{pmatrix}$ be an irrational vector in  $[0,1]^3$.  We recall that  the cube $[0,1]^3$ projects, by the projection $\pi_{\btheta}$,     onto the hexagon  $W_{\btheta} \subset  (\mathbb R\btheta)^\perp$. Moreover,  for every integer $n\geq 1$, there is a partition of $W_{\btheta}$ into cells corresponding to the billiard factors  of length $n$
given by $$W_{\btheta}  ^{(w_1)} \cap   E_{\btheta} ^{-1}  W_{\btheta}  ^{(w_2)} \cap  E_{\btheta} ^{n-1}W_{\btheta}  ^{(w_n)}, \mbox{ for }  w=w_1 \cdots w_n \in {\mathcal
L}(X_{\btheta})  .$$  By unique ergodicity (by \cref{thm-rappel}), the Lebesgue measure of these cells is  then   proportional to the frequency of  factors $\mu[w]$
(up to a renormalizing constant).
We recall that the hexagon  $W_{\btheta}$ is  a fundamental domain for the two-dimensional  torus $L_{\btheta}^\perp/\Lambda_{\btheta}$, with the  the lattice $\Lambda_{\btheta}$ being  generated by $f_1-f_3, f_1-f_2$. Now we consider the translation by $f_1$ on the torus $L_{\btheta}^\perp/\Lambda_{\btheta}$. A simple computation shows that the projection $\pi_{\btheta}$ on $L_{\btheta}^\perp$ has the matrix (expressed in the canonical basis of ${\mathbb R}^3$)
$$\frac{1}{1+\theta_1^2+\theta_2^2}\begin{pmatrix}\theta_1^2+\theta_2^2&-\theta_1&-\theta_2\\ -\theta_1&1+\theta_2^2&-\theta_1\theta_2\\ -\theta_2&-\theta_1\theta_2&1+\theta_1^2
\end{pmatrix}.$$
We deduce the three vectors $f_1, f_2, f_3$. Note  that $f_1+\theta_1f_2+\theta_2f_3=0$.  Hence  the following expression defines the vector of translation in term of the basis of the lattice, namely  $$f_1=\frac{\theta_1}{1+\theta_1+\theta_2}(f_1-f_2)+\frac{\theta_2}{1+\theta_1+\theta_2}(f_1-f_3).$$

On ${\mathbb T}^2$, the translation from Theorem  \ref{thm-rappel}  is  $ {\mathbf x} \mapsto   {\mathbf x} + (\frac{\theta_1}{1+\theta_1+\theta_2}, \frac{\theta_2}{1+\theta_1+\theta_2})$. 
 This translation    acting on the  two-dimensional  torus $\mathbb T^2$ is  measurably isomorphic  to  the  exchange of three rhombi $(W_{\btheta}, E_{\btheta})$, from \cref{eq:exchange}, depicted in  Figure \ref{fig-tore}. 

\subsection{Cut and project sets and tilings }\label{sec:CP}


We now describe cut and project sets of codimension one and link them to billiard words.
For that purpose, we  first recall the notion of  a (codimension one) cut and project set in the simplest setting\footnote{The general definition  of a cut and  project set   is stated   for    locally compact Abelian groups.}. For more  on cut and project sets, see \cite{CRM:2000,BaakeGrimm,MAP:15}. 

\medskip 
{\bf Cut and project sets}  Let $\btheta=(1,\theta_1, \ldots, \theta_d) \in \R^{d+1}_+$.  We consider  a decomposition of $\R^{d+1}$ as a sum of two direct subspaces,
namely   the line  $L_{\btheta}$ directed by the  vector $\btheta \in \R^{d+1}_+$ (called {\em physical space}), and its orthogonal hyperplane $ L_{\btheta}^\perp$ (called {\em internal space}). 
We assume  that  the coordinates of  $\btheta$ are independent over $\mathbb Q$,  i.e., the direction  $\btheta$ is irrational.
We now consider  sets  of points  that  are  formed by projections together with a way of selecting points. 
The selection is done thanks to an acceptance window $W$ that lives in the internal space $L_{\btheta}^\perp$
  (usually assumed to be a polytope). 
  
 We recall that   $\pi_{\btheta}$ stand for  the orthogonal projection  along  the line  $L_{\btheta}$ onto  the  hyperplane  $L_{\btheta}^\perp$.   We let  denote by  $\pi_{\btheta}^\perp$ the orthogonal projection onto  $L_{\btheta}$.
Observe that the irrationality  assumption on   $\btheta$  implies  that 
$\pi _{\btheta}(\mathbb Z^{d+1})$ is dense in  $L_{\btheta}^\perp$, and that
  the restriction of $\pi_{\btheta}^{\perp}$ to $\mathbb Z^{d+1}$ is one-to-one.
  The second point is especially important: if an interval in $L_{\btheta}$ has a length which is in $\pi_{\btheta} (\mathbb Z^{d+1})$, then it ``lifts'' to a unique  vector in $\mathbb Z^{d+1}$.

In the following, we always let  the window $W_{\btheta}= \pi_{\btheta} ([0,1]^{d+1})$  be the projection on $L_{\btheta}^\perp$ of the unit hypercube $[0,1]^{d+1}$. Let $m \in L_{\btheta}^\perp $  be such that  $\pi^\perp(m+\mathbb Z^{d+1})$ does not intersect the boundary of $W_{\btheta}$ in $L_{\btheta}^\perp$. 
Such a point is said  \emph{generic}.
Given a generic $m \in W_{\btheta}$, the \emph{hypercubical $d+1$-to-$1$ cut and project} set  $\Lambda_m$ of parameter $m$ is defined as:
\[
 \Lambda_m = \bigl\{ \pi_{\btheta}^{\perp}(m+n) \in L_{\btheta}  \,: \,  n \in \mathbb Z^d, \ \pi_{\btheta}(m+n) \in W \bigr\}.
\]
For an illustration in the case $d=1$,  see  \cref{fig:line}.
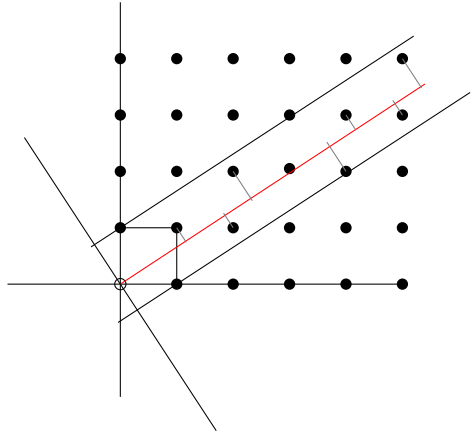
\begin{figure}
\begin{center}
\begin{tikzpicture}[scale=0.75]
\draw (-2,0)--(5,0);
\draw (0,-2)--(0,5);
\fill (1,0) circle(.1); 
\fill (2,0) circle(.1); 
\fill (3,0) circle(.1); 
\fill (4,0) circle(.1); 
\fill (5,0) circle(.1); 
\fill (1,1) circle(.1); 
\fill (1,2) circle(.1); 
\fill (1,3) circle(.1); 
\fill (1,4) circle(.1); 
\fill (2,1) circle(.1); 
\fill (2,2) circle(.1); 
\fill (2,3) circle(.1); 
\fill (2,4) circle(.1); 
\fill (3,1) circle(.1); 
\fill (3,2.05) circle(.1); 
\fill (3,3) circle(.1); 
\fill (3,4) circle(.1); 
\fill (4,1) circle(.1); 
\fill (4,2) circle(.1); 
\fill (4,3) circle(.1); 
\fill (4,4) circle(.1); 
\fill (5,1) circle(.1); 
\fill (5,2) circle(.1); 
\fill (5,3) circle(.1); 
\fill (5,4) circle(.1); 
\fill (0,1) circle(.1); 
\fill (0,2) circle(.1); 
\fill (0,3) circle(.1); 
\fill (0,4) circle(.1); 
\draw (0,0) circle(.1);
\draw (1,0)--(1,1)--(0,1);
\draw[red] (0,0)--(5.4,3.55);
\draw[] (0,0)--(-1.7,2.6);
\draw[] (0,0)--(1.7,-2.6);
\draw (0,1)--++(-.52,-.34);
\draw (1,0)--++(-1.04,-.68);
\draw (0,1)--++(5.2,3.4);
\draw (1,0)--++(5.2,3.4);

\draw[gray] (1,1)--++(.17,-.26);
\draw[gray] (2,1)--++(-.17,.26);
\draw[gray] (2,2)--++(.34,-.52);
\draw[gray] (4,3)--++(.17,-.26);
\draw[gray] (4,2)--++(-.34,.52);
\draw[gray] (5,4)--++(.34,-.52);
\draw[gray] (5,3)--++(-.17,.26);
\end{tikzpicture}
\caption{In red the line $L_{\btheta}$, and the set $\Lambda_m$ for 
$m=0$, which yields a tiling of the   line  $L_{\btheta}$, with $d=2$.} \label{fig:line}
\end{center}
\end{figure}

\medskip

{\bf Subshift  generated by  a cut and project set}
 Let $m$ be a generic point. We follow   \cite{For.Hunt.Kellen.02}.
There is a unique subset $\widetilde{\Lambda}_m$ of $m + \mathbb Z^{d+1}$ which projects onto $\Lambda_m$; the set  $\widetilde{\Lambda}_m$ is included in the strip $W_{\btheta}  \times L_{\btheta} \subset \R^{d+1}$.  Moreover, the set $\Lambda_m$ partitions $L_{\btheta}$ into finitely many intervals having 
exactly $d+1$ different interval lengths, each corresponding to the length of  $\pi_{\btheta}^{\perp}(e_i)$, for  $e_i$ vector  of the canonical basis of $\mathbb R^{d+1}$, $i=1,\ldots, d+1$. 
In other words,  $\widetilde{\Lambda}_m$ defines a ``staircase'' (i.e., a broken line) inside $W_{\btheta} \times L_{\btheta}$, made of concatenated translates of basis vectors.
Therefore, $\Lambda_m$ can be coded into a   word $x(m)  \in \{1,\ldots, d+1\}^{\Z}$, which generates a minimal subshift $X_m$ which does not depend on the parameter $m$ (it depends only on the line $L_{\btheta}$). 
It turns out that this subshift  is in fact   the billiard shift $X_{\btheta}$ defined in \cref{subsec:hypercube}.

\medskip

{\bf  Shift map and $\mathbb Z$-action}
There is an $\mathbb R$-action by translation on $\Lambda_m$ (we identify $L_{\btheta}$ with ${\mathbb R}$).
 This induces a $\mathbb Z$-action via the first return map to $0$. Indeed, 
the   smallest  $t >0   $ such that $\Lambda_m - t$ contains $0$ is a cut
 and project set for a certain parameter $\sigma(m)$,  i.e., 
 $\Lambda_{\sigma(m)}=\Lambda_m-t$.
In fact, if $x(m)$ is the  biinfinite word coding $\Lambda_m$, then $\sigma(m) = m + \pi_{\btheta} (e_{x_0})$. 
This defines a $\mathbb Z$-action on the set   of generic points of the window $W_{\btheta}$. Moreover $m$ is generic if and only if $\sigma(m)$ is generic. Via this identification, the symbolic shift map $x \mapsto Sx$ is  related  to the  map $\sigma$ acting   on the set of generic points of  $W_{\btheta}$ as follows:
 if $x(m)$ is the word coding  $\Lambda_m$, then $Sx(m)$ codes  $\Lambda_{\sigma(m)}$.

The question is  now to find out how much bigger is $X_{\btheta}$ than the set of  words  $x(m)$ obtained  by considering  generic points  $m$ of $W$. It turns out 
that it is not that bigger: 
each non-generic point  $m$ of $W_{\btheta}$ corresponds to  a finite and bounded amount of words in $X_{\btheta}$, as stated below.
\begin{lemma} \cite{For.Hunt.Kellen.02} \label{lem:eta}
 Let $\btheta=(1,\theta_1, \ldots, \theta_d) \in \R^{d+1}_+$ assumed to be irrational.
 The dynamical  systems $( X_{\btheta},S)$ and $(W_{\btheta}, \sigma)$ are measurably conjugate, i.e., 
there exists a  map  $\eta: X_{\btheta} \rightarrow W_{\btheta}$, that is 
 one-to-one precisely on the preimage of the set of generic points  of $W_{\btheta}$: 
$$
\begin{CD}
X_{\btheta} @>S >> X_{\btheta}\\
@VV\eta V @VV\eta V\\
W _{\btheta}@>\sigma>> W_{\btheta}
\end{CD}
$$
Moreover, the cardinality of $\eta^{-1}(\{m\})$ is bounded and only depends on the dimension $d+1$ of the space~$\R^{d+1}$.

\end{lemma}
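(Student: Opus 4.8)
The plan is to realize $\eta$ as the \emph{torus parametrization} of the cut and project set: to each word $x \in X_{\btheta}$ I attach the internal-space coordinate of the staircase that $x$ codes. Concretely, I normalize the broken line coded by $x$ so that its $0$-th vertex sits at the origin, and let $v_n \in \Z^{d+1}$ be its $n$-th vertex, so that $v_{n+1}-v_n = e_{x_n}$ for all $n\in\Z$. A candidate offset $m \in L_{\btheta}^\perp$ produces exactly the staircase of $x$ precisely when every vertex projects into the window, that is, when $m$ lies in
\[
K_x := \bigcap_{n\in\Z}\bigl(\overline{W_{\btheta}} - \pi_{\btheta}(v_n)\bigr).
\]
First I would check $K_x \neq \emptyset$. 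Since $X_{\btheta}$ is the closure of the codings $x(m)$ of generic points, there are generic $m_k$ whose codings agree with $x$ on $[-k,k]$; each $m_k$ satisfies the finitely many constraints indexed by $|n|\le k$, and a convergent subsequence (using compactness of $\overline{W_{\btheta}}$) limits to a point of $K_x$.

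Next I would show that $K_x$ is a single point, which lets me set $\eta(x)$ to be that point. The set $K_x$ is convex, being an intersection of translates of the convex window. If $m \in K_x$, then the forward orbit $\sigma^n(m) = m + \pi_{\btheta}(v_n)$ lies in $\overline{W_{\btheta}}$; since $\sigma$ is conjugate to the minimal translation $T_{\balpha}$ of \cref{thm-rappel}, this orbit is dense in $\overline{W_{\btheta}}$. Hence if $m' = m+u \in K_x$ as well, then $m'$ codes the same word, so $\sigma^n(m') = \sigma^n(m) + u$, and the dense orbit of $m$ translated by $u$ still lies in $\overline{W_{\btheta}}$; taking closures forces $\overline{W_{\btheta}} + u \subseteq \overline{W_{\btheta}}$ and therefore $u = 0$, because $W_{\btheta}$ is bounded with nonempty interior. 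The same shrinking-of-constraints argument gives continuity of $\eta$: if $x,x'$ agree on a long central block their staircases share many vertices, so $K_x$ and $K_{x'}$ both sit inside one common finite intersection, whose diameter tends to $0$. Commutation of the diagram is then immediate, since the staircase of $Sx$ is that of $x$ re-based at $v_1$, whence $\eta(Sx)=\eta(x)+\pi_{\btheta}(e_{x_0}) = \sigma(\eta(x))$ by the uniqueness just established.

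I would then treat genericity. If $m$ is generic, no orbit point meets $\partial W_{\btheta}$, so the selected lattice points, and hence the word, are uniquely determined; thus $\eta^{-1}(\{m\}) = \{x(m)\}$, while conversely $\eta(x(m)) = m$. So $\eta$ is one-to-one exactly over generic points. Because the singular set is a countable union of translates of the lower-dimensional $\partial W_{\btheta}$, it is Haar-null, so $\eta$ is a continuous a.e.\ bijection; since both systems are uniquely ergodic, $\eta$ must push the invariant measure of $X_{\btheta}$ forward to Haar measure on $W_{\btheta}$, giving the asserted measurable conjugacy.

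The main obstacle is the uniform bound on the cardinality of $\eta^{-1}(\{m\})$ for singular $m$. A word in the fiber is determined by $m$ together with a choice, at each orbit point lying on $\partial W_{\btheta}$, of which piece $W_{\btheta}^{(i)}$ the point is assigned to, that is, a resolution of each boundary incidence. The plan is to bound, independently of $m$, the number of $n\in\Z^{d+1}$ with $m + \pi_{\btheta}(v_n) \in \partial W_{\btheta}$: the window is a zonotope with a fixed finite set of facet hyperplanes, and on each of them the incidences correspond to distinct lattice points, which by the injectivity of $\pi_{\btheta}^\perp$ on $\Z^{d+1}$ and the discreteness of the lift inside the bounded strip $\overline{W_{\btheta}}\times L_{\btheta}$ are finite in number and controlled by the facial combinatorics of $W_{\btheta}$ alone. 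Each incidence contributes a bounded number of resolutions, so the fiber size is bounded by a constant depending only on $d+1$; carrying out this combinatorial count precisely is the delicate point, and is where I would lean on the analysis of \cite{For.Hunt.Kellen.02}.
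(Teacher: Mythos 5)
You are comparing against a statement the paper does not prove at all: Lemma~\ref{lem:eta} is quoted from \cite{For.Hunt.Kellen.02}, so the only meaningful comparison is with the construction in that reference, and your torus-parametrization map $\eta(x) :=$ the unique point of $K_x$ is indeed the standard route taken there. Most of your steps are sound: nonemptiness of $K_x$ by compactness, uniqueness because the points $m + \pi_{\btheta}(v_n)$ are dense in $W_{\btheta}$ (density on the torus does transfer to the fundamental domain, since lattice translates of $W_{\btheta}$ have disjoint interiors) and a compact set cannot contain a nonzero translate of itself, continuity via nested compact sets with singleton intersection, equivariance, and the measure statement via unique ergodicity. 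Two smaller omissions are repairable but should be flagged: (i) for generic $m$, the condition $m \in K_x$ only says that the staircase of $x$ is a \emph{sub-chain} of the selected set $\{v \in \Z^{d+1} : m + \pi_{\btheta}(v) \in W_{\btheta}\}$; to get $x = x(m)$ you need the no-skipping observation that a single increment $e_{x_n}$ cannot equal a sum of two or more basis vectors, so the sub-chain is the whole chain; (ii) ``one-to-one \emph{precisely} over generic points'' also requires the converse, namely that every singular point in the image has at least two preimages (limits of codings of generic points approaching from the two sides of the relevant facet), which you never address.

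The genuine gap is the uniform bound on $\Card \eta^{-1}(\{m\})$, which is exactly the part you defer to \cite{For.Hunt.Kellen.02}; since the entire lemma \emph{is} that reference's result, this deferral is circular, and it is also the one place where your sketched justification fails as stated. Discreteness of the lifted staircase inside the strip $W_{\btheta} \times L_{\btheta}$ cannot bound the number of boundary incidences: the lift of the whole staircase is already an infinite discrete subset of that same strip, so discreteness plus boundedness of the window gives nothing finite. The finiteness is not combinatorial but arithmetic. Each facet family of the zonotope $W_{\btheta}$ is a lattice-translate class whose direction space is spanned by $d-1$ of the vectors $f_i = \pi_{\btheta}(e_i)$; if the orbit meets the same family at times $n_1 \neq n_2$, then $(n_2-n_1) f_1 \equiv v \pmod{\Lambda_{\btheta}}$ with $v$ in a bounded subset of that $(d-1)$-dimensional span. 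Rewriting this as a linear relation among $f_1, \ldots, f_{d+1}$, and using that the only such relation is proportional to $f_1 + \theta_1 f_2 + \cdots + \theta_d f_{d+1} = 0$ together with the rational independence of $1, \theta_1, \ldots, \theta_d$, forces the proportionality factor to vanish and then bounds $|n_2 - n_1|$ by a constant depending only on $d$ (for $d=2$ one gets $|n_2-n_1| \le 1$). Hence each of the finitely many facet families contributes boundedly many incidences, each incidence admits boundedly many resolutions, and only then does the fiber bound depending only on $d+1$ follow. Without this argument, the final assertion of the lemma — the one that actually requires work — remains unproved.
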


Note that continuous functions defined  on $W_{\btheta}$ can be  pulled back to continuous functions on $X_{\btheta}$; however there are more continuous functions on $X_{\btheta}$ than the pullbacks from $W_{\btheta}$ (for example, the subshift $X_{\btheta}$ is totally disconnected, it has therefore many locally constant functions and the map $\eta$ is not  a topological conjugacy,   it is not a homeomorphism).
This almost-identification of $X_{\btheta}$ and $W_{\btheta}$  allows us to have a geometric interpretation of statements on the subshift $X_{\btheta}$ (see \cref{sec:cohobi}).


\section{Cohomology  and balance} \label{sec:coho}
The main concept   for proving \cref{theo:main} is {cohomology}, especially some cohomology classes defined  by Clark and Sadun  \cite{CS:03,CS:06}
for tiling spaces and identified in the cut and project setting by Kellendonk and Sadun \cite[Section 6]{Kell.Sad.17}. We focus namely  on the   class of  asymptotically negligible cocycles. Note that corresponding notions  exist  in the symbolic  dynamics setting, which  are  strongly related to dimension groups for subshifts
(a dimension group is an ordered abelian group which is related to the orbit structure of the  subshift).  
For more on the cohomology of tilings, see for instance \cite{Kellendonk,Kell.Putn.06,Sad.07,Sad.08,AnRobin} and for  more  on the symbolic view point, see  e.g. \cite{GPS:95,DuPerrin}. 

\subsection{First cohomological definitions}
Cohomology classes can be represented by functions on $X$ or on $X \times \mathbb Z$, which satisfy specific properties. 
In all that follows,  $\bA$ stands either for  $\mathbb R$ or $\mathbb Z$.

\begin{definition}\label{def:cochain}
Let $(X,S)$ be a subshift.
 A  {\em cochain} with values in $\bA$ is a function $\phi: X \times \mathbb Z \rightarrow \bA$.
 It is a \emph{cocycle} if it satisfies the  following  condition, for all $x\in X$ and all $m,n \in \Z$: $$\phi(x, n+m) = \phi(x,n) + \phi(S^n x, m).$$
 A cochain is a \emph{coboundary} if there exists a function $g: X \rightarrow \bA$ such that,  for all $x\in X$ and all $n \in \Z$, one has  
 \begin{equation} \label{eq:cob}
 \phi(x,n) = g(S^n x) -  g (x).
 \end{equation}
 If we need the precision, we say that $\phi$ is the coboundary of $g$.
\end{definition}

 \begin{example}\label{ex:factorcounting}
 Let $(X,S)$  be a subshift defined over the alphabet $\A$. Let $w \in \A^*$.
 We recall that    the notation ${\mathbf 1}_w$     stands  for the characteristic function  of the cylinder $[w]$  on $X$.
We define the factor-counting  cochain  $\psi_w$   as 
  $$\psi_w (x,n) = \Card \bigl\{ i \in [0, n-1]  \,: \,  x_i \cdots x_{i+|w|-1} = w\bigr\}  \mbox{ for } n   \geq 0$$
   $$\psi_w (x,n) = \Card \bigl\{ i \in [n, -1]  \,: \,  x_i \cdots x_{i+|w|-1} = w\bigr\}  \mbox{ for } n <0$$
   for all  $x$ in $X$  and  for all $n \in {\mathbb Z}$.
   One checks that   $ \psi_w (x,n) =S_n ({\mathbf 1}_w) (x)$, for all  $x$ in $X$  and  for all $n$,
which implies that the   cochain $\psi_w$ is in fact  a cocycle.
 \end{example}

\begin{remark} Note that all  coboundaries are automatically cocycles. 
Observe also that the one-dimensional  tilings of   the line $E_{\btheta}$ induced by the billiard words   in $X_{\btheta}$ (as  evoked in \cref{sec:CP}) provide  a decomposition of 
 the line   into $0$-cells (vertices) and  $1$-cells (edges/tiles). A $k$-cochain is an assignment of a real number to each $k$-cell.
Technically, we should call cochains ``$1$-cochains'', while the function $g$  shoud be called   ``$0$-cochain'', in order to be able to say
 that  ``a $1$-coboundary is the coboundary of a $0$-cochain''. Since we only work in degree $0$ and $1$  (we work with words and one-dimensional tilings), we have chosen   not  to    keep 
 the reference to  the degree  in the  terminology  used in the present paper.  \end{remark} 
 In the symbolic dynamics  literature, it is common to see coboundaries defined in terms of  functions   on $X$ with  values in $\bA$. In fact,  there is  a one-to-one correspondance  between   functions and cocycles (as considered  in \cref{def:cochain}).
Indeed,  given  a  function $f: X \rightarrow \bA$,   we  associate with it  the cochain  $\phi$ defined  as follows
    for all $x \in X$:   $\phi(x,1)= f(x)$ and for all $n \in \Z$
 \begin{equation}\label{eq-cocycle-base}
 \begin{array}{ll}
  \phi(x,n) & =f(x) + f\circ S(x) + \cdots + f \circ  S^{n-1}(x), \mbox{ for } n \geq 0,\\
 \phi(x,n) &=f \circ S^{-1}x+  \cdots + f \circ  S^{n}(x), \mbox{ for } n <0.
 \end{array}
 \end{equation}
In other words,  for $n \geq 0$,  $ \phi(x,n)$   is equal to   the value at $x$ of  the Birkhoff sum  $S_n f$of order $n$ associated with $f$.   
  One then  checks that  the cochain $\phi$ satisfies the  cocyle relation. This thus  allows us to extend the notion of  a coboundary to functions.
   \begin{definition} Let $(X,S)$ be a subshift.
      Let ${\mathcal F} (X,\bA)$ stand for the set of  functions  defined on $X$ with  values in $\bA$.
A function   $f  \in {\mathcal F} (X,\bA)$ is said to be a coboundary if there exists  $g$ in ${\mathcal F} (X,\bA)$
such that 
\begin{equation}\label{eq:cob2}
f=g \circ S-g.
\end{equation}
   The set of coboundaries  of  ${\mathcal F}(X,A)$ is denoted as  $\partial  {\mathcal F}(X,A)$.
   \end{definition}

 Two functions $f,g$
in  ${\mathcal F}(X,A)$ are said to be {\it cohomologous} if $f-g$ is a coboundary.  
This  provides  an equivalence relation  on  ${\mathcal F}(X,A)$.   This  equivalence relation can also
 be defined on the  set of   cochains that are cocycles:
two  cocyles  are  \emph{cohomologous}  if their difference is a coboundary. 

Note that when  restricting to continuous functions and to  $\bA={\mathbb Z}$, we recover the  group part   $ {\mathcal C}(X,{\mathbb Z})/\delta  {\mathcal  C}(X,{\mathbb Z})$ of the  dynamical   dimension group;  see for instance
\cite{GPS:95}. 

\subsection{Strong  and  asymptotically negligible cocycles} \label{subsec:an}
We now define two notions of cohomologies  depending on topological properties of the associated  functions,  both in the setting of cocycles   and functions.  Examples  will be provided  below and in  \cref{subsec:an2}. A crucial point is that   we consider $\bA=\R$.

\begin{definition}\label{def:coho}
Let $(X,S)$ be a subshift.
\begin{itemize} 
\item  A cocycle  is  said to be    a  {\em strong cocycle} if, for all $n \in {\mathbb Z}$,  the following map is  {\em locally constant}:
 $$\begin{array}{ccc}
 X&\to&\R\\
 x& \mapsto& \phi(x,n). 
 \end{array}$$

We   define    $H^1_{\tt{strong}}(X,\mathbb R)$ as the     group  of classes of  locally constant  cocycles  modulo      coboundaries of locally constant  functions (such as defined in \cref{eq:cob}),
and  analogously $\cH^1_{\tt{strong}}(X,\mathbb R)$ as the     group  of classes of  locally constant functions, 
 modulo      coboundaries of locally constant   functions (such as defined in \cref{eq:cob2}).

 \item  
A  strong cocycle  is said to be  \emph{asymptotically negligible} if it is  a  coboundary of a continuous function.
Analogously, a locally constant  function is said to be  {\em asymptotically  negligible} if  it is  a  coboundary of a continuous function.

 We will use the  notation $H^1_{\tt{an}}(X,\mathbb R)$   for  the  group  generated  by the classes of  asymptotically negligible cocyles,  modulo      coboundaries of locally constant  functions,
 and  $\cH^1 _{\tt{an}}(X,\mathbb R)$ for  the group  generated by     asymptotically negligible  functions,  modulo      coboundaries of locally constant   functions. 

\end{itemize}
\end{definition} 
Note that with the previous notation, $\cH^1 _{\tt{an}}(X,\mathbb R)$ is equal to the classes of elements of $ \delta {\mathcal C } (X, {\mathbb R})$, where $ \delta {\mathcal C } (X, {\mathbb R})$
stands for the set of continuous functions defined on $X$ with values in ${\mathbb R}$.

Considering locally constant functions in the  notion of  strong cohomology  is related to  strong pattern equivariance in the tiling literature \cite{Sadun:16}, whereas   considering  continuous functions corresponds to   weakly pattern equivalent functions.
\begin{remark}
We recall that a  continuous function with values in ${\mathbb Z}$ is locally constant. Moreover, if a  continuous function with values in ${\mathbb Z}$  is a coboundary,  then it is  the coboundary of some continuous function \cite[Proposition 3.1.3]{DuPerrin}. 

Consider now the case of functions taking real values. An   asymptotically  negligible function $f =g\circ S- g$, with $g$ continuous with values in ${\mathbb R}$,     has no reason to be  be locally constant   (this would be the case if
 functions  would   take  integral  values).   It thus makes sense  to consider  classes   in $\cH_{\tt{strong}}(X,{\mathbb R}$) of  asymptotically negligible elements (they are not    necessarily   reduced to the 
 zero class  when considered modulo coboundaries of   locally constant functions). 
 \end{remark}

\begin{lemma}  \label{lem:factorcounting} Let $(X,S)$ be  a  subshift on ${\mathcal A}$.  
For every $w \in  {\mathcal A}^*$ of length $k$, 
the  factor-counting map $\psi_w$ defined by
\begin{equation}\label{eq-cocycle-base2}
 \psi_w (x,n) = \Card \bigl\{ i \in [0, n-1]  \,: \,  x_i \cdots x_{i+k-1} = w\bigr\}
\end{equation}
 is a strong cocycle.
\end{lemma}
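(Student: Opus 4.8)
The plan is to check, directly from \cref{def:cochain} and \cref{def:coho}, the two properties that make $\psi_w$ a strong cocycle: first that it satisfies the cocycle relation, and then that for each fixed $n$ the map $x \mapsto \psi_w(x,n)$ is locally constant. The cocycle relation is already essentially recorded in \cref{ex:factorcounting}, where one observes that $\psi_w(x,n) = S_n({\mathbf 1}_w)(x)$; together with the correspondence \eqref{eq-cocycle-base} between functions and cocycles via Birkhoff sums, this shows that $\psi_w$ satisfies the cocycle identity. So the only genuinely new content of the lemma is the \emph{strong} (locally constant) part.

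For completeness I would also spell out the cocycle relation by hand. For $n, m \geq 0$ the plan is to split the counting index set $[0, n+m-1]$ into the two blocks $[0, n-1]$ and $[n, n+m-1]$. The first block contributes exactly $\psi_w(x,n)$, and after the substitution $j = i - n$ the second block counts the occurrences of $w$ at positions $j \in [0, m-1]$ in the shifted word $S^n x$, that is $\psi_w(S^n x, m)$. This yields $\psi_w(x, n+m) = \psi_w(x,n) + \psi_w(S^n x, m)$, and the cases where $n$ or $m$ is negative follow by the same bookkeeping using the sign convention in the definition of $\psi_w$.

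For the strong property, the key observation is that for each fixed $n$ the integer $\psi_w(x,n)$ depends on only finitely many coordinates of $x$. Indeed, for $n \geq 0$ the factors $x_i \cdots x_{i+k-1}$ being counted range over $i \in [0, n-1]$, so only the coordinates $x_0, \ldots, x_{n+k-2}$ are involved; for $n < 0$, the indices $i \in [n, -1]$ involve only $x_n, \ldots, x_{k-2}$. In either case $\psi_w(\cdot, n)$ factors through the projection of $X$ onto a finite window of coordinates. Since $\mathcal{A}$ carries the discrete topology and $X \subset \mathcal{A}^{\mathbb Z}$ the product topology, any function of finitely many coordinates is locally constant, because each cylinder fixing the relevant coordinates is open (and closed). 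Hence $x \mapsto \psi_w(x,n)$ is locally constant for every $n$, which is precisely the strong cocycle condition.

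I do not expect any serious obstacle in this lemma: it is essentially a matter of unwinding definitions. The only care required is in correctly tracking the two index ranges for the $n \geq 0$ and $n < 0$ regimes, and—if one verifies the cocycle identity by hand rather than quoting \cref{ex:factorcounting}—in handling the sign cases without arithmetic slips.
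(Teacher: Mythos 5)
Your proposal is correct and takes essentially the same route as the paper's proof: the cocycle identity comes from the identification $\psi_w(x,n) = S_n({\mathbf 1}_w)(x)$ of \cref{ex:factorcounting} (which you additionally verify by hand by splitting the index range $[0,n+m-1]$), and the strong property is a direct check that $x \mapsto \psi_w(x,n)$ is locally constant for each fixed $n$. The only cosmetic difference is in that last step --- the paper argues that the map is continuous and integer-valued, hence locally constant, while you argue that it depends on only finitely many coordinates, hence is constant on clopen cylinders --- and both arguments are valid.
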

\begin{proof} We have seen  that  the cochain $ \psi_w (x,n) =S_n ({\mathbf 1}_w) (x)$, for all  $x$ in $X$  and  for all $n$, is a cocycle (by \cref{ex:factorcounting}).
Now for all $n$,  the map $ x \mapsto \psi_w (x,n)$ is a continuous function  that  takes integer values.  It is thus locally constant and  we have a strong cocycle.
\end{proof}

Let us now  revisit the results of \cref{subsec:eigen}.
By direct application of Gottshalk-Hedlund's Theorem  (see \cref{thm-got}) we deduce the following  characterization of  asymptotically negligible cocyles.
\begin{corollary}\label{cor-angh} Let $(X,S)$ be  a minimal  subshift.
 A  strong cocycle  $\phi: X \times \mathbb Z \rightarrow \R$ is asymptotically negligible if and only if for all (equivalently for one) $x$ in $X$, the map $n \mapsto \phi(x,n)$ is bounded.
\end{corollary}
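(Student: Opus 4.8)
The plan is to apply Gottschalk-Hedlund's theorem (\cref{thm-got}) directly to the function underlying the cocycle. The key observation is the one-to-one correspondence, recorded in \cref{eq-cocycle-base}, between functions $f \in \mathcal{F}(X,\R)$ and cocycles $\phi$ via $\phi(x,n) = S_n f(x)$ for $n \geq 0$. For a strong cocycle, the associated function $f$ is $x \mapsto \phi(x,1)$, which is locally constant and hence continuous. This brings us exactly into the hypotheses of \cref{thm-got}.

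First I would set $f(x) := \phi(x,1)$ and observe that since $\phi$ is a strong cocycle, $f$ is locally constant, so $f \in \mathcal{C}(X,\R)$. By the cocycle relation and the construction in \cref{eq-cocycle-base}, one has $\phi(x,n) = S_n f(x)$ for all $n \geq 0$, and the analogous identity for $n < 0$. Now suppose the map $n \mapsto \phi(x_0, n)$ is bounded for some $x_0 \in X$. This means precisely that the sequence of Birkhoff sums $(S_n f(x_0))_n$ is bounded (for $n \geq 0$; the negative side is handled symmetrically). By \cref{thm-got}, $f$ is a coboundary: there exists a continuous $g \in \mathcal{C}(X,\R)$ with $g \circ S - g = f$. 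By \cref{def:coho}, a strong cocycle that is a coboundary of a continuous function is exactly an asymptotically negligible cocycle, which gives one direction.

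For the converse, assume $\phi$ is asymptotically negligible, so $\phi$ is the coboundary of a continuous function $g$, meaning $\phi(x,n) = g(S^n x) - g(x)$ for all $x$ and $n$. Since $g$ is continuous on the compact space $X$, it is bounded, say $|g| \leq M$. Then $|\phi(x,n)| = |g(S^n x) - g(x)| \leq 2M$ for all $x$ and all $n$, so the map $n \mapsto \phi(x,n)$ is bounded uniformly in $x$. This establishes the ``only if'' direction and simultaneously shows that boundedness holds for \emph{all} $x$ rather than merely one.

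The only subtlety—and it is minor—is reconciling the ``for all (equivalently for one)'' phrasing with the one-sided formulation of \cref{thm-got}, which asserts the existence of a single $x_0$ with bounded forward Birkhoff sums. Here minimality is the crucial ingredient: Gottschalk-Hedlund guarantees that boundedness of $(S_n f(x_0))_n$ at a single point already forces $f$ to be a coboundary, and once $f$ is a coboundary the bound propagates to every $x \in X$ via the continuous transfer function $g$ as in the previous paragraph. Thus I would not expect any real obstacle; the proof is essentially a translation of \cref{thm-got} into cocycle language, with the boundedness in both time directions following automatically from the boundedness of $g$ on the compact space $X$.
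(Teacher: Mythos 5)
Your proof is correct and follows exactly the paper's route: the paper derives \cref{cor-angh} as a direct application of Gottschalk--Hedlund (\cref{thm-got}) to the locally constant function $x \mapsto \phi(x,1)$, with the converse and the ``for all $x$'' strengthening coming from boundedness of the continuous transfer function $g$ on the compact space $X$, just as you argue.
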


Let us investigate  what  balance means  in terms of cohomology. 

Let  ${\mathbf l}: X \times \mathbb Z \rightarrow \R$ stand for the length-counting cocycle defined ${\mathbf l} (x,n) = n$ for all  $x \in X$ and $n \in   \mathbb Z $. Note that it is not an asymptotically negligible cocycle.

\begin{proposition}\label{cor-balanced-asympt}
Let $(X,S,\mu)$ be  a minimal  and uniquely ergodic subshift. Let $w$ be a finite factor of its language ${\mathcal L}(X)$.
The subshift   $(X,S)$ is  balanced on $w$ if and only if  the cocycle $\psi_w - \mu[w] \mathbf l$  is asymptotically negligible, which is  in turn equivalent to $S_n({\mathbf 1}_w   -  \mu[w]{\mathbf 1} )$ being bounded uniformly in $n$.

\end{proposition}

\begin{proof}
We know from  Lemma \ref{lem:factorcounting}  that $\psi_w $ is a strong cocyle.  This is also  the case of  $\psi_w  - \mu[w] \mathbf l$.
By Corollary \ref{cor-angh} the symbolic discrepancy $S_n({\mathbf 1}_w   -  \mu[w] {\mathbf 1})$ is bounded uniformly in $n$ if and only if the cocycle is asymptotically negligible.
We then  use \cref{prop:disc}.
 \end{proof}

\section{On the  cohomology group of the billiard}\label{sec:cohobi}

We first  recall a    key fact about the cohomology group of the hypercubic billiard  which is central for the proof of \cref{theo:main}.
\begin{theorem}\cite[Theorem~6.10]{For.Hunt.Kellen.02}\label{prop-jul}
Let $\btheta \in  {\mathbb R}^{d+1}_+$ be an irrational  direction. Let  $(X_{\btheta},S)$    be the billiard  shift associated with the direction $\btheta$.
Let $L_{\btheta}^\perp$ stand for the hyperplane with normal vector $\btheta$.
If  $d\geq 2$,  then the  cohomology group  $H^1_{\tt{strong}}(X_{\btheta},\mathbb R)$   of  the subshift $(X_{\btheta},S)$  (as defined in \cref{def:coho}) is not finitely generated.
\end{theorem}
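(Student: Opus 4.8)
The plan is to compute $H^1_{\tt{strong}}(X_{\btheta},\mathbb R)$ through the geometry of the window $W_{\btheta}$ and to produce, at arbitrarily high scales, genuinely new classes that are not killed by coboundaries, so that this space, which is a real vector space, is infinite-dimensional and hence not finitely generated. First I would transport the question to the internal space $L_{\btheta}^\perp$. Using the almost-conjugacy $\eta$ of \cref{lem:eta} together with the identification of strong cocycles with locally constant functions (as in \cref{lem:factorcounting}), a locally constant cocycle is, up to the finite bounded multiplicity of $\eta$ over the singular points, encoded by a function on $W_{\btheta}$ that is constant on the cells of the order-$n$ partition
$$ W_{\btheta}^{(w_1)} \cap E_{\btheta}^{-1} W_{\btheta}^{(w_2)} \cap \cdots \cap E_{\btheta}^{-(n-1)} W_{\btheta}^{(w_n)}, \quad w = w_1\cdots w_n \in \mathcal L(X_{\btheta}), $$
for some $n$. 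Accordingly I would realize $H^1_{\tt{strong}}(X_{\btheta},\mathbb R)$ as the direct limit over $n$ of the finite-dimensional quotients of cell-constant functions modulo the coboundaries available at each level, with connecting maps induced by the refinement of the partitions. The group fails to be finitely generated precisely when this direct limit is infinite-dimensional.

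Next I would analyse the cell decompositions geometrically. Each cell of the order-$n$ partition is a convex polytope cut out of $W_{\btheta}$ by finitely many \emph{walls}, namely lattice translates $H + \sum_i k_i f_i$ (with $k_i\in\mathbb Z$) of the hyperplane pieces forming $\partial W_{\btheta}^{(i)} = \pi_{\btheta}(\partial \tilde{F}_i)$; concretely the walls appearing at level $n$ are the sets $\sigma^{-j}$ of the original window walls for $0\le j\le n-1$. The decisive dichotomy is the dimension of these walls. For $d=1$ (the Sturmian case) the walls are points, $W_{\btheta}$ is an interval, and although the cut points become dense, only finitely many combinatorial types of configuration arise up to the translation structure; the corresponding limit stabilises and the cohomology is finitely generated, consistently with the balance of Sturmian words. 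For $d\geq 2$ the walls are affine subspaces of dimension $d-1\geq 1$, and as $n$ grows their translates meet $W_{\btheta}$ in arrangements whose combinatorics never stabilises: new transverse intersections of walls and new maximal cells keep appearing. This is the quantitative source of rank growth, and it mirrors the computations of \cite{For.Hunt.Kellen.02} and the cut-and-project identification in \cite{Kell.Sad.17}.

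The main obstacle, and the heart of the argument, is to show that this rank growth \emph{survives} in the direct limit rather than being telescoped away. Here I would build infinitely many linearly independent linear functionals on $H^1_{\tt{strong}}(X_{\btheta},\mathbb R)$. To each wall that first appears at level $n$, associate the functional sending a cell-constant function to the total jump of that function across the wall inside $W_{\btheta}$, integrated over the wall (this integral is meaningful and can be made nonzero exactly because the wall is $(d-1)$-dimensional with $d-1\geq 1$). A coboundary $g\circ S-g$ with $g$ constant on level-$m$ cells has jumps only across walls of level at most $m+1$, hence no jump across a wall that is new at some level $n\geq m+2$; consequently each such jump-functional annihilates every coboundary of a locally constant function and therefore descends to $H^1_{\tt{strong}}(X_{\btheta},\mathbb R)$. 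Evaluating on the characteristic function of a cell adjacent to the corresponding new wall shows the functional is nonzero, and since distinct new walls are detected by functionals that can be separated on test functions jumping across a single wall, these functionals are linearly independent. A new wall, and hence a new independent functional, is produced at infinitely many scales $n$, which forces $H^1_{\tt{strong}}(X_{\btheta},\mathbb R)$ to be infinite-dimensional.

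I expect the genuinely delicate point to be this last separation step: controlling the image of the coboundary operator on locally constant functions finely enough to guarantee that the polytopal wall-contributions are not all cohomologically trivial, and verifying that the chosen jump-functionals are well defined and independent on the direct limit rather than merely at each finite level. This is precisely where the hypothesis $d\geq 2$ enters in an essential way, and it is what forces the use of the cut-and-project cohomology machinery rather than a purely combinatorial count.
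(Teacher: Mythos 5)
First, a point of comparison: the paper does not prove this statement at all — it is imported verbatim as \cite[Theorem~6.10]{For.Hunt.Kellen.02} — so your sketch can only be judged on its own terms, and on those terms it has a genuine gap.

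The gap sits exactly where you flag it, and it is not a technicality but the content of the theorem: your jump-functionals do not descend to $H^1_{\tt strong}(X_{\btheta},\mathbb R)$. What you actually prove is that for a \emph{fixed} coboundary $g\circ S-g$ with $g$ of level $m$, all walls first appearing at levels $n\ge m+2$ see no jump; but the quotient defining $H^1_{\tt strong}$ is by coboundaries of locally constant functions of \emph{arbitrary} level, so a fixed functional $\Phi_H$ (with $H$ new at level $n$) must annihilate coboundaries of every level $m$, including $m\ge n-1$. It does not: take $g$ to be the indicator of a single cell $c$ of level $m\gg n$, chosen so that one face of $c$ is a sub-piece of $H$. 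Read on the window via $\eta$, the term $-g$ contributes a jump $\pm1$ across that sub-piece of $H$, while the jumps of $g\circ S$ sit on $\sigma^{-1}(\partial c)$, which (for suitable $c$, by irrationality of $\btheta$) is disjoint from $H$; hence $\Phi_H(g\circ S-g)\neq 0$. Your ``consequently'' swaps the quantifiers, and with it the entire family of allegedly independent classes collapses. The natural repair — summing jumps over the whole $\sigma$-orbit of $H$ so as to get an invariant functional — is not available either, since that orbit is infinite and dense in $W_{\btheta}$.

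A second symptom that the mechanism cannot be the right one: nothing in your descent-and-separation scheme genuinely uses $d\ge 2$. A jump across a $0$-dimensional wall (a point, the case $d=1$) is just as well-defined a number as an integral over a positive-dimensional wall, and new point-walls likewise appear at every level in the Sturmian case; so your argument, if valid, would prove that Sturmian strong cohomology is not finitely generated, contradicting the case $d=1$ that the paper explicitly excludes after the proof of \cref{theo:main}. In \cite{For.Hunt.Kellen.02} (see also the cut-and-project reading in \cite{Kell.Sad.17}) the true source of infinite generation is not the walls but their \emph{intersections}: the cohomology is computed via the (co)homology of the $\mathbb Z^{d+1}$-action on a module generated by the cut regions, and when the internal space has dimension $\ge 2$ the singular hyperplanes meet in infinitely many distinct lattice orbits of lower-dimensional flats, each contributing an independent generator, whereas for $d=1$ there are only finitely many orbits of singular points. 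Your geometric intuition (``new transverse intersections keep appearing'') points at exactly this, but the sketch never converts intersections — as opposed to individual walls — into cohomological invariants, and that conversion is precisely what the theorem requires.
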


The aim of  this section is to discuss in more details  asymptotically negligible elements. This is the second ingredient required for the proof of \cref{theo:main}.

\subsection{Examples of asymptotically negligible elements for billiards} \label{subsec:an2}

We first  provide in this section  a geometric way for  constructing  asymptotically negligible elements for an irrational   dimensional hypercubic billiard in ${\mathbb R}^{d+1}$.

We use the following notation:  if $a \in \{1,2,\ldots, d+1\}$, $e_a$ stands for  the corresponding basis vector in $\mathbb R^{d+1}$, and  if $w \in  \{1,2,\ldots, d+1\}^*$, $e_w = \sum_i e_{w_i}$. 
Let $\btheta \in  {\mathbb R}^{d+1}_+$ be an irrational  direction. 

Let $ (X_{\btheta},S)$    be the billiard  shift associated with the direction $\btheta$.
Given a vector $v \in E_{\btheta}^\perp$,    the cocycle $\phi_v$ is defined   on $X_{\btheta} \times \mathbb Z$ with  values in ${\mathbb R}$  as follows:  for all $x \in X_{\btheta}$, one has 
\[
 \phi_v(x, n) = \langle v, e_{x_0 x_1 \cdots x_{n-1}} \rangle, \mbox{ for all }  \ n  \geq 0,
\]
\[
 \phi_v(x, n) = \langle v, e_{x_{-n}  \cdots x_{-1}} \rangle, \mbox{ for all } \ n  <0.\]

This cocycle is defined  with respect to the  geometric realization of  prefixes  of  elements of  $X$ obtained by ``abelianizing'' words $w$ in ${\mathcal A}^*$  by vectors $(|w_a|)_{a \in {\mathcal A}}$  in  ${\mathbb R}^{d+1}$.
The corresponding function is $$f_v:X_{\btheta} \rightarrow {\mathbb R}, \ x \mapsto  \langle v, e_{x_0}\rangle.$$

\begin{lemma} \label{lem:phiv}
Let $\btheta \in  {\mathbb R}^{d+1}_+$ be an irrational  direction. Let  $(X_{\btheta},S)$    be the billiard  shift associated with the direction $\btheta$.
Let $L_{\btheta}^\perp$ stand for the hyperplane with normal vector $\btheta$.
For any  $v \in L_{\btheta}^\perp$,  the map $\phi_v$ is an  asymptotically negligible strong cocycle.
\end{lemma}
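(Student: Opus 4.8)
The plan is to verify the two defining properties in turn: first that $\phi_v$ is a strong cocycle, and then that it is asymptotically negligible, reducing the latter to a boundedness statement through \cref{cor-angh}.

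\textbf{Cocycle and strong cocycle.} First I would note that $\phi_v$ is precisely the Birkhoff-sum cocycle attached to the continuous function $f_v(x)=\langle v, e_{x_0}\rangle$, since for $n \geq 0$
\[
\phi_v(x,n) = \Big\langle v, \sum_{k=0}^{n-1} e_{x_k}\Big\rangle = \sum_{k=0}^{n-1} \langle v, e_{x_k}\rangle = \sum_{k=0}^{n-1} f_v(S^k x),
\]
and symmetrically for $n<0$; by the construction recalled in \cref{eq-cocycle-base} this automatically satisfies the cocycle relation. For each fixed $n$, the value $\phi_v(x,n)$ depends only on the finitely many coordinates $x_0,\dots,x_{n-1}$, so the map $x \mapsto \phi_v(x,n)$ is locally constant, and $\phi_v$ is a strong cocycle.

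\textbf{Reduction to boundedness.} By \cref{cor-angh}, it then suffices to prove that for one $x \in X_{\btheta}$ the sequence $n \mapsto \phi_v(x,n)$ is bounded, and here I would use the cut-and-project picture of \cref{sec:CP}. Since $v \in L_{\btheta}^\perp$ is orthogonal to the physical line $L_{\btheta}$, for any $u \in \R^{d+1}$ one has $\langle v, u\rangle = \langle v, \pi_{\btheta}(u)\rangle$, the component of $u$ along $L_{\btheta}$ being annihilated by $v$. Applying this with $u = \sum_{k=0}^{n-1} e_{x_k}$ gives
\[
\phi_v(x,n) = \Big\langle v, \sum_{k=0}^{n-1} f_{x_k}\Big\rangle, \qquad f_i = \pi_{\btheta}(e_i).
\]

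\textbf{Boundedness via the window.} Now I would choose a generic $x$, corresponding under \cref{lem:eta} to a generic window point $m = \eta(x)$, so that $x$ codes the staircase $\widetilde{\Lambda}_m \subset W_{\btheta} \times L_{\btheta}$. Consecutive vertices of this staircase differ by the basis vectors $e_{x_k}$, so the $n$-th vertex equals $p_0 + \sum_{k=0}^{n-1} e_{x_k}$ for a fixed reference vertex $p_0$. By the very definition of the cut-and-project strip, every staircase vertex projects into the window, whence
\[
\sum_{k=0}^{n-1} f_{x_k} = \pi_{\btheta}\Big(p_0 + \sum_{k=0}^{n-1} e_{x_k}\Big) - \pi_{\btheta}(p_0) \in W_{\btheta} - \pi_{\btheta}(p_0),
\]
a bounded subset of $L_{\btheta}^\perp$ since $W_{\btheta}$ is compact; the analogous argument running backwards along the staircase handles $n<0$. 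Therefore $|\phi_v(x,n)| \leq \|v\|\, \sup_{w \in W_{\btheta}} \|w - \pi_{\btheta}(p_0)\|$ for all $n$, so $n \mapsto \phi_v(x,n)$ is bounded and \cref{cor-angh} yields that $\phi_v$ is asymptotically negligible.

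I expect the main obstacle to be making the symbolic-to-geometric bridge fully precise: identifying the abelianized partial sums $\sum_{k=0}^{n-1} e_{x_k}$ with the vertices of the cut-and-project staircase $\widetilde{\Lambda}_m$ and justifying that they remain inside the bounded strip $W_{\btheta}\times L_{\btheta}$. Once that is in place, the boundedness estimate and the appeal to \cref{cor-angh} are routine.
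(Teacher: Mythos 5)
Your proof is correct, but it closes the argument by a different mechanism than the paper. Both proofs share the same first half (cocycle identity by bilinearity, local constancy since $\phi_v(\cdot,n)$ depends on finitely many coordinates) and the same underlying geometric fact, namely that the projected abelianized orbit $\pi_{\btheta}\bigl(e_{x_0\cdots x_{n-1}}\bigr)$ stays inside the compact window $W_{\btheta}$, up to translation. The difference is in how that fact is converted into asymptotic negligibility. The paper never invokes Gottschalk--Hedlund here: it \emph{exhibits} the transfer function explicitly, namely $b(x)=\langle v,\eta(x)\rangle$ with $\eta$ the factor map of \cref{lem:eta}, and verifies $\phi_v(x,n)=b(S^nx)-b(x)$ directly from the relation $\sigma^n(m)=m+\pi_{\btheta}(e_{x_0\cdots x_{n-1}})$ together with $v\in L_{\btheta}^\perp$; continuity of $b$ is inherited from continuity of $\eta$, and the definition of asymptotic negligibility is checked verbatim. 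You instead establish boundedness of $n\mapsto\phi_v(x,n)$ along a single generic orbit via the staircase picture and then appeal to \cref{cor-angh} to obtain the existence of a continuous transfer function abstractly. Your route is softer: it avoids having to guess the coboundary, at the cost of using the full strength of Gottschalk--Hedlund and of requiring minimality of $(X_{\btheta},S)$ — a hypothesis of \cref{cor-angh} that you should cite explicitly; it holds here because $\btheta$ is irrational (\cref{thm-rappel}). The paper's route buys a concrete formula for the transfer function, which is in the same spirit as what is later exploited in \cref{thm-cohom-an}, where the identity $\phi_{v_a}=-(\mu[a])^{-1}\psi_a+\mathbf{l}$ and the explicit geometry of $W_{\btheta}$ are used quantitatively.
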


\begin{proof}
We have to prove that   the map $\phi_v$  is a locally constant cocyle that is  the coboundary of a continuous function. 
First, we prove that   the map $\phi_v$  is a cocycle.   Let $x \in X_{\btheta}$ and $n,m \in {\mathbb N}$. One has
$$\phi_v(x,n+m)= \langle v, e_{x_0 x_1 \cdots x_{n+m-1}} \rangle= \langle v, e_{x_0 x_1 \cdots x_{n-1}} \rangle+ \langle v, e_{x_n x_{n+1} \cdots x_{n+m-1}} \rangle,$$
i.e.,
$$\phi_v(x,n+m)=\phi_v(x,n)+\phi_v(S^nx,m).$$
One similarly checks the cocyle relation for $m,n \in \Z$.
In addition, for any  fixed $n$,  $ \phi_v (x,n)$  only depends on $x_{[-n,-1]}$ ($n<0$) or on  $x_{[0,n-1]}$ ($n\geq0$): it is therefore locally constant, it is thus a strong cocycle.

Let us  check that the map  $\phi_v$  is the coboundary of the function $b$ defined  on $X_{\btheta}$ as:
\[
 b(x) = \langle v, \eta(x) \rangle, \mbox{ for all }  x  \in X_{\btheta},
\]
where $\eta$ is the almost everywhere one-to-one factor map $X_{\btheta} \rightarrow W_{\btheta}$ defined in \cref{sec:CP} (see \cref{lem:eta}).
Let $x \in X_{\btheta}$. We  first assume $ n\geq 0$.
One has 
$$
 b(S^nx)-b(x)  = \langle v, \eta(S^nx) \rangle- \langle v, \eta(w) \rangle=   \langle v, \sigma^n(m) \rangle- \langle v, m \rangle,
$$
with the map $\sigma$ being defined in \cref{sec:CP}.
Since $\sigma(m)=m+\pi_{\btheta}(e_{x_0})$, we deduce that  $\sigma^n(m)=m+\pi_{\btheta}(e_{x_0\dots x_{n-1}}),$ 
which yields

$$b(S^nx)-b(x)= \langle v, \pi_{\btheta}(e_{x_0}\dots e_{x_{n-1}}) \rangle.$$
Since $v$ belongs to $E_{\btheta}^\perp$, we obtain
$$b(S^nx)-b(x)= \langle v, e_{x_0\dots x_{n-1}} \rangle=\phi_v(x, n).$$
The case $n <0$ is  handled in  the same way.

The continuity of $b$ is a consequence of the continuity of $\eta$.

\end{proof}

\subsection{On the rational  rank of asymptotic negligibles}

It turns out that the   asymptotically negligible cochains $\phi_v$ from \cref{subsec:an2} are generators of  $H^1_{\tt an}(X_{\btheta},\mathbb R)$.

\begin{theorem}\cite[Theorem 1.3]{Kell.Sad.17}\label{dim-asymptot-neglig}
Let $\btheta \in  {\mathbb R}^{d+1}_+$ be an irrational  direction.
Let $(X_{\btheta},S)$ be the hypercubic billiard  subshift with direction $\btheta$.  Let $L_{\btheta}^\perp$ stand for the hyperplane with normal vector $\btheta$. The  real vector space of asymptotically negligible cochains $H^1_{\tt an}(X_{\btheta},\mathbb R)$  has  dimension $d$ and is generated by the  classes of the functions $\phi_v$, for $v \in L_{\btheta}^\perp$ (as defined in \cref{subsec:an2}).
\end{theorem}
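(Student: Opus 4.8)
By \cref{lem:phiv}, for each $v \in L_{\btheta}^\perp$ the cocycle $\phi_v$ is an asymptotically negligible strong cocycle, so the assignment $\Phi\colon v \mapsto [\phi_v]$ is a well-defined linear map from the $d$-dimensional space $L_{\btheta}^\perp$ into $H^1_{\tt an}(X_{\btheta},\mathbb R)$. The plan is to prove that $\Phi$ is a linear isomorphism onto $H^1_{\tt an}(X_{\btheta},\mathbb R)$: since $\dim L_{\btheta}^\perp = d$, this yields at once that the space has dimension $d$ and that the classes $[\phi_v]$ generate it. I would establish injectivity of $\Phi$ (the lower bound $\dim \geq d$) by a direct argument, and obtain surjectivity (the matching upper bound) from the cohomological computation of Kellendonk and Sadun.

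For injectivity, suppose $[\phi_v] = 0$, i.e.\ $\phi_v$ is the coboundary of a locally constant function $g$, so that its generator satisfies $\phi_v(\cdot,1) = g \circ S - g$. By \cref{lem:phiv} the same cocycle has transfer function $b_v(x) = \langle v, \eta(x) \rangle$, with $b_v \circ S - b_v = \phi_v(\cdot,1)$. Subtracting, $(b_v - g) \circ S = b_v - g$, so $b_v - g$ is a continuous $S$-invariant function (both $b_v$ and $g$ are continuous, $g$ being locally constant); by minimality of $(X_{\btheta},S)$ it is constant. Hence $b_v$ is itself locally constant, and therefore takes only finitely many values on the compact space $X_{\btheta}$. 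Since $\eta$ maps $X_{\btheta}$ onto $W_{\btheta}$ (its image is compact and contains the dense set of generic points, hence equals $W_{\btheta}$), the linear functional $\langle v, \cdot \rangle$ takes finitely many values on $W_{\btheta}$; as it is continuous and $W_{\btheta} = \pi_{\btheta}([0,1]^{d+1})$ has nonempty interior in $L_{\btheta}^\perp$, it must be constant on $W_{\btheta}$. Because $W_{\btheta}$ spans $L_{\btheta}^\perp$ and $v \in L_{\btheta}^\perp$, this forces $v = 0$. Thus $\Phi$ is injective and $\dim H^1_{\tt an}(X_{\btheta},\mathbb R) \geq d$.

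For the reverse inequality I would represent an arbitrary asymptotically negligible class, using \cref{cor-angh}, by a strong cocycle $\psi$ with continuous transfer function $h$, so that $\psi(x,n) = h(S^n x) - h(x)$ and the generator $f = h \circ S - h$ is locally constant. Via the measurable conjugacy $\eta$ of \cref{lem:eta} between $(X_{\btheta},S)$ and the minimal, uniquely ergodic system $(W_{\btheta},\sigma)$, which is measurably a toral translation (\cref{thm-rappel}), this reduces the surjectivity of $\Phi$ to the following assertion: any continuous $h$ on $X_{\btheta}$ whose coboundary is locally constant is, modulo locally constant functions and additive constants, of the form $\langle v, \eta(\cdot) \rangle$ for some $v \in L_{\btheta}^\perp$. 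Concretely, the locally constant datum $f$ descends to a step function on the cells of $W_{\btheta}$, and solving the coboundary equation over the translation $\sigma$ determines $h$ up to the piecewise-constant (locally constant) ambiguity; the residual ``internal displacement'' that survives is precisely a linear functional $\langle v, \cdot \rangle$ on the internal space $L_{\btheta}^\perp$.

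The main obstacle is exactly this last identification: showing that the continuous solutions of the coboundary equation for a codimension-one cut and project system, taken modulo strongly pattern-equivariant (locally constant) functions, are exhausted by the $d$ linear functionals on the internal space. This is the content of \cite[Theorem 1.3]{Kell.Sad.17}, and it rests on the tiling-cohomology description of $X_{\btheta}$ as fibering over the torus $\mathbb T^{d+1} = \mathbb R^{d+1}/\mathbb Z^{d+1}$, with the asymptotically negligible classes arising from the $d$ internal directions transverse to the flow direction $\btheta$. Granting this, $\Phi$ is surjective, and combined with the injectivity above it is an isomorphism, so $H^1_{\tt an}(X_{\btheta},\mathbb R)$ has dimension $d$ and is generated by the classes $[\phi_v]$, $v \in L_{\btheta}^\perp$.
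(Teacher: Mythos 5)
The paper does not actually prove this statement: it is imported wholesale from Kellendonk and Sadun, as the citation \cite[Theorem 1.3]{Kell.Sad.17} in the theorem header indicates, so there is no internal proof to compare your attempt against. Your proposal splits the claim into an injectivity half and a surjectivity half, and for the surjectivity half (the upper bound, i.e.\ that every asymptotically negligible class is of the form $[\phi_v]$) you ultimately ``grant'' exactly \cite[Theorem 1.3]{Kell.Sad.17}. As a standalone proof this is circular, since that is precisely the statement being proved; but it coincides with what the paper itself does, so on the hard half you and the paper take the same (citation) route. What you add beyond the paper is the injectivity half, and that argument is correct and self-contained: if $\phi_v$ is the coboundary of a locally constant $g$, then comparing with the transfer function $b_v=\langle v,\eta(\cdot)\rangle$ from \cref{lem:phiv} shows $b_v-g$ is continuous and $S$-invariant, hence constant by minimality (\cref{thm-rappel}), so $\langle v,\cdot\rangle$ takes finitely many values on $\eta(X_{\btheta})=W_{\btheta}$ (the image is compact and contains the dense set of generic points); since $W_{\btheta}$ is connected with nonempty interior in $L_{\btheta}^\perp$ and $v\in L_{\btheta}^\perp$, this forces $v=0$. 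This is in fact a sharper independence statement than the one the paper records internally: the related computation in \cref{thm-cohom-an} verifies linear independence of the cocycles $\phi_{v_i}$ only at the level of cochains (assuming a literal vanishing $\sum_i\lambda_i\phi_{v_i}+\lambda\mathbf{1}=0$), whereas your argument shows the classes are independent modulo coboundaries of locally constant functions, which is what the cohomological dimension count actually requires. So: correct and genuinely useful where it is self-contained, and identical to the paper (deferral to Kellendonk--Sadun) where it is not; just be aware that your write-up, taken as a whole, does not constitute an independent proof of the theorem.
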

We recall that  equivalence classes are taken up to coboundaries of locally  constant  functions.
We complete this statement   by providing  explicitly a  set of generators
of $H^1_{\tt an}(X_{\btheta},\mathbb R)$  below.  This will be used in the discussion of \cref{subsec:back}.

\begin{proposition}\label{thm-cohom-an}
Let ${\btheta} \in {\mathbb R}^{d+1}_+$ be an irrational vector and 
let $(X_{\btheta},S)$ be  the associated  hypercubic billiard  subshift. 

The  real  vector  subspace  of $H^1_{\tt strong}(X_{\btheta},\mathbb R)$ generated by the $\psi_a$ ($a \in \mathcal A$)  coincides with the 
 real   vector subspace generated by the $\phi_v$ ($v \in L_{\btheta}^\perp$) and $\mathbf 1$.  It has dimension $d+1$.
 
Moreover,  the  $d$-dimensional real vector space of asymptotically negligible cochains $\cH^1_{\tt an}(X_{\btheta}, {\mathbb R})$ is generated by  the classes of the  functions  ${\mathbf 1}_a - \mu[a] {\mathbf 1}$,  for $a  \in \mathcal A$.
\end{proposition}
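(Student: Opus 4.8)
The plan is to establish the two claims of Proposition~\ref{thm-cohom-an} separately, exploiting the explicit relation between the factor-counting cocycles $\psi_a$ and the geometric cocycles $\phi_v$. First I would observe that for a single letter $a$, the characteristic function ${\mathbf 1}_a$ coincides with $f_{e_a^*}$ in the sense that $\psi_a(x,n) = \langle e_a, e_{x_0\cdots x_{n-1}} \rangle$, where $e_a$ is the $a$-th coordinate vector in ${\mathbb R}^{d+1}$. Thus $\psi_a = \phi_{e_a}$ where here $\phi$ is the cocycle of Section~\ref{subsec:an2} but attached to the vector $e_a \in {\mathbb R}^{d+1}$ (not necessarily in $L_{\btheta}^\perp$). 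The key linear-algebra fact is that ${\mathbb R}^{d+1} = L_{\btheta}^\perp \oplus {\mathbb R}\btheta$, so each $e_a$ decomposes as $e_a = v_a + \lambda_a \btheta$ with $v_a = \pi_{\btheta}(e_a) \in L_{\btheta}^\perp$ and $\lambda_a = \langle e_a, \btheta\rangle/\|\btheta\|^2$. Since $\phi$ is linear in its vector argument, this gives $\psi_a = \phi_{v_a} + \lambda_a\, \phi_{\btheta}$. The point is then that $\phi_{\btheta}(x,n) = \langle \btheta, e_{x_0\cdots x_{n-1}}\rangle = \sum_a \theta_a\, \psi_a(x,n)$, and by construction of the billiard this length along the physical direction is (up to the fixed normalization) just proportional to $n = {\mathbf l}(x,n)$, so $\phi_{\btheta}$ is cohomologous to a scalar multiple of $\mathbf l$.

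With this decomposition in hand, the first assertion follows: the span of the $\psi_a$ lies inside the span of the $\phi_v$ ($v\in L_{\btheta}^\perp$) together with $\mathbf l$, and conversely. I would verify the reverse inclusion by noting that $\sum_a \psi_a = \phi_{\sum_a e_a}$, and that the sum $\sum_a e_a$ has a nonzero component along $\btheta$, so that $\mathbf l$ (equivalently $\phi_{\btheta}$) is recovered as a combination of the $\psi_a$; and that the individual $\phi_{v_a}$ are then obtained as $\psi_a$ minus the appropriate multiple of this. For the dimension count, I would use that the $\phi_v$ span a $d$-dimensional space by Theorem~\ref{dim-asymptot-neglig}, while $\mathbf l$ is not asymptotically negligible (it is explicitly noted to be non-negligible just before Proposition~\ref{cor-balanced-asympt}), so adjoining $\mathbf l$ to the $d$-dimensional asymptotically negligible subspace gives exactly dimension $d+1$; equivalently, the $d+1$ cocycles $\psi_a$ are linearly independent in $H^1_{\tt strong}$, which one checks because their frequencies $\mu[a]$ are the distinct positive reals from Theorem~\ref{thm-rappel}.

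For the second assertion, the natural candidates are $g_a := {\mathbf 1}_a - \mu[a]\mathbf 1$, whose associated cocycles are $\psi_a - \mu[a]\,\mathbf l$. By Proposition~\ref{cor-balanced-asympt} each such cocycle is asymptotically negligible exactly when $X_{\btheta}$ is balanced on the letter $a$, and balance on letters holds by Theorem~\ref{thm-rappel}(5); hence each $g_a$ is asymptotically negligible, placing $d+1$ classes inside the $d$-dimensional space $\cH^1_{\tt an}(X_{\btheta},{\mathbb R})$. The single relation among them is $\sum_a g_a = \sum_a {\mathbf 1}_a - (\sum_a \mu[a])\mathbf 1 = \mathbf 1 - \mathbf 1 = \mathbf 0$, using $\sum_a {\mathbf 1}_a = \mathbf 1$ (every position carries exactly one letter) and $\sum_a \mu[a] = 1$. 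Thus the $g_a$ satisfy exactly one linear relation, so they span a space of dimension $d+1-1 = d$, which by Theorem~\ref{dim-asymptot-neglig} must be all of $\cH^1_{\tt an}(X_{\btheta},{\mathbb R})$.

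The main obstacle I anticipate is bookkeeping the distinction between the two decorated cohomology groups $H^1$ (cocycles modulo coboundaries of locally constant functions) and $\cH^1$ (locally constant functions modulo coboundaries), together with keeping track of whether $v$ ranges over all of ${\mathbb R}^{d+1}$ or is restricted to $L_{\btheta}^\perp$: the cocycles $\phi_{e_a}$ are built from $e_a \notin L_{\btheta}^\perp$, so I must be careful that it is precisely the $\btheta$-component of $e_a$ that contributes the non-negligible $\mathbf l$ part, while the $L_{\btheta}^\perp$-component $v_a = \pi_{\btheta}(e_a)$ yields the asymptotically negligible piece via Lemma~\ref{lem:phiv}. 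Confirming that the quotient relation $\phi_{\btheta} \sim c\,\mathbf l$ holds at the level of cohomology classes (not merely that both are non-negligible) is the delicate step, and I would settle it by the direct computation $\sum_a \theta_a \psi_a(x,n) = \langle\btheta, e_{x_0\cdots x_{n-1}}\rangle$ combined with the fact that, along the billiard staircase, this quantity differs from a constant multiple of $n$ only by the bounded $\langle\btheta,\cdot\rangle$-defect of the internal-space window, which is exactly a coboundary of a continuous function.
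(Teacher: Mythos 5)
Your reduction rests on the claim that $\phi_{\btheta}$ is cohomologous to a scalar multiple of ${\mathbf l}$, and that claim is false in the relevant cohomology. You justify it by showing that $\phi_{\btheta}-c\,{\mathbf l}$ (with $c=\sum_a\theta_a\mu[a]$) is the coboundary of a \emph{continuous} function; but classes in $H^1_{\tt{strong}}(X_{\btheta},\mathbb R)$ are taken modulo coboundaries of \emph{locally constant} functions, and a cocycle that is the coboundary of a continuous function is exactly what the paper calls asymptotically negligible --- such cocycles are in general \emph{not} trivial in $H^1_{\tt{strong}}$. The nontriviality of these classes, i.e.\ $\dim H^1_{\tt{an}}(X_{\btheta},\mathbb R)=d>0$ (\cref{dim-asymptot-neglig}), is the engine of the whole paper: if every continuous coboundary were a locally constant coboundary, every factor would be balanced and \cref{theo:main} would fail. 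Your claim is in fact false, not merely unproven: using the paper's exact identity $\psi_a=\mu[a]({\mathbf l}-\phi_{v_a})$ (equation (\ref{eq1}), with the paper's dual vectors $v_a$), one computes $\phi_{\btheta}-c\,{\mathbf l}=-\sum_a\theta_a\mu[a]\,\phi_{v_a}$ exactly; the only linear relation among the classes $[\phi_{v_a}]$ is $\sum_a\mu[a][\phi_{v_a}]=0$ (it holds exactly, and any $d$ of the $v_a$ are independent by the paper's argument), so this class vanishes in $H^1_{\tt{strong}}$ only if all coordinates of $\btheta$ are equal, contradicting irrationality. Note also that you cannot close the loop from your own decomposition: substituting $\phi_{\btheta}=\sum_a\theta_a\psi_a$ into $\psi_a=\phi_{\pi_{\btheta}(e_a)}+\lambda_a\phi_{\btheta}$ only returns the trivial identity $\sum_a\theta_a\phi_{\pi_{\btheta}(e_a)}=0$, so the inclusion $\mathrm{span}\{\psi_a\}\subseteq\mathrm{span}\{\phi_v\,:\,v\in L_{\btheta}^\perp\}+\mathbb R\,{\mathbf l}$ is not established.

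The gap is repairable, but only by invoking the ingredient your argument tries to avoid: what you actually need is the weaker statement $[\phi_{\btheta}]\in\mathrm{span}\{[\phi_v]\,:\,v\in L_{\btheta}^\perp\}+\mathbb R\,[{\mathbf l}]$, and this follows from your boundedness observation (balance on letters plus Gottschalk--Hedlund) \emph{combined with} \cref{dim-asymptot-neglig}, which says that asymptotically negligible classes lie in the span of the $[\phi_v]$. The paper sidesteps cohomology entirely at this step by choosing different vectors: $v_a\in L_{\btheta}^\perp$ defined by $\langle\pi_{\btheta}(e_b),v_a\rangle=1$ for $b\neq a$, which yield the exact cocycle identity $\phi_{v_a}=-(\mu[a])^{-1}\psi_a+{\mathbf l}$, with no equivalence classes involved. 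Two further weak points. First, your independence argument for the $[\psi_a]$ (``their frequencies $\mu[a]$ are distinct'') is not a proof: distinct frequencies say nothing about linear independence modulo coboundaries; the paper kills the ${\mathbf l}$-coefficient by boundedness of the $\phi_{v_i}$ and then uses denseness of $\pi_{\btheta}(\mathbb Z^{d+1})$ in $L_{\btheta}^\perp$. Second, in the last part, exhibiting the relation $\sum_a g_a=0$ does not show it is the \emph{only} relation among the classes $[g_a]$ in $\cH^1_{\tt an}(X_{\btheta},\mathbb R)$; that requires the independence of the $d+1$ classes $[\psi_a]$ in $H^1_{\tt{strong}}$ (i.e.\ the first part), or the paper's route of identifying the cocycle of $g_a$ with $-\mu[a]\,\phi_{v_a}$ and using that the $v_a$ span $L_{\btheta}^\perp$ together with \cref{dim-asymptot-neglig}.
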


\begin{proof} 
 Note first  that the   characteristic  functions $ {\bf 1}_w$, for $w \in   \mathcal A^*$, are locally constant.
Their  classes   are   elements   of  $\cH^1_{\text{strong}}(X_{\btheta},\mathbb R)$.
Let $v \in  E_{\btheta}^\perp$. The cocycle $\phi_v$  defined in \cref{subsec:an2}
 is an asymptotic negligible cocyle by Lemma \ref{lem:phiv}.

$\bullet$
 We now prove that   the   (real) vector subspace of $\cH^1_{\tt{strong}}(X_{\btheta},\mathbb R)$  spanned by the   classes
of  characteristic  functions ${\bf 1}_a$, for the letters $a \in {\mathcal A}$, is equal to the subspace of $\cH^1_{\tt{strong}}(X_{\btheta},\mathbb R)$  spanned by the   classes
of $\psi_v$ ($v \in L_{\btheta}^\perp$) and ${\mathbf 1}$.

For any  given  letter  $a$  in $ \mathcal A$,
we define   $v_a$ in $L_{\btheta}^\perp$  as  the  vector uniquely defined by   the following system of $d$ equations
  \[
  \langle \pi_{\btheta}(e_b), v_a \rangle = 1, \quad b \in \mathcal A \setminus \{a\}.
 \]
 This system of  $d$ equations has indeed a unique solution since  $L_{\btheta}^\perp$  has dimension $d$ together with the
   irrationality of $\btheta$.
Indeed the  additive group generated by the  vector $f_i$, i.e.,  $\langle \mathbb Z  f_i \,:\, i=1 ,\ldots, d+1 \rangle$ is dense in $L_{\btheta}^\perp$. It implies that any subfamily of $d$ vectors among the set of vectors  $\{ f_i \,:\, i=1 ,\ldots, d+1 \}$ has to be independent,
and  the system above has a unique solution.
 We  then  set $\alpha_a
  =  \langle \pi_{\btheta}(e_a), v_a \rangle . $ Since $v_a \in L_{\btheta}^\perp$,  note that 
one has   $$\langle e_b, v_a \rangle = 1 \mbox{  for } b \neq a, \mbox{  and  }\langle e_a, v_a \rangle =\alpha_a. $$

Let us  first  prove that $ \alpha_a <0$ for all $a$.  One has, for all $n \in {\mathbb N}$,  \begin{align} \label{eq:psi}
  \phi_{v_a}(x, n) & = \sum_{i=0}^{n-1} \langle v_a, e_{x_i} \rangle  = \alpha_a |x_0 \cdots x_{n-1}|_a + 1 \bigl( n-|x_0 \cdots x_{n-1}|_a \bigr)  \notag \\
      & = (\alpha_{a} - 1) \psi_a (x, n) + {\mathbf l} (x, n).
 \end{align}

 Now for all $n$, $\pi_{\btheta}(e_{x_1 \cdots x_n}) \in W_{\btheta}$ (see the notation of  \cref{subsec:hypercube}), and in particular, it  is bounded (with respect to $n$).
 Therefore, $\phi_{v_a}(x,n)$ is bounded uniformly in $n$.
 It implies (by unique ergodicity) that 
 $(\alpha_a - 1)\mu[a]=-1$, which  implies that  $\alpha_a-1= -\frac{1}{\mu[a]}<-1$, and thus  $ \alpha_a <0 .$ 
  
We now can deduce   that any set of $d$ vectors among the $v_a, a\in\mathcal A$, is a set of $d$ linearly independent vectors.
 Indeed, assume, up to permutation and  by contradiction, that   there is a linear combination of the $d-1$ first vectors such that
 $$\sum_{ 1 \leq i \leq d-1}  \lambda_i v_i=v_{d}.$$
 By construction,  one has $\langle v_i, e_{d+1} \rangle = 1$ for $  1\leq i \leq d$. Therefore, by considering the scalar product with $e_{d+1}$, one obtains $\sum_{ 1 \leq i \leq d-1} \lambda_i=1$.
 Besides, $\langle v_i, e_{d} \rangle = 1$ for $i \leq d-1$ and  $ \alpha_{d}=\langle v_{d}, e_{d} \rangle  < 0$,   as proved in the previous paragraph.   Therefore,
  by considering the scalar product with $e_{d}$, one obtains
 $\sum_{ 1 \leq i \leq d-1} \lambda_i = \alpha_{d } <1  $, which  provides the  desired  contradiction.
 
We thus deduce that   the  $d$-dimensional space $ L_{\btheta}^\perp $ is generated by $v_1, \cdots ,v_{d}$.
 Since the map $v \mapsto \phi_v$ is a linear map, this implies that any $\phi_v$, with  $v \in L_{\btheta}^\perp$, is a linear combination of  the $\phi_{v_i}$'s for $i=1, \cdots, d$.
Hence,  we have proved that the subspace of $H^1_{\tt{strong}}(X_{\btheta},\mathbb R)$ spanned by the   classes
of $\phi_v$ ($v \in L_{\btheta}^\perp$) is generated by  $\phi_{v_1}, \cdots ,\phi_{v_{d-1}}$. 
 Moreover, we deduce  from  (\ref{eq:psi}) that, for all $a \in {\mathcal A}$,
 \begin{equation}\label{eq1}
 \phi_{v_a} =- (\mu[a])^{-1} \psi_a +  {\mathbf l}.
 \end{equation}
This  implies that  the real  vector space  generated by  the  $\psi_a$'s  ($a \in \{1, \cdots, d+1\}$)
coincides  with the real vector  space generated by the $\phi_v$'s,  for $v \in L_{\btheta}^{\perp}$, and  $\mathbf 1$.

\bigskip

$\bullet$  It remains  now to prove  that  the rank 
of   the subspace of $H^1_{\tt{strong}}(X_{\btheta},\mathbb R)$ spanned by the   classes
of $\psi_v$ ($v \in L_{\btheta}^\perp$) and ${\mathbf 1}$ is equal to $d+1$.
 Assume that $ \sum_{ 1 \leq i \leq d+1}  \lambda_i  \phi_{v_i} +\lambda  {\mathbf 1}=0$. 
 Let $x \in X_{\btheta}$. For all  nonnegative integer $n$, one has 
$$\langle   \sum_{ 1 \leq i \leq d-1}  \lambda_i  {v_i}  ,  e_{x_0 \cdots e_{x_n}}  \rangle + n \lambda= 0.$$ Since  $\langle   \sum_{ 1 \leq i \leq d-1}  \lambda_i  {v_i}  ,  e_{x_0 \cdots e_{x_n}}  \rangle$ is bounded   with respect to $n$, this implies that $\lambda=0$.
Since the vectors $v_i$ belong to $L_{\btheta}^ \perp$,  one has  for every $x$ and $n$
$$\langle   \sum_{ 1 \leq i \leq d-1}  \lambda_i  {v_i}  ,  e_{x_0 \cdots e_{x_n}}  \rangle =\langle   \sum_{ 1 \leq i \leq d-1}  \lambda_i  {v_i}  ,  \pi _{\btheta} (e_{x_0 \cdots e_{x_n}})  \rangle = 0.$$ By denseness of $\pi _{\btheta}( {\mathbb Z^{d+1}})$, one  deduces that 
 $ \sum_  { 1 \leq i \leq d+1}  \lambda_i  {v_i}=0$.  
By linear independence of  the  vectors  $v_i$,  $i=1, \cdots, d$,   we conclude that  $\lambda_i=0$ for all $i$.

$\bullet$  It remains to   prove that $\cH^1_{\tt an}(X_{\btheta},\mathbb R)$ is  generated  by the  classes of 
 functions 
 ${\mathbf 1}_a - \mu[a]{\mathbf 1}$,  $a \in {\mathcal A}$.
Note first that  $\cH^1_{\tt an}(X_{\btheta},\mathbb R)$ contains   the classes of the  functions 
${\mathbf 1}_a - \mu[a]{\mathbf 1}$, for  $a \in {\mathcal A}$. 
Indeed, by  \cref{thm-rappel}, one has balance on letters, and thus  ${\mathbf 1}_a  -  \mu[a]{\mathbf 1}$
is  an  asymptotically negligible function  by  \cref{cor-balanced-asympt}.
We conclude  by    a dimension argument  by  \cref{dim-asymptot-neglig}, since the dimension  of  $\cH^1_{\tt an}(X_{\btheta},\mathbb R)$is equal to $d$.

   \end{proof}

 \begin{remark} \label{rem:bal}
We deduce from Proposition \ref{thm-cohom-an} that if  $X_{\btheta} $ is balanced on $w$, then $\psi_w$ is cohomologous to a   linear   combination of the cocyles  $\psi_a$ and $\mathbf 1$.
We  also can deduce it directly.
Indeed, let $w$  be such that   ${\mathbf 1}_w - \mu[w]$ is  a coboundary of a continuous function.
Then,   $\mu[w]$ is an additive  eigenvalue, that is,
 $\mu[w]$ belongs to the additive  group  generated  by the $\mu[a]$, by Proposition \ref{prop-vp}. 
 Let $\lambda_a$,  for $ a \in  {\mathcal A}$,  be integers  such  that
 $\mu[w]= \sum \lambda_a \mu[a]$.
 One has
 $$   {\mathbf 1}_w - \sum  \lambda_a  {\mathbf 1}_a={\mathbf 1}_w - \mu[w] -  \sum \lambda_a ({\mathbf 1}_a- \mu[a]).$$
 Hence 
 $ {\mathbf 1}_w - \sum  \lambda_a  {\mathbf 1}_a$ is  a   coboundary of a  continuous  function,   as  a linear combination  of   coboundaries.
 Since it  takes integer values,  it is a coboundary of  a locally constant  function.
 Indeed  an  integer valued continuous function that is a real coboundary is a coboundary of a  locally constant  function  (\cite[Proposition 4.1]{Ormes:00}).
 One thus has $   {\mathbf 1}_w - \sum  \lambda_a  {\mathbf 1}_a\equiv 0$.
 This implies that   the class of  ${\mathbf 1}_w - \mu[w] $  coincides with the class  of $   \sum \lambda_a ({\mathbf 1}_a- \mu[a])$. It thus  belongs to  the 
  ${\mathbb R}$-vector space generated  by the 
 functions 
 ${\mathbf 1}_a - \mu[a]$, for $a $  in ${\mathcal A}$.

\end{remark}



\section{Proofs of main results}
We now have collected all the required material in order to prove  our main results.
\subsection{Proof of   \cref{theo:main}} \label{subsec:proofmain}

 By Proposition \ref{prop-jul}, $H^1_{\tt{strong}}(X_{\btheta},\mathbb R)$ is not finitely  generated as a ${\mathbb R}$-vector space if $d\geq 2$.  But  $H^1_{\tt{an}} (X_{\btheta},\mathbb R)$  is finitely  generated as a ${\mathbb R}$-vector space  by Theorem \ref{dim-asymptot-neglig}. Thus these  two sets are different.  Hence there exists 
a finite word $w$ such  that  the class  of $\psi_w  $  does not belong to   $H^1_{\tt{an}} (X_{\btheta},\mathbb R)$  (modulo  a coboundary of a  locally constant function). By Corollary \ref{cor-balanced-asympt} we conclude that the word  $w$ is not balanced  in $X_{\btheta}$.

Observe that the result does not  hold in the case $d=1$, which corresponds to the case of  Sturmian words which are balanced  on all their factors. In this case,  the group $H^1_{\tt{an}}(X_{\btheta},\mathbb R)$ has rank  $1$ over  ${\mathbb Q}$. More precisely we have $H^1_{\tt{an}}(X_{\btheta},\mathbb R)= \btheta {\mathbb Q}$, see \cite{For.Hunt.Kellen.02}.

\subsection{Proof of  \cref{prop:long2}} \label{subsec:proof2}

We follow the notation from \cref{subsec:cubic}.
We consider an  irrational   direction $\btheta=(1,\theta_1,\theta_2)\in \mathbb R^3_+$ and  the translation defined  on $\T^2$ by $\balpha=(\frac{\theta_1}{1+\theta_1+\theta_2},\frac{\theta_2}{1+\theta_1+\theta_2})$ that is  measurably  isomorphic  to the exchange of pieces   defined with respect to  the  partition  of $W_{\btheta}= \pi_{\btheta} ([0,1]^3)$ into  the three rhombi $W_{\btheta}^{(i)}$ for $ i=1,2,3$.  
We also consider the partition $\mathcal P_{\btheta}= \{W_{\btheta}^{(i)}\,:\, i=1,2,3\}$ of $W_{\btheta}$  (see  \cref{fig-tore}), and the partition $\mathcal P _{\btheta} \vee E^{-1}_{\btheta}\mathcal P_{\btheta}$, illustrated in \cref{fig-dim2}.  Each element of this partition corresponds to a factor of length two in the language of the billard shift $(X_{\btheta},S)$, and there are 7  factors of length $2$ (as recalled  in Theorem \ref{thm-rappel}; see also  \cite{Bary.95, Bed:09}).
We now apply the results  from \cite{GL:15} recalled in \cref{subsec:BRS}. Note that  measurable isomorphism  preserves bounded remainder sets for minimal  toral translations. 

We assume   $\theta_1>\theta_2>0$ and $\theta_1>1$.  The other cases are handled analogously. Note that the letter $2$ is thus the more frequent one, with 
 $\mu[1]=\frac{1}{1+\theta_1+\theta_2}$, $\mu[2]=\frac{\theta_1}{1+\theta_1+\theta_2}$ and $\mu[3]=\frac{\theta_2}{1+\theta_1+\theta_2}$. 
By  \cref{cor:GL}, the polygons which do not have a center of symmetry  are not bounded remainder sets, they thus do not correspond to balanced words. 
There are 6  of them. 

It remains to consider  the case of  the parallelogram with vertices
 $KBCD$  which corresponds to the factor $22$ (see \cref{fig-dim2}). 
Here we recall that  the lattice $\Lambda_{\btheta}$ is generated by the vectors  $f_3-f_1, f_2-f_1$ and that  the translation vector  is equal to $f_1$ modulo  the lattice $\Lambda_{\btheta}$, with 
  $$f_1=\frac{\theta_1}{1+\theta_1+\theta_2}(f_1-f_2)+\frac{\theta_2}{1+\theta_1+\theta_2}(f_1-f_3).$$

An easy computation gives the coordinates of the vertices   of   $KBCD$ in the hyperplane $E_{\btheta}^\perp$ with the help of Figure \ref{fig-dim2}:
$$\overrightarrow{OA}=f_1, \overrightarrow{OB}=f_1+\frac{\theta_2}{\theta_1} f_3,  \overrightarrow{OC}=f_1+f_3,  \overrightarrow{OD}=f_3+\frac{1}{\theta_1} f_1, \overrightarrow{ OK}=-f_2.  $$ 
Indeed,  let $\lambda>0$ be such that $\overrightarrow{KB}=\lambda f_1$ and $\mu>0$ such that $ \overrightarrow{AB}=\mu f_3$.
One has $$-f_2+\lambda f_1-\mu f_3-f_1=\vec {0},$$ by considering the cycle  going through the vertices $OKBA$.
By  $f_1+\theta_1f_2+\theta_2 f_3=0$   together with the  irrationality of $\btheta$, we deduce
that $\lambda=1-\frac{1}{\theta_1} $ and $\mu=\frac{\theta_2}{\theta_1}$.

Let us  apply Corollary \ref{cor:GL} to the parallelogram with vertices
 $KBCD$.  It is clear that $C, K$ are in $\mathbb Z f_1+\Lambda_{\btheta}$. 
Let us  prove that the vertices $B, D$ are not  in $\mathbb Z f_1+\Lambda_{\btheta}$. Indeed, if  $B$ belongs to $\mathbb Z f_1+\Lambda_{\theta}$, then there exist some integers $n,m,p$ such that 
$$f_1+\frac{\theta_2}{\theta_1}f_3=n f_1+m(f_1-f_3)+p(f_1-f_2).
 $$
 This gives
$(n+p-1+m)f_1-pf_2-(m+\frac{\theta_2}{\theta_1})f_3=0$. We deduce from the irrationality of $\btheta$  together with  the linear relation  $f_1+\theta_1 f_2+\theta_2 f_3=0$  that
$$n+p+m-1=-\frac{p}{\theta_1}=-\frac{m+\frac{\theta_2}{\theta_1}}{\theta_2},$$  which contradicts the irrationality of $\btheta$. A similar proof holds for the  point $D$.

 Consider  now the second condition  of Corollary \ref{cor:GL} for the pair of parallel edges   of  the parallelogram with vertices
 $KBCD$ provided  by  $\overrightarrow{KB }$ and $\overrightarrow{CD}$.
The vector between the midpoints of these two edges  cannot  belong to   the lattice $\mathbb Zf_1+\Lambda_{\btheta}$  since  the vectors $\overrightarrow{KB},\overrightarrow{CD}$  do not belong to this lattice, as proved above.  Also, the two edges under consideration  do  belong   to the lattice. Hence the second condition of  Corollary \ref{cor:GL} is violated, which implies that  $KBCD$ is not a bounded remainder set.

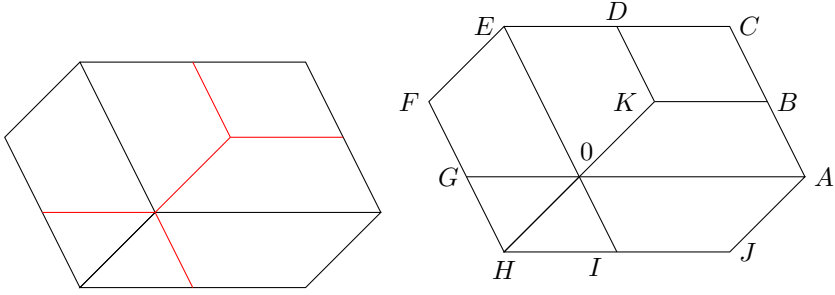
\begin{figure}
\begin{tikzpicture}

\draw (0,0)--(3,0)--++(1,1)--++(-3,0)--cycle;
\draw (0,0)--(-1,2)--++(1,1)--++(1,-2)--cycle;
\draw (0,3)--++(3,0)--(4,1);

\draw[red] (1,1)--(2,2);
\draw[red] (2,2)--(3.5,2);
\draw[red] (2,2)--(1.5,3);
\draw[red] (1,1)--(1.5,0);
\draw[red] (1,1)--(-0.5,1);
\end{tikzpicture}
\begin{tikzpicture}
\draw (3,0) node[right]{$J$};
\draw (1.1,1.1) node[right,above]{$0$};
\draw (-1,2) node[left]{$F$};
\draw (0,0) node[right,below]{$H$};
\draw (0,3) node[left]{$E$};
\draw (3,3) node[right]{$C$};
\draw (4,1) node[right]{$A$};
\draw (1.9,2) node[left]{$K$};
\draw (-1,1) node[right]{$G$};
\draw (1,-0.2) node[right]{$I$};
\draw (3.5,2) node[right]{$B$};
\draw (1.5,3.2) node{$D$};

\draw (0,0)--(3,0)--++(1,1)--++(-3,0)--cycle;
\draw (0,0)--(-1,2)--++(1,1)--++(1,-2)--cycle;
\draw (0,3)--++(3,0)--(4,1);

\draw (1,1)--(2,2);
\draw (2,2)--(3.5,2);
\draw (2,2)--(1.5,3);
\draw (1,1)--(1.5,0);
\draw (1,1)--(-0.5,1);
\end{tikzpicture}
\caption{The partition${\mathcal P}_{\btheta}\vee E_{\btheta}^{-1} {\mathcal P}_{\btheta}$ associated with two-letter  factors  of cubic billiard 
 words. There are $7$ polygons, each corresponding to a factor  of length  2 of the language.}\label{fig-dim2}
\end{figure}

We conclude  that  no  factor of length larger than or equal to $2$  is balanced,    by recalling that   balance on factors of length $n + 1$ implies   balance on factors of length $n$, 
 by \cite[Lemma 23]{Adam:03}.

\subsection{Back to  eigenvalues}\label{subsec:back}

 By Proposition \ref{prop-vp},  if a factor  $w$ is balanced, then $\mu[w]$ is an additive  eigenvalue.
 The aim of this section is to highlight cases where   the converse  does not hold. This explains  the need   of the  geometric statement of 
 Corollary  \ref{cor:GL} in the previous proof via the   close examination of the geometry
 of cells associated with two-letter factors  when applying 
 the geometric statement of 
 Corollary  \ref{cor:GL} in the previous proof.  Indeed, 
there exist   minimal  billiard subshifts $(X_{\btheta},S)$  in $\mathbb R^3$  that admit a   factor  $w$ in the language  ${\mathcal L}(X_{\btheta})$ such that the
subshift  $X_{\btheta}$ is not  balanced on  $w$, with
yet  $\mu[w]$  being an  additive eigenvalue of the subshift. According to \cref{thm-cohom-an},  this shows that even  if
$\mu[w]$ belongs to  the   additive group $\langle \mu[i] \,:\, i=1, \ldots, d+1\rangle$,
then  ${\mathbf 1}_w - \mu[w] {\mathbf 1}$  is not a linear combination of  the
 ${\mathbf 1}_i - \mu[i] {\mathbf 1}$, $i=1, \ldots, d+1$.

We assume   $\theta_1>\theta_2$, $\theta_1>1$, as previously.
We  consider the partition  $\mathcal P_{\btheta} \vee T^{-1}\mathcal P_{\btheta} $ discussed in \cref{subsec:proof2}. Each element of this partition corresponds to a word of length two in the language and  the frequency of the word is proportional to the measure  of the  corresponding element of the partition. One checks  more precisely that
$$
\begin{cases}
\mu[1]=\frac{1}{1+\theta_1+\theta_2},\\
\mu[2]=\frac{\theta_1}{1+\theta_1+\theta_2},\\
\mu[3]=\frac{\theta_2}{1+\theta_1+\theta_2},\\
\mu[31]=\mu[13]= \frac{\theta_2}{2\theta_1(1+\theta_1+\theta_2)}=\frac{1}{2\theta_1}\mu[3], \\
\mu [21]= \mu[12]= \frac{2\theta_1-\theta_2}{2\theta_1(1+\theta_1+\theta_2)}= \frac{(2\theta_1-1)}{2\theta_1}\mu[1],\\
\mu[23]=\mu[32]=\frac{v(2\theta_1-1)}{2\theta_1(1+\theta_1+\theta_2)}= \frac{(2\theta_1-1)}{2\theta_1}\mu[3]\\
 \mu[22]
 =\frac{ (\theta_1-1)(\theta_1-\theta_2)}{\theta_1(1+\theta_1+\theta_2)} 
 =\frac{(\theta_1-1)(\theta_1-\theta_2)}{\theta_1}\mu[1].
 \end{cases}
$$

The measures of factors of length $2$ belong to $\frac{1}{2\theta_1}(\mathbb Z \mu[1]+   \mathbb Z \mu[2]+   \mathbb Z \mu[3])$, whereas   the   group of additive eigenvalues  is $\mathbb Z \mu[1]+   \mathbb Z \mu[2]+   \mathbb Z \mu[3] $.
Let us assume that $ \frac{1}{2\theta_1} \in  \mathbb Z+ \theta_1  \mathbb Z+ \theta_2  \mathbb Z$.
The  element of the two-letter partition  $\mathcal P_{\btheta} \vee T^{-1}\mathcal P_{\btheta} $ associated with the factor   $31$ is not a bounded remainder set (since being a triangle), even if  its measure  belongs to the 
group of additive eigenvalues.

\bibliographystyle{alpha}
\nocite{*}
\bibliography{biblio-eqbillard}

\newpage

\end{document}